\def\figurename{Figure} 
\renewcommand{\fnum@figure}[1]{\figurename~\thefigure.}
\def\tablename{Table} 
\renewcommand{\fnum@table}[1]{\tablename~\thetable.}
\newtheorem{theorem}{Theorem}[section]
\newtheorem{lemma}[theorem]{Lemma}
\newtheorem{corollary}[theorem]{Corollary}
\newtheorem{proposition}[theorem]{Proposition}
\theoremstyle{definition}
\newtheorem{definition}[theorem]{Definition}
\newtheorem{definitions}[theorem]{Definitions}
\newtheorem{example}[theorem]{Example}
\newtheorem{remark}[theorem]{Remark}
\theoremstyle{remark}
\numberwithin{equation}{section}
\begin{document}

\title{Constructions and generalised derivations of multiplicative $n$-BiHom-Lie color algebras}
\author{IBRAHIMA BAKAYOKO \thanks{D\'epartement de Math\'ematiques,
Universit\'e de N'Z\'er\'ekor\'e,
BP 50 N'Z\'er\'ekor\'e, Guin\'ee. Email: ibrahimabakayoko27@gmail.com}\and ISMAIL LARAIEDH\thanks{Departement of Mathematics, Faculty of Sciences, Sfax University, BP 1171, 
3000 Sfax, Tunisia. Email:
Ismail.laraiedh@gmail.com and Departement of Mathematics, College of Sciences and Humanities - Kowaiyia, Shaqra University,
Kingdom of Saudi Arabia. E.mail:
ismail.laraiedh@su.edu.sa}}

\date{}
\maketitle

\begin{abstract}
The aim of this paper is introduce and give some constructions results and examples of $n$-BiHom-Lie color algebras. Next, we introduce the definition of BiHom-modules over $n$-BiHom-Lie color algebras and we provide some properties. Moreover we investigate generalized derivations of $n$-BiHom-Lie color algebras and their BiHom-subalgebras.

\end{abstract}
%
%
%
%
%


%
%

\vspace{.3in}


\vspace{.3in}
\vspace{.2in}

\renewcommand\headrulewidth{0.5pt}

\section{Introduction}


Algebras with $n$-ary compositions play important roles in Lie and Jordan theories, geometry, analysis, and physics.  For instance, Jordan triple systems \cite{jacobson1,lister,neher} give rise to $3$-graded Lie algebras through the Kantor-Koecher-Tits construction \cite{kantor1,koecher,tits}, from which most simple Lie algebras can be obtained.  Jordan triple systems also give rise to Lie triple systems through the Meyberg construction \cite{meyberg,meyberg2}.  On the other hand, Lie triple systems give rise to $\mathbb{Z}/2\mathbb{Z}$-graded Lie algebras \cite{jacobson1,lister}, which are exactly the kind of Lie algebras associated to symmetric spaces.  In geometry and analysis, various types of Jordan triple systems are used in the classifications of different classes of symmetric spaces \cite{bertram,chu,kaup1,kaup2,loos1,loos2}.

The n-Lie algebras found their applications in many fields of Mathematics and Physics. Ternary Lie algebras appeared first in Nambu generalization of Hamiltonian mechanics \cite{Nambu:GenHD}
using ternary bracket generalization of Poisson algebras.  Nambu mechanics \cite{Nambu:GenHD} involves an $n$-ary product that satisfies the $n$-ary Nambu identity, which is an $n$-ary generalization of the Jacobi identity.  Bagger-Lambert algebras \cite{bl} are ternary Nambu algebras with some extra structures, and they appear in the study of string theory and $M$-branes.  Ternary algebras are used in \cite{gun,gh,gh2} to construct superconformal algebras.
Deformations of $n$-Lie algebras have been
studied from several aspects. See  \cite{Azc?rraga,Figueroa}
for more details. In particular, a notion of a Nijenhuis
operator on a $3$-Lie algebra was introduced in  \cite{Zhang}
in the study of the $1$-order deformations of a $3$-Lie algebra. But there are some quite strong conditions in this
definition of a Nijenhuis operator. In the case of Lie algebras, one could obtain fruitful results by considering
one-parameter infinitesimal deformations, i.e. $1$-order deformations. However, for $n$-Lie algebras, we believe that
one should consider $(n-1)$-order deformations to obtain
similar results. In \cite{Figueroa}, for $3$-Lie algebras, the author had already considered $2$-order deformations. For the case of Hom-Lie superalgebras, the authors in \cite{Liu&chen&Ma} give the notion of Hom-Nijenhuis operator.

The origin of Hom-structures can be found in the physics literature around 1900, appearing
in the study of quasideformations of Lie algebras of vector fields. n-ary Hom-type generalization of n-ary algebras
were introduced in \cite{H L S}. Derivations and generalized derivations of many varieties of
algebras and Hom-algebras were investigated in \cite{A M A, K2, L L, K N, Z K, Z C M, Z Z}. Then, hom-Lie algebras were generalized to hom-Lie superalgebras by Ammar
and Makhlouf \cite{F Ab, F Ab N}. Hom-algebra structures of a given type include their classical counterparts and open more possibilities for deformations, Hom-algebra extensions of cohomological structures and representations, formal deformations of Hom-associative and Hom-Lie algebras,
Hom-Lie admissible Hom-coalgebras, Hom-coalgebras, Hom-Hopf algebras \cite{AmmarEjbehiMakhlouf:homdeformation,BenMakh:Hombiliform,LarssonSilvJA2005:QuasiHomLieCentExt2cocyid,MakhSil:HomHopf,MakhSilv:HomAlgHomCoalg,MakhSilv:HomDeform,Sheng:homrep,Yau:HomHom,Yau:HomEnv}.

A generalization of this approach led the authors of \cite{GRAZIANI} to introduce BiHom-algebras, which are algebras where the identities defining the structure are twisted by two homomorphisms $\alpha$ and $\beta$.
This class of algebras can be viewed as an extension of the class of Hom-algebras since, when the two linear maps of a BiHom-algebra are the same, it reduces to a Hom-algebra. These
algebraic structures include BiHom-associative algebras, BiHom-Lie algebras and BiHom-bialgebras. More applications of BiHom-algebras, BiHom-Lie superalgebras and BiHom-Novikov algebras can be
found in \cite{S S, Guo1, chtioui1,chtioui2, ismail, Li C, Liu2}. BiHom-type generalizations of n-ary Nambu-Lie algebras, called n-ary BiHom-Nambu-
Lie algebras, were introduced by Kitouni, Makhlouf, and Silvestrov in [12]. Each n-ary
BiHom-Nambu-Lie algebra has (n - 1)-linear twisting maps, which appear in a twisted
generalization of the n-ary Nambu identity called the n-ary BiHom-Nambu identity.

In 1979, Scheunert investigated the Lie color algebras from a purely mathematical
point of view and obtained generalizations of the PBW and Ado theorems( \cite{sch1}). The
cohomology theory for Lie color algebras was introduced in \cite{sch2}. Moreover, various properties were studied in color setting, can be
found in \cite{elka, elkadri, FA, Bak}.

The aim of this paper is to extend the work done By I. Bakayoko and S Silvestrov in \cite{Bakayoko} in BiHom-algebras setting. The paper is organized as follows. In Section 2, summarizes basic notions and notations related to $n$-BiHom-Lie color algebras which will be used in
next sections. In Section 3, we provide some useful methods for construction of n-BiHom-Lie
color algebras. In Section 4, we consider BiHom-modules over $n$-BiHom-Lie color algebras and give some results. In Section 5, we introduce and study proper-
ties of derivations, $(\alpha^{k},\beta^{r})$-derivations and generalized derivations of $n$-BiHom-Lie color algebras
and their Bihom-subalgebras.

Throughout this paper, all graded linear spaces are assumed to be over a field $\mathbb{K}$ of
characteristic different from $2$.

\section{Preliminaries}
\label{BakayokoSilvestrov:sec:preliminaries}
In the following we summarize definitions and proprieties of $n$-BiHom-Lie color algebra.\\

Let $\Gamma$ be an abelian group. A vector space $V$ is said to be $\Gamma$-graded, if there is a family
$(V_{\gamma})_{\gamma\in \Gamma}$ of vector subspace of $V$ such that $$V=\bigoplus_{\gamma\in \Gamma}V_{\gamma}.$$
An element $x \in V$ is said to be homogeneous of  degree $\gamma \in \Gamma$ if $x \in V_{\gamma}, \gamma\in \Gamma$, and in this case, $\gamma$ is called the degree of $x$. As usual, we denote by $\overline{x}$ the degree of an element $x \in V$. Thus each homogeneous
element $x \in V$ determines a unique group of element  $\overline{x} \in \Gamma$ by $x \in V_{\overline{x}}$.

Let $V=\bigoplus_{\gamma\in \Gamma}V_{\gamma}$ and $V^{'}=\bigoplus_{\gamma\in \Gamma}V^{'}_{\gamma}$ be two $\Gamma$-graded vector spaces. A linear mapping $f: V \longrightarrow V^{'}$  is said to be homogeneous of  degree $\upsilon \in \Gamma$ if
  $f(V_{\gamma})\subseteq V^{'}_{\gamma+\upsilon}, ~~ \forall \gamma \in \Gamma.$
If in addition $f$ is  homogeneous of degree zero, i.e. $f(V_{\gamma})\subseteq V^{'}_{\gamma}$ holds for any $\gamma \in \Gamma$, then $f$ is said to be even.

An algebra $A$ is said to be $\Gamma$-graded if its underlying vector space is $\Gamma$-graded, i.e. $A=\bigoplus_{\gamma\in \Gamma}A_{\gamma}$, and if, furthermore $A_{\gamma}A_{\gamma'}\subseteq A_{\gamma+\gamma'}$, for all $\gamma, \gamma'\in \Gamma$. It is easy to see
that if $A$ has a unit element $e$, it follows that $e \in A_{0}$. A subalgebra of $A$ is said to be graded if it is graded as a subspace of $A$.\\
\begin{definition} Let $\mathbb{K}$ be a field and  $\Gamma$ be an abelian group. A map $\varepsilon:\Gamma\times\Gamma\rightarrow \mathbb{K^{\ast}}$ is called a skewsymmetric \textit{bicharacter} on ${\Gamma}$ if the following identities hold, for all $a,b,c$ in $\Gamma$
\begin{enumerate}
\item~~$\varepsilon(a,b)~\varepsilon(b,a)=1,$
\item~~$\varepsilon(a,b+c)=\varepsilon(a,b)~\varepsilon(a,c),$
\item~~$\varepsilon(a+b,c)=\varepsilon(a,c)~\varepsilon(b,c).$
\end{enumerate}
\end{definition}
 The definition above implies, in particular, the following relations
$$\varepsilon(a,0)=\varepsilon(0,a)=1,\ \varepsilon(a,a)=\pm1, \  \textrm{for\ all}\  a \in \Gamma.$$
If $x$ and $x'$ are two homogeneous elements of degree $\gamma$ and $\gamma'$ respectively and $\varepsilon$ is a skewsymmetric bicharacter, then we shorten the notation by writing $\varepsilon(x,x')$ instead of $\varepsilon(\gamma,\gamma')$.

\begin{example} Some standard examples of skew-symmetric bicharacters are:
 \begin{enumerate}
\item [1)] $\Gamma=\mathbb{Z}_2,\quad \varepsilon(i, j)=(-1)^{ij}$,
\item [2)] $\Gamma=\mathbb{Z}_2^n=\{(\alpha_1, \dots, \alpha_n)|~ \alpha_i\in\mathbb{Z}_2 \}, ~~
\varepsilon((\alpha_1, \dots, \alpha_n), (\beta_1, \dots, \beta_n)):= (-1)^{\alpha_1\beta_1+\dots+\alpha_n\beta_n}.$
\item [3)] $\Gamma=\mathbb{Z}_2\times\mathbb{Z}_2,\quad \varepsilon((i_1, i_2), (j_1, j_2))=(-1)^{i_1j_2-i_2j_1}$,
\item [4)] $\Gamma=\mathbb{Z}\times\mathbb{Z} ,\quad \varepsilon((i_1, i_2), (j_1, j_2))=(-1)^{(i_1+i_2)(j_1+j_2)}$,
\item [5)] $\Gamma=\{-1, +1\} , \quad\varepsilon(i, j)=(-1)^{(i-1)(j-1)/{4}}$.
\end{enumerate}
\end{example}
\begin{definition}
An $ n $-Lie color algebra is a triple $(L,[\cdot,\dots,\cdot],\varepsilon)$ consisting of a $\Gamma$-graded vector space $L$, an even $ n $-linear map
 $[\cdot \dots, \cdot]: L\times\dots \times L\rightarrow L$, a bicharacter $\varepsilon : \Gamma\times \Gamma\rightarrow \mathbb{K}^*$ such that:
	\begin{enumerate}
\item
$[x_{1},\ldots,x_{k},x_{k+1},\ldots,x_{n-1},x_{n}]
=-\varepsilon(x_{k},x_{k+1})[x_{1},\ldots,x_{k+1},x_{k},\ldots,x_{n-1},x_{n}]$,
$ \quad \quad  k=1,2, \dots, n-1.$ \nonumber

\item
\begin{equation}
\begin{array}{l}
 [x_1, \dots, x_{n-1}, [y_1, \dots, y_n]]
= \displaystyle \sum_{k=1}^{n}\varepsilon(X, Y_k) [y_1, \dots, y_{k-1}, [x_1, \dots, x_{n-1}, y_k], y_{k+1}, \dots y_{n}],\\
\end{array} \label{BakayokoSilvestrov:F}
\end{equation}
for all $ x_{1},\ldots,x_{n-1},y_{1},\ldots,y_{n} \in L,~~X=\sum_{k=1}^{n-1}x_k,~  Y_k=\sum_{l=1}^ky_{l-1}$ and $y_0=e$.
	\end{enumerate}
\end{definition}

\begin{example}
 Let $\Gamma=\mathbb{Z}_2,\quad \varepsilon(i, j)=(-1)^{ij}$,
$L=L_0\oplus L_1=<e_2, e_4>\oplus <e_1, e_3>$,
$$[e_1, e_2, e_3]=e_2,\quad [e_1, e_2, e_4]=e_1,\quad [e_1, e_3, e_4]=[e_2, e_3, e_4]=0.$$
Then $(L, [\cdot, \cdot, \cdot], \varepsilon)$ is a $3$-Lie color algebra.
\end{example}

\begin{definition} A $3$-BiHom-Lie color algebra is a $5$-tuple $(L,[\cdot,\cdot,\cdot],\varepsilon,\alpha,\beta)$ consisting of a $\Gamma$-graded vector space $L$, an even trilinear mapping
$[\cdot,\cdot,\cdot]:L\times L\times L\rightarrow L$, a bicharacter $\varepsilon:\Gamma\times\Gamma\rightarrow \mathbb{K^{\ast}}$ and two even linear maps $\alpha,\beta:L\rightarrow L$ such that for homogeneous elements $x,y,z,u,v$ we have
\begin{enumerate}
  \item $\alpha \circ \beta = \beta \circ \alpha,$
  \item $[\beta(x),\beta(y),\alpha(z)]=-\varepsilon(x,y)[\beta(y),\beta(x),\alpha(z)]=-\varepsilon(y,z)[\beta(x),\beta(z),\alpha(y)],\\$($\varepsilon-3-$BiHom-skewsymmetry).
  \item
  $ $\\
  $\begin{array}{lllllll}[\beta^{2}(x),\beta^{2}(y),[\beta(z),\beta(u),\alpha(v)]]&=&\varepsilon(u+v,x+y+z)[\beta^{2}(u),\beta^{2}(v),[\beta(x),\beta(y),\alpha(z)]]\\
&=&-\varepsilon(z+v,x+y)\varepsilon(u,v)[\beta^{2}(z),\beta^{2}(v),[\beta(x),\beta(y),\alpha(u)]]\\
&=&+\varepsilon(z+u,x+y)[\beta^{2}(z),\beta^{2}(u),[\beta(x),\beta(y),\alpha(v)]],\end{array}$

($\varepsilon-3-$BiHom-Jacobi identity).
\end{enumerate}
\end{definition}

\begin{definition}
	An $ n $-BiHom-Lie color algebra  A is a $5$-tuple $(L,[.,\dots,.],\varepsilon,\alpha,\beta)$ consisting of a $\Gamma$-graded vector space $L$, an even $ n $-linear map
 $[\cdot \dots, \cdot]: L\times\dots \times L\rightarrow L$, a bicharacter $\varepsilon : \Gamma\times \Gamma\rightarrow \mathbb{K}^*$ and two even linear maps $ \alpha $ and $ \beta $ such that :
	\begin{enumerate}
		\item $ \alpha \circ \beta=\beta \circ \alpha $.
\item
$  $

$
\begin{array}{lllll}
&&[\beta(x_{1}),\ldots,\beta(x_{k}),\beta(x_{k+1}),\ldots,\beta(x_{n-1}),\alpha(x_{n})]\\
&=&-\varepsilon(x_{k},x_{k+1})[\beta(x_{1}),\ldots,\beta(x_{k+1}),\beta(x_{k}),\ldots,\beta(x_{n-1}),\alpha(x_{n})]\\
&=&-\varepsilon(x_{n-1},x_{n})[\beta(x_{1}),\ldots,\beta(x_{n-2}),\beta(x_{n}),\alpha(x_{n-1})],\end{array}$

where $k=1,2, \dots , n-2.$ \nonumber

		\item
		\begin{align*}
			& [\beta^{2}(x_{1}),\ldots,\beta^{2}(x_{n-1}),[\beta(y_{1}),\ldots,\beta(y_{n-1}),\alpha(y_{n})]] \\ =&  \sum_{k=1}^{n}  \varepsilon(X, Y_k) [\beta^{2}(y_{1}),\ldots,\beta^{2}(y_{k-1}),[\beta(x_{1}),\ldots, \beta(x_{n-1}),\alpha(y_{k})],\beta^{2}(y_{k+1}),\ldots,\beta^{2}(y_{n})] ,
		\end{align*}  for all $ x_{1},\ldots,x_{n-1},y_{1},\ldots,y_{n} \in L,~~X=\sum_{k=1}^{n-1}x_k,~  Y_k=\sum_{l=1}^ky_{l-1}$ and $y_0=e$.
	\end{enumerate}
\end{definition}

\begin{remark}
\begin{enumerate}
\item [1)] The $n$-BiHom $\varepsilon$-Jacobi identity is equivalent to
\begin{align*}
			& [\beta^{2}(x_{1}),\ldots,\beta^{2}(x_{n-1}),[\beta(y_{1}),\ldots,\beta(y_{n-1}),\alpha(y_{n})]] \\ =&  \sum_{k=1}^{n}  \varepsilon(X, \widetilde{Y}_n)\varepsilon(y_{k},\overline{Y}_{k+1}) [\beta^{2}(y_{1}),\ldots,\beta^{2}(y_{k-1}),\beta^{2}(y_{k+1}),\ldots,\beta^{2}(y_{n}),[\beta(x_{1}),\ldots, \beta(x_{n-1}),\alpha(y_{k})]] ,
		\end{align*}  for all $ x_{1},\ldots,x_{n-1},y_{1},\ldots,y_{n} \in L,~~X=\sum_{k=1}^{n-1}x_k,~  \widetilde{Y}_n=\displaystyle{\sum_{l=1~l\neq k}^n} y_{l}$ and $\overline{Y}_{k+1}=\sum_{i=k+1}^{n}y_{i}$.
\item [2)] When $\alpha=\beta=id$, we get $n$-Lie color algebra.
\item [3)] When $\Gamma=\{e\}$ and $\alpha=\beta=id$, we get $n$-Lie algebra.
\item [4)] When $\Gamma=\{e\}$ and $\alpha,\beta\neq id$, we get $n$-BiHom-Lie algebra.
\end{enumerate}
\end{remark}
\begin{definitions}
\begin{enumerate}
\item An $n$-BiHom-Lie color algebra $(L,[\cdot, \dots, \cdot],\varepsilon,\alpha,\beta)$ is multiplicative if for any homogenous elements $x_{1},\cdots,x_{n}\in L,$ we have
$$
\alpha([x_{1},\cdots,x_{n}])=[\alpha(x_{1}),\cdots,\alpha(x_{n})]~~~~~~\text{and}~~~~\beta([x_{1},\cdots,x_{n}])=[\beta(x_{1}),\cdots,\beta(x_{n})].
$$
\item An $n$-BiHom-Lie color algebra $(L,[\cdot, \dots, \cdot],\varepsilon,\alpha,\beta)$ is regular if $\alpha$ and $\beta$ are  even algebra automorphisms.
\item An $n$-BiHom-Lie color algebra $(L,[\cdot, \dots, \cdot],\varepsilon,\alpha,\beta)$ is involutive if $\alpha$ and $\beta$ are involutions, i.e., $\alpha^{2}=\beta^{2}=id$.
\end{enumerate}
\end{definitions}

\begin{definition}
 A morphism   $f : (L, [\cdot, \dots, \cdot], \varepsilon, \alpha,\beta)\rightarrow (L', [\cdot, \dots, \cdot]', \varepsilon, \alpha',\beta')$ of an $n$-BiHom-Lie color algebras
 is an even linear map $f : L\rightarrow L'$ such that
$f\circ\alpha=\alpha'\circ f,~~f\circ\beta=\beta'\circ f$  and for any $x_k\in\mathcal{H}(L)$,
$$f([x_1, \dots, x_n])=[f(x_1), \dots, f(x_n)]'$$
\end{definition}
Denote by $\phi_f = \{x+f(x); ~~x\in L\}\subset L\oplus L'$ which is the graph of a linear map $f : L\rightarrow L'$.
\begin{definitions}
\begin{enumerate}
\item
A subspace $H$ of $L$ is an $n$-BiHom-subalgebra of $(L,[.,\cdots,.],\varepsilon,\alpha,\beta)$ if $\alpha(H)\subset H$,
$\beta(H)\subset H$ and $[H,\cdots,H]\subseteq H$, (i.e.,
$[x_{1},\cdots,x_{n}]\in H,~~\forall x_{k}\in H$).
\item
A subspace $I\subset L$ is an $n$-BiHom ideal of $(L,[.,\cdots,.],\varepsilon,\alpha,\beta)$ if $\alpha(I)\subset I$, $\beta(I)\subset I$\\
and $[I,L,\cdots,L]\subseteq I$, (i.e.
$[x,y_{1},\cdots,y_{n-1}]\in I,~~\forall x\in I;~y_{k}\in L$.
\end{enumerate}
\end{definitions}
\begin{definitions}
\begin{enumerate}
\item
The center of $(L,[.,\cdots,.],\varepsilon,\alpha,\beta)$ is the set of $x\in L$ such that\\ $[x,y_{1},\cdots,y_{n-1}]=0,$ for any $y_{i}\in L$. The center is an ideal of $L$ which we will denote by $Z(L)$.
\item
The $(\alpha,\beta)$-center of $(L,[.,\cdots,.],\varepsilon,\alpha,\beta)$ is the set
$$Z_({\alpha,\beta})=\{x\in L,~[x,\alpha\beta(y_{1}),\cdots,\alpha\beta(y_{n-1})]=0,~~for~any~y_{k}\in L\}$$
\end{enumerate}

\end{definitions}

\begin{definition}
Let $H_1, H_2, \dots, H_n$ be BiHom-subalgebras of an $n$-BiHom-Lie color algebra $L$. Denote by $[H_1, H_2, \dots, H_n]$ the BiHom-subalgebra of $L$
generated by all elements $[x_1, x_2, \dots, x_n]$, where $x_k\in H_k, k=1, 2, \dots, n$.
\begin{enumerate}
 \item [i)] The sequence $L_1, L_2, \dots, L_n, \dots$ defined by
\begin{gather*}
L_0=L,\quad L_1=[L_0, L_0, \dots, L_0],\quad L_2=[L_1, L_1, \dots, L_1], \dots,\\
L_n=[L_{n-1}, L_{n-1}, \dots, L_{n-1}], \dots
\end{gather*}
is called the derived sequence. \index{Derived sequence}
\item [ii)] The sequence $L^1, L^2, \dots, L^n, \dots$ defined by
\begin{gather*} L^0=L,\quad L^1=[L^0, L, \dots, L],\quad L^2=[L^1, L, \dots, L], \dots, \\
L^n=[L^{n-1}, L, \dots, L], \dots \end{gather*}
is called the descending central sequence. \index{Descending central sequence}
\end{enumerate}
\end{definition}
\begin{theorem}
 Let $(L, [\cdot, \dots, \cdot], \varepsilon, \alpha,\beta)$ be an involutive $n$-BiHom-Lie color algebra. Then,
$I_n, I^n$ and $Z(L)$ are BiHom-ideals of $L$.
\end{theorem}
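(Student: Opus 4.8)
The plan is to isolate one lemma and then run two easy inductions. Since our $n$-BiHom-Lie color algebras are multiplicative and, being involutive here, $\alpha$ and $\beta$ are commuting automorphisms with $\alpha^{-1}=\alpha$ and $\beta^{-1}=\beta$; I will use this repeatedly to convert twisted expressions into untwisted ones. (Here $I_n$ and $I^n$ denote the $n$-th terms $L_n$ and $L^n$ of the derived sequence and of the descending central sequence.) The key lemma I would prove first is: \emph{if $I_1,\dots,I_n$ are BiHom-ideals of $L$, then the subspace $[I_1,\dots,I_n]$ spanned by all brackets $[x_1,\dots,x_n]$ with $x_k\in I_k$ is again a BiHom-ideal of $L$.} Granting this, the theorem is immediate: $L$ itself is a BiHom-ideal of $L$, so with $I_1=\cdots=I_n=L$ we get that $L_1=[L,\dots,L]$ is a BiHom-ideal, hence a BiHom-subalgebra, so it coincides with the subalgebra it generates; an induction on $k$ applying the lemma with $I_1=\cdots=I_n=L_{k-1}$ shows every $L_k$ is a BiHom-ideal, and an induction applying the lemma with $I_1=L^{k-1}$ and $I_2=\cdots=I_n=L$ shows every $L^k$ is a BiHom-ideal.

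For the lemma, $\alpha$- and $\beta$-stability of $[I_1,\dots,I_n]$ is just multiplicativity: $\alpha([x_1,\dots,x_n])=[\alpha(x_1),\dots,\alpha(x_n)]\in[I_1,\dots,I_n]$ because each $I_k$ is $\alpha$-stable, and similarly for $\beta$. The content is the absorption property $[[I_1,\dots,I_n],L,\dots,L]\subseteq[I_1,\dots,I_n]$. I would take $a=[x_1,\dots,x_n]$ with $x_k\in I_k$ and $z_1,\dots,z_{n-1}\in L$, and using $\alpha^2=\beta^2=\mathrm{id}$ rewrite $[a,z_1,\dots,z_{n-1}]$ with all slots in the pattern $\beta(\cdot),\dots,\beta(\cdot),\alpha(\cdot)$ required by the axioms; the $\varepsilon$-BiHom skewsymmetry (whose adjacent transpositions generate $S_n$) then moves $a$ into the last slot up to a nonzero scalar, the other slots staying occupied by elements of the form $\beta^{i}\alpha^{j}(z_l)\in L$. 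So it suffices to show $[w_1,\dots,w_{n-1},b]\in[I_1,\dots,I_n]$ when $w_l\in L$ and $b$ is a twist of a bracket of elements of $I_1,\dots,I_n$; writing $b=[\beta(\tilde y_1),\dots,\beta(\tilde y_{n-1}),\alpha(\tilde y_n)]$ with $\tilde y_k\in I_k$ (possible since the $I_k$ are $\alpha,\beta$-stable) and $w_l=\beta^2(w_l)$, the $n$-BiHom $\varepsilon$-Jacobi identity gives
\[
[w_1,\dots,w_{n-1},b]=\sum_{k=1}^{n}\varepsilon(X,Y_k)\,[\beta^2(\tilde y_1),\dots,[\beta(w_1),\dots,\beta(w_{n-1}),\alpha(\tilde y_k)],\dots,\beta^2(\tilde y_n)],
\]
with $X,Y_k$ as in the Jacobi identity. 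In the $k$-th summand $\beta^2(\tilde y_j)=\tilde y_j\in I_j$, while the inner bracket $[\beta(w_1),\dots,\beta(w_{n-1}),\alpha(\tilde y_k)]$ has one entry $\alpha(\tilde y_k)\in I_k$ and the rest in $L$, so (move $\alpha(\tilde y_k)$ to the first slot by skewsymmetry and use the absorption property of $I_k$) it lies in $I_k$; hence the $k$-th summand lies in $[I_1,\dots,I_n]$, and the lemma follows.

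It remains to treat $Z(L)$, which is $\alpha$- and $\beta$-stable: for $x\in Z(L)$ and $y_1,\dots,y_{n-1}\in L$, surjectivity of $\alpha$ lets us write $y_l=\alpha(u_l)$, so $[\alpha(x),y_1,\dots,y_{n-1}]=\alpha([x,u_1,\dots,u_{n-1}])=\alpha(0)=0$, hence $\alpha(x)\in Z(L)$, and symmetrically $\beta(x)\in Z(L)$; and $[Z(L),L,\dots,L]=\{0\}\subseteq Z(L)$ by the very definition of the center. So $Z(L)$ is a BiHom-ideal as well, completing the proof.

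I expect the only real difficulty to be the bookkeeping inside the lemma: one must check that every twist $\alpha$, $\beta$, $\beta^2$ forced on us by the skewsymmetry and Jacobi axioms can be absorbed through $\alpha^2=\beta^2=\mathrm{id}$ without pushing any entry out of the ideal it came from, and that permuting slots to match the prescribed $\beta,\dots,\beta,\alpha$ decoration never breaks the argument. This is precisely where involutivity is used — it is what makes $\alpha,\beta$ invertible and lets every $n$-tuple be presented in the form the axioms demand; everything else is routine.
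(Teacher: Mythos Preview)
Your proof is correct and the core computation — using involutivity to cast an arbitrary bracket in the $\beta,\dots,\beta,\alpha$ pattern, then applying the $n$-BiHom $\varepsilon$-Jacobi identity and absorbing each inner bracket into the appropriate ideal — is exactly what the paper does. The paper carries this out only for the derived sequence $L_n$, leaving $L^n$ and $Z(L)$ implicit, and it runs the induction directly on $L_n$ rather than isolating a reusable statement.

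Your packaging is a genuine improvement. By proving once that $[I_1,\dots,I_n]$ is a BiHom-ideal whenever each $I_k$ is, you get both sequences from the same lemma with no extra work, and you sidestep a wrinkle in the paper's argument: the paper defines $[H_1,\dots,H_n]$ as the \emph{subalgebra generated} by brackets, yet its induction treats a generic element of $I_n$ as a single bracket $[y_1,\dots,y_n]$; your observation that the span of such brackets is already an ideal (hence a subalgebra, hence equal to the generated subalgebra) is what makes that step legitimate. You also handle $Z(L)$ explicitly, which the paper omits. The only place to be careful is the one you flagged: when you permute entries via the BiHom-skewsymmetry to move $a$ to the last slot, the element that ends up inside the $\alpha(\cdot)$ decoration is $\beta^{-1}\alpha(\cdot)$ applied to the original, so you genuinely need $\alpha,\beta$ bijective and the $I_k$ stable under them — exactly as you say.
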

\begin{proof}
We only prove, by induction, that $I_n$ is a BiHom-ideal. For this, suppose, first, that $I_{n-1}$ is a BiHom-subalgebra of $L$ and show that $I_n$
is a BiHom-subalgebra of $L$.
For any $y\in \mathcal{H}(I_n)$, there exist $y_1, y_2, \dots, y_n\in \mathcal{H}(I_{n-1})$, such that
$$y=[y_1, y_2, \dots, y_n].$$ So, $\alpha(y)=\alpha([y_1, y_2, \dots, y_n])=\alpha(y_1), \alpha(y_2), \dots, \alpha(y_n)]\in I_{n}$ and $\beta(y)=\beta([y_1, y_2, \dots, y_n])=$\\$[[\beta(y_1), \beta(y_2), \dots, \beta(y_n)]\in I_{n}$, because $I_{n-1}$ is a BiHom-subalgebra. That is $\alpha(I_n)\subseteq I_n$ and $\beta(I_n)\subseteq I_n$.

For any $y_k\in \mathcal{H}(I_{n})$, there exist $y_k^1, y_k^2, \dots, y_k^n\in I_{n-1}, k=1, 2, \dots, n$ such that
$$[y_1, y_2, \dots, y_n]=[[y_1^1, y_1^2, \dots, y_1^n], [y_2^1, \dots, y_2^n], \dots, [y_n^1, \dots, y_n^n]].$$
$I_{n-1}$ being a BiHom-subalgebra, by hypotheses, $[y_k^1, y_i^2, \dots, y_k^n]\in I_{n-1}$ for $1\leq i\leq n$, and so
$[y_1, y_2, \dots, y_n]\in I_n$. Thus $I_n$ is a BiHom-subalgebra.

Now, suppose that $I_{n-1}$ is a BiHom-ideal. Let $x_1, \dots, x_{n-1}\in L, y\in I_n$, then there exist $y_1, \dots, y_{n}\in I_{n-1}$ (i.e $\alpha$ and $\beta$ are involution) such that
$$\begin{array}{lllllll}
&& [x_1, \dots, x_{n-1}, y]
=[\beta^{2}(x_1), \dots, \beta^{2}(x_{n-1}), [\beta(y_1), \dots, \beta(y_{n-1}),\alpha(y_{n})]]\nonumber\\
&&=\sum_{k=1}^{n}  \varepsilon(X, Y_k) [\beta^{2}(y_{1}),\ldots,\beta^{2}(y_{k-1}),[\beta(x_{1}),\ldots, \beta(x_{n-1}),\alpha(y_{k})],\beta^{2}(y_{k+1}),\ldots,\beta^{2}(y_{n})]
\end{array}$$
As $[\beta(x_{1}),\ldots, \beta(x_{n-1}),\alpha(y_{k})]\in I_{n-1}$, then $[x_1, \dots, x_{n-1}, y]\in I_n$. So, $I_n$ is a BiHom-ideal of $L$.
\end{proof}

\section{Constructions of $n$-BiHom-Lie color algebras}
\label{BakayokoSilvestrov:sec:constrnhomcoloralg}
In this section we present some useful methods for construction of $n$-BiHom-Lie color algebras.
\begin{proposition}
Let $(L, [\cdot, \dots, \cdot], \varepsilon, \alpha,\beta)$ be an $n$-BiHom-Lie color algebra and $I$ a BiHom-ideal of $(L, [\cdot, \dots, \cdot], \varepsilon, \alpha,\beta)$.
Then $(L/I,\overline{[\cdot, \dots, \cdot]}, \varepsilon,\overline{\alpha},\overline{\beta})$ is an $n$-BiHom-Lie color algebra where $[\overline{x_{1}}, \dots, \overline{x_{n}}]=\overline{[x_{1}, \dots, x_{n}]}$,
$\overline{\alpha}(\overline{x})=\overline{\alpha(x)}$  and $\overline{\beta}(\overline{x})=\overline{\beta(x)}$, for all $\overline{x},~\overline{x_{1}},\ldots,\overline{x_{n}}\in \mathcal{H}(A/I)$
\end{proposition}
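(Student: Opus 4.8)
The plan is to verify that the quotient data $(L/I,\overline{[\cdot,\dots,\cdot]},\varepsilon,\overline{\alpha},\overline{\beta})$ satisfies all three axioms of an $n$-BiHom-Lie color algebra, the only genuine preliminary being to check that everything is \emph{well defined}. First I would observe that since $I$ is a BiHom-ideal, $\alpha(I)\subseteq I$ and $\beta(I)\subseteq I$, so the induced maps $\overline{\alpha}(\overline{x})=\overline{\alpha(x)}$ and $\overline{\beta}(\overline{x})=\overline{\beta(x)}$ are well defined even linear maps on the graded quotient $L/I$ (the grading on $L/I$ is inherited from $L$, with $(L/I)_\gamma = L_\gamma/(I\cap L_\gamma)$). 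For the bracket, I would check that $\overline{[x_1,\dots,x_n]}$ depends only on the classes $\overline{x_i}$: if $x_i'=x_i+a$ with $a\in I$, then $[x_1,\dots,x_{i-1},x_i',x_{i+1},\dots,x_n]-[x_1,\dots,x_n]=[x_1,\dots,a,\dots,x_n]$, which lies in $I$ because $I$ is an ideal and the bracket is $\varepsilon$-BiHom-skewsymmetric, so any slot containing an element of $I$ produces an element of $I$ (up to a nonzero scalar from $\varepsilon$ after permuting $a$ into the first position). Hence $\overline{[\cdot,\dots,\cdot]}$ is a well-defined even $n$-linear map on $L/I$.

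Next I would verify the axioms one by one, each reducing immediately to the corresponding identity in $L$ via the projection $\pi\colon L\to L/I$. For axiom (1), $\overline{\alpha}\circ\overline{\beta}=\overline{\beta}\circ\overline{\alpha}$ follows by applying $\pi$ to $\alpha\circ\beta=\beta\circ\alpha$. For axiom (2), the $\varepsilon$-$n$-BiHom-skewsymmetry, I would apply $\pi$ to the defining chain of identities in $L$ and use $\pi\circ\alpha=\overline{\alpha}\circ\pi$, $\pi\circ\beta=\overline{\beta}\circ\pi$ together with the fact that $\pi$ is a morphism for the brackets; since $\varepsilon$ is unchanged (it depends only on degrees, which are preserved by $\pi$), the identity descends verbatim. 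For axiom (3), the $\varepsilon$-$n$-BiHom-Jacobi identity, I would do the same: write the identity in $L$ for representatives $x_1,\dots,x_{n-1},y_1,\dots,y_n$, apply $\pi$ to both sides, and push $\pi$ through all occurrences of $\alpha,\beta,\alpha^2,\beta^2$ and the nested brackets using multiplicativity-style intertwining relations $\pi\alpha=\overline\alpha\pi$, etc. The scalars $\varepsilon(X,Y_k)$ are the same since the degrees of $\overline{x_i},\overline{y_j}$ agree with those of $x_i,y_j$.

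I do not expect any serious obstacle here; the proposition is essentially a bookkeeping verification. The one point that deserves a careful sentence is the well-definedness of the bracket, and in particular making explicit that an element of $I$ sitting in \emph{any} of the $n$ slots (not just the first) forces the bracket into $I$ — this is where the $\varepsilon$-BiHom-skewsymmetry axiom (2) is used, to transpose the $I$-entry into the first slot at the cost of a nonzero bicharacter factor, after which the ideal condition $[I,L,\dots,L]\subseteq I$ applies. A second minor point is confirming that $L/I$ really is $\Gamma$-graded with the quotient grading, which follows because $I$ is a graded subspace (an immediate consequence of $\alpha(I)\subseteq I$, $\beta(I)\subseteq I$ and the definition of BiHom-ideal, which in this paper's conventions presupposes $I$ is graded). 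Once these are in place, each axiom is just ``apply $\pi$ to the identity valid in $L$,'' and the proof concludes.
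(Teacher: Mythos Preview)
Your approach is the same as the paper's: the paper's proof reads in its entirety ``It follows from a straightforward computation,'' and you are supplying precisely that computation.

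One caveat on your well-definedness step. You argue that an element $a\in I$ sitting in an arbitrary slot can be transposed into the first slot via the $\varepsilon$-BiHom-skewsymmetry, after which $[I,L,\dots,L]\subseteq I$ applies. But axiom (2) in the paper's Definition~2.6 asserts skewsymmetry only for tuples of the special form $(\beta(x_1),\dots,\beta(x_{n-1}),\alpha(x_n))$, not for arbitrary $(x_1,\dots,x_n)$; in the BiHom framework the bracket is \emph{not} unconditionally $\varepsilon$-skewsymmetric. So as written you cannot freely permute $a$ into the first position. The paper's one-line proof does not address this either, and the ideal definition given (stability only in the first slot) is, strictly speaking, too weak to guarantee well-definedness without extra hypotheses such as surjectivity of $\alpha$ and $\beta$ or a strengthened ideal condition requiring $[L,\dots,I,\dots,L]\subseteq I$ in every slot. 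Your write-up is otherwise fine; just be aware that this particular step needs either an added hypothesis or a different justification.
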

\begin{proof}
It follows from a straightforward computation.
\end{proof}
\begin{proposition}
Let $(L, [\cdot, \dots, \cdot], \varepsilon, \alpha,\beta)$ be an $n$-BiHom-Lie color algebra and $u\in L_e$ such that $\beta(u)=u$.
Then $(L, \{\cdot, \dots, \cdot\}, \varepsilon, \alpha,\beta)$ is an $(n-1)$-BiHom-Lie color algebra with
$$\{x_1, \dots, x_{n-1}\}=[u, x_1, \dots, x_{n-1}].$$
\end{proposition}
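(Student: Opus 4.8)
The plan is to verify directly that the $5$-tuple $(L,\{\cdot,\dots,\cdot\},\varepsilon,\alpha,\beta)$ satisfies the three axioms of an $(n-1)$-BiHom-Lie color algebra. Condition (1), $\alpha\circ\beta=\beta\circ\alpha$, is inherited verbatim from $L$. The map $\{\cdot,\dots,\cdot\}$ is even and $(n-1)$-linear because $[\cdot,\dots,\cdot]$ is even $n$-linear and $u$ is homogeneous of degree the neutral element of $\Gamma$; in particular $\varepsilon(u,\cdot)=\varepsilon(\cdot,u)=1$ and $\varepsilon(u,u)=1$, which I will use repeatedly. So all the content is in conditions (2) and (3).

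For the $\varepsilon$-$(n-1)$-BiHom-skewsymmetry I would use $\beta(u)=u$ to rewrite
\[
\{\beta(x_1),\dots,\beta(x_{n-2}),\alpha(x_{n-1})\}=[\beta(u),\beta(x_1),\dots,\beta(x_{n-2}),\alpha(x_{n-1})]
\]
and then apply the $n$-BiHom-skewsymmetry of $L$: interchanging two adjacent entries among $\beta(x_1),\dots,\beta(x_{n-2})$ (which occupy slots $2,\dots,n-1$, so this is covered by the cases $k=2,\dots,n-2$ of the $n$-ary identity) produces the factor $-\varepsilon(x_i,x_{i+1})$, and the last-slot interchange of $\beta(x_{n-2})$ with $\alpha(x_{n-1})$ produces $-\varepsilon(x_{n-2},x_{n-1})$. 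Since $\overline u$ is neutral the slot carrying $u$ is never moved, so no extra bicharacter factor appears, and we get exactly the relations required of $\{\cdot,\dots,\cdot\}$.

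For the $\varepsilon$-$(n-1)$-BiHom-Jacobi identity I would expand the left-hand side using $\beta(u)=u$, hence $\beta^{2}(u)=u$:
\[
\{\beta^{2}(x_1),\dots,\beta^{2}(x_{n-2}),\{\beta(y_1),\dots,\beta(y_{n-2}),\alpha(y_{n-1})\}\}=[\beta^{2}(u),\beta^{2}(x_1),\dots,\beta^{2}(x_{n-2}),[\beta(u),\beta(y_1),\dots,\beta(y_{n-2}),\alpha(y_{n-1})]],
\]
which is precisely the left-hand side of the $n$-BiHom $\varepsilon$-Jacobi identity of $L$ applied to the tuples $(u,x_1,\dots,x_{n-2})$ and $(u,y_1,\dots,y_{n-1})$. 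That identity expresses it as a sum of $n$ terms indexed $k=1,\dots,n$. The decisive point is that the $k=1$ term, whose inner bracket is $[\beta(u),\beta(x_1),\dots,\beta(x_{n-2}),\alpha(u)]$, vanishes: by the last-slot skewsymmetry I move $\alpha(u)$ past $\beta(x_{n-2})$, and then by adjacent skewsymmetries I bring the two copies of $\beta(u)$ next to each other — every bicharacter factor produced along the way is of the form $\varepsilon(u,\cdot)=1$ — so the inner bracket reduces to $\pm[\beta(u),\beta(u),\dots]$, which equals its own negative because $\varepsilon(u,u)=1$ and hence is $0$ since $\operatorname{char}\mathbb{K}\neq2$. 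For each remaining term ($k=2,\dots,n$), recognizing each bracket $[u,\cdot,\dots,\cdot]$ as a $\{\cdot,\dots,\cdot\}$-bracket and reindexing by $j=k-1$ gives exactly the right-hand side of the $(n-1)$-BiHom $\varepsilon$-Jacobi identity; the only thing to check is that the bicharacter coefficients agree, which follows from $\overline u$ being neutral: $X=\overline u+\sum_{i=1}^{n-2}\overline{x_i}=\sum_{i=1}^{n-2}\overline{x_i}$, and the index attached to the $k$-th term, $Y_k=\overline{y_0}+\overline u+\sum_{i=1}^{k-2}\overline{y_i}=\sum_{i=1}^{k-1}\overline{y_{i-1}}$, is exactly the index $Y'_{k-1}$ of the $(k-1)$-st term of the $(n-1)$-ary identity.

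The hard part is the vanishing of the $k=1$ term, i.e. showing $[\beta(u),\beta(x_1),\dots,\beta(x_{n-2}),\alpha(u)]=0$; this is the only genuinely non-formal step, and it is exactly where the hypotheses $u\in L_e$ (so that $\varepsilon(u,\cdot)=1$ and $\varepsilon(u,u)=1$) and $\operatorname{char}\mathbb{K}\neq2$ are used. Everything else is bookkeeping: $\beta(u)=u$ disposes of all the $\beta$-twists, and $\overline u$ being neutral disposes of all the bicharacter coefficients.
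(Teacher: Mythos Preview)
Your proposal is correct and follows essentially the same route as the paper's proof: verify skewsymmetry by inserting $\beta(u)=u$ in the first slot and invoking the $n$-ary axiom, then derive the $(n-1)$-ary Jacobi identity from the $n$-ary one applied to the tuples $(u,x_1,\dots,x_{n-2})$ and $(u,y_1,\dots,y_{n-1})$, discarding the $k=1$ term. In fact you are more explicit than the paper about why that term vanishes --- the paper simply drops $[[u,\beta(x_1),\dots,\beta(x_{n-2}),u],\dots]$ without comment --- so your argument is if anything the more careful one.
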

\begin{proof} With conditions in the statement, for all $x_{i},y_{i}\in L$, we have
$$
\begin{array}{lllll}
&&\{\beta(x_{1}),\ldots,\beta(x_{k}),\beta(x_{k+1}),\ldots,\beta(x_{n-2}),\alpha(x_{n-1})\}\\[0.2cm]
&=&[u,\beta(x_{1}),\ldots,\beta(x_{k}),\beta(x_{k+1}),\ldots,\beta(x_{n-2}),\alpha(x_{n-1})]\\[0.2cm]
&=&[\beta(u),\beta(x_{1}),\ldots,\beta(x_{k}),\beta(x_{k+1}),\ldots,\beta(x_{n-2}),\alpha(x_{n-1})]\\[0.2cm]
&=&-\varepsilon(x_{k},x_{k+1})[\beta(u),\beta(x_{1}),\ldots,\beta(x_{k+1}),\beta(x_{k}),\ldots,\beta(x_{n-2}),\alpha(x_{n-1})]\\[0.2cm]
&=&-\varepsilon(x_{n-1},x_{n-2})[\beta(u),\beta(x_{1}),\ldots,\beta(x_{n-3}),\beta(x_{n-1}),\alpha(x_{n-2})]\\[0.2cm]
&=&-\varepsilon(x_{k},x_{k+1})[u,\beta(x_{1}),\ldots,\beta(x_{k+1}),\beta(x_{k}),\ldots,\beta(x_{n-2}),\alpha(x_{n-1})]\\[0.2cm]
&=&-\varepsilon(x_{n-1},x_{n-2})[u,\beta(x_{1}),\ldots,\beta(x_{n-3}),\beta(x_{n-1}),\alpha(x_{n-2})]\\[0.2cm]
&=&-\varepsilon(x_{k},x_{k+1})\{\beta(x_{1}),\ldots,\beta(x_{k+1}),\beta(x_{k}),\ldots,\beta(x_{n-2}),\alpha(x_{n-1})\}\\[0.2cm]
&=&-\varepsilon(x_{n-1},x_{n-2})\{\beta(x_{1}),\ldots,\beta(x_{n-3}),\beta(x_{n-1}),\alpha(x_{n-2})\},\end{array}$$ where  k=1,2, \dots n-3.

Next, we have
$$\begin{array}{llll}
&&\{\beta^{2}(x_1), \dots, \beta^{2}(x_{n-2}), \{\beta(y_1), \dots, \beta(y_{n-2}), \alpha(y_{n-1})\}\} \nonumber\\[0.2cm]
&=&[u,\beta^{2}(x_1), \dots, \beta^{2}(x_{n-2}), [u,\beta(y_1), \dots, \beta(y_{n-2}), \alpha(y_{n-1})]]\nonumber\\[0.2cm]
&=&[\beta^{2}(u),\beta^{2}(x_1), \dots, \beta^{2}(x_{n-2}), [\beta(u),\beta(y_1), \dots, \beta(y_{n-2}), \alpha(y_{n-1})]]\nonumber\\[0.2cm]

&=&[[\beta(u), \beta(x_1), \dots, \beta(x_{n-2}), \alpha(u)], \beta^{2}(y_1), \dots, \beta^{2}(y_{n-1})]\nonumber\\[0.2cm]
&&+\displaystyle{\sum_{k=1}^{n-1}}\varepsilon(X, Y_k)[\beta^{2}(u), \beta^{2}(y_1), \dots, \beta^{2}(y_{k-1}), [\beta(u), \beta(x_1), \dots, \beta(x_{n-2}), \alpha(y_k)], \beta^{2}(y_{k+1}), \dots, \beta^{2}(y_{n-1})]\nonumber\\[0.2cm]

&=&[[u, \beta(x_1), \dots, \beta(x_{n-2}), u], \beta^{2}(y_1), \dots, \beta^{2}(y_{n-1})]\nonumber\\[0.2cm]
&&+\displaystyle{\sum_{k=1}^{n-1}}\varepsilon(X, Y_k)[u, \beta^{2}(y_1), \dots, \beta^{2}(y_{k-1}), [u, \beta(x_1), \dots, \beta(x_{n-2}), \alpha(y_k)], \beta^{2}(y_{k+1}), \dots, \beta^{2}(y_{n-1})]\nonumber\\[0.2cm]
&=&\displaystyle{\sum_{k=1}^{n-1}}\varepsilon(X, Y_k)\{\beta^{2}(y_1), \dots, \beta^{2}(y_{k-1}), \{\beta(x_1), \dots, \beta(x_{n-2}), \alpha(y_k)\}, \beta^{2}(y_{k+1}), \dots, \beta^{2}(y_{n-1})\}\nonumber,
  \end{array}$$
where $X=\sum_{k=1}^{n-2}x_k,~  Y_k=\sum_{l=1}^ky_{l-1}$.

which completes the proof.
\end{proof}

\begin{corollary}
Let $(L, [\cdot, \dots, \cdot], \varepsilon, \alpha,\beta)$ be an $n$-BiHom-Lie color algebra and $u_i\in L_e$ such that $\beta(u_i)=u_i,~ i=1,2, \dots, k$.
Then $L_k=(L, \{\cdot, \dots, \cdot\}_k, \varepsilon, \alpha,\beta)$ is an $(n-k)$-BiHom-Lie color algebra with
$\{x_1, \dots, x_{n-k}\}_k=[u_1, \dots, u_k, x_1\dots, \dots, x_{n-k}].$
\end{corollary}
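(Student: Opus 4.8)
The plan is to derive the corollary by iterating the preceding proposition. The key observation is that the proposition says: given an $n$-BiHom-Lie color algebra and a single element $u\in L_e$ with $\beta(u)=u$, the bracket $\{x_1,\dots,x_{n-1}\}=[u,x_1,\dots,x_{n-1}]$ defines an $(n-1)$-BiHom-Lie color algebra \emph{on the same underlying graded space, with the same $\alpha$, $\beta$ and $\varepsilon$}. Since $\alpha$ and $\beta$ are unchanged, the hypothesis ``$u_i\in L_e$ and $\beta(u_i)=u_i$'' retains its meaning after each step, so the construction can be applied repeatedly.

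First I would set up the induction on $k$. For $k=1$ the statement is exactly the proposition. Assume the result holds for $k-1$: that is, $L_{k-1}=(L,\{\cdot,\dots,\cdot\}_{k-1},\varepsilon,\alpha,\beta)$ is an $(n-k+1)$-BiHom-Lie color algebra with $\{x_1,\dots,x_{n-k+1}\}_{k-1}=[u_1,\dots,u_{k-1},x_1,\dots,x_{n-k+1}]$. Now I would apply the proposition to the algebra $L_{k-1}$ with the distinguished element $u_k$: one must check $u_k\in L_e$ (given) and $\beta(u_k)=u_k$ (given), and these are precisely the hypotheses the proposition needs, since $L_{k-1}$ carries the same $\beta$. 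The proposition then yields an $(n-k)$-BiHom-Lie color algebra $(L,\{\cdot,\dots,\cdot\}',\varepsilon,\alpha,\beta)$ with
$$\{x_1,\dots,x_{n-k}\}'=\{u_k,x_1,\dots,x_{n-k}\}_{k-1}=[u_1,\dots,u_{k-1},u_k,x_1,\dots,x_{n-k}]=\{x_1,\dots,x_{n-k}\}_k,$$
which is exactly the desired bracket. This closes the induction.

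The only genuine point requiring care — and the step I expect to be the mild obstacle — is verifying that the hypotheses of the proposition are stable under iteration, i.e.\ that at stage $k-1$ the element $u_k$ still lies in the ``even'' part and is still $\beta$-invariant with respect to the structure on $L_{k-1}$. This is immediate because the proposition does not alter $\alpha$, $\beta$, the grading, or $\varepsilon$; it only replaces the multilinear bracket. One should also note that the order in which the $u_i$ are inserted does not matter for the final formula up to the $\varepsilon$-skewsymmetry in item~(2) of the $n$-BiHom-Lie color algebra axioms, but since the corollary fixes the order $u_1,\dots,u_k$ there is nothing further to check. A brief remark that ``the multiplicativity, regularity, or involutivity hypotheses, if present, are likewise inherited at each stage'' could be added, but is not needed for the statement as given.
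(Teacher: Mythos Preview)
Your proposal is correct and matches the paper's approach: the corollary is stated immediately after the proposition with no separate proof, so the intended argument is precisely the iteration/induction you describe. Your observation that $\alpha$, $\beta$, the grading, and $\varepsilon$ are unchanged at each step is exactly what makes the induction go through, and there is nothing further the paper adds.
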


\begin{corollary}
Let $(L, [\cdot, \dots, \cdot], \varepsilon)$ be an $n$-Lie color algebra and $u\in L_e$.

Then $(L, \{\cdot, \dots, \cdot\}, \varepsilon)$ is an $(n-1)$-Lie color algebra with
$$\{x_1, \dots, x_{n-1}\}=[u, x_1, \dots, \dots, x_{n-1}].$$
\end{corollary}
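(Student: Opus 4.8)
The quickest route is to read this statement off as the special case $\alpha=\beta=\mathrm{id}_L$ of the Proposition above. With these choices the $5$-tuple $(L,[\cdot,\dots,\cdot],\varepsilon,\mathrm{id}_L,\mathrm{id}_L)$ is an $n$-BiHom-Lie color algebra which, by the Remark (item 2), is nothing but the given $n$-Lie color algebra $(L,[\cdot,\dots,\cdot],\varepsilon)$. The hypothesis $\beta(u)=u$ of the Proposition is then vacuous, so for every $u\in L_e$ the Proposition produces an $(n-1)$-BiHom-Lie color algebra $(L,\{\cdot,\dots,\cdot\},\varepsilon,\mathrm{id}_L,\mathrm{id}_L)$ with $\{x_1,\dots,x_{n-1}\}=[u,x_1,\dots,x_{n-1}]$; applying the Remark (item 2) once more, this is exactly an $(n-1)$-Lie color algebra. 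So in principle nothing beyond the Proposition is needed.

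For a self-contained argument one verifies the two axioms directly. The $\varepsilon$-skewsymmetry of $\{\cdot,\dots,\cdot\}$ in its $n-1$ slots is immediate from that of $[\cdot,\dots,\cdot]$, since prepending the fixed entry $u$ does not interfere with transpositions among $x_1,\dots,x_{n-1}$. For the $(n-1)$-ary $\varepsilon$-Nambu identity, the plan is to expand
\[
\{x_1,\dots,x_{n-2},\{y_1,\dots,y_{n-1}\}\}=[u,x_1,\dots,x_{n-2},[u,y_1,\dots,y_{n-1}]]
\]
by means of the $n$-ary $\varepsilon$-Nambu identity \eqref{BakayokoSilvestrov:F}, applied with the $n-1$ outer arguments $(u,x_1,\dots,x_{n-2})$ and the inner bracket on the $n$ arguments $(u,y_1,\dots,y_{n-1})$. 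This yields $n$ summands, indexed by which inner slot the outer operator hits. The summand acting on the first inner slot (the entry $u$) is $[[u,x_1,\dots,x_{n-2},u],\,y_1,\dots,y_{n-1}]$, and the remaining $n-1$ summands, after the index shift $k\mapsto k-1$, are precisely $\varepsilon(X,Y_k)\,\{y_1,\dots,y_{k-1},\{x_1,\dots,x_{n-2},y_k\},y_{k+1},\dots,y_{n-1}\}$ with $X=\sum_{k=1}^{n-2}x_k$ and $Y_k=\sum_{l=1}^{k}y_{l-1}$, $y_0=e$. Matching the bicharacter coefficients is routine once one notes that both $u$ (because $u\in L_e$) and the formal symbol $e$ contribute the trivial value $1$ to every instance of $\varepsilon$.

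The one point that deserves care — and the only place where $u\in L_e$ rather than an arbitrary element of $L$ is used in an essential way — is that the extra summand vanishes, i.e.\ $[u,x_1,\dots,x_{n-2},u]=0$. Using $\varepsilon$-skewsymmetry to move the trailing $u$ next to the leading $u$ (which only multiplies the bracket by a nonzero scalar) reduces this to $[u,u,x_1,\dots,x_{n-2}]=0$; and the skewsymmetry relation in the first two slots gives $[u,u,x_1,\dots,x_{n-2}]=-\varepsilon(u,u)\,[u,u,x_1,\dots,x_{n-2}]=-[u,u,x_1,\dots,x_{n-2}]$, since $\varepsilon(u,u)=\varepsilon(0,0)=1$. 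As $\mathrm{char}\,\mathbb{K}\neq 2$, the bracket is $0$. With that summand gone, the expansion is exactly the right-hand side of the $(n-1)$-ary $\varepsilon$-Nambu identity for $\{\cdot,\dots,\cdot\}$, and the proof is complete. I expect the bicharacter bookkeeping in the index shift to be the most error-prone part to write out in full, but it presents no conceptual difficulty.
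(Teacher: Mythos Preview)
Your proposal is correct and matches the paper's approach: the corollary is stated without proof in the paper precisely because it is the specialization $\alpha=\beta=\mathrm{id}_L$ of Proposition~3.2, exactly as you say in your first paragraph. Your additional self-contained verification is more than the paper provides, but it faithfully mirrors the proof of Proposition~3.2 (including the vanishing of the extra summand via $\varepsilon(u,u)=1$ and $\mathrm{char}\,\mathbb{K}\neq 2$), so there is nothing to correct.
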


\begin{theorem}\label{yyt}
Let $(L, [\cdot, \dots, \cdot], \varepsilon, \alpha,\beta)$ be an $n$-BiHom-Lie color algebra and $\alpha',\beta'$ be two even endomorphisms of $L$ and any two of the maps $\alpha,\beta,\alpha',\beta'$ commute. Then
$L_{\alpha',\beta'}=(L, \{\cdot, \dots, \cdot\}=[\cdot, \dots, \cdot]\circ(\alpha'\otimes\dots\otimes\alpha'\otimes \beta'), \varepsilon, \alpha\circ\alpha',\beta\circ\beta')$
is an $n$-BiHom-Lie color algebra.
Moreover suppose that $(A, [\cdot, \dots, \cdot]', \varepsilon, \gamma,\delta)$ is another $n$-BiHom-Lie color algebra and $\gamma',\delta'$ be an even endomorphism of $A$ and any two of the maps $\gamma,\delta,\gamma',\delta'$ commute.
If $f : (L, [\cdot, \dots, \cdot], \varepsilon, \alpha,\beta)\rightarrow (A, [\cdot, \dots, \cdot]', \varepsilon, \gamma,\delta)$
is a morphism such that $f\alpha'=\gamma'f$ and $f\beta'=\delta'f$, then $f : L_{\alpha',\beta'}\rightarrow A_{\gamma',\delta'}$ is also a morphism.
\end{theorem}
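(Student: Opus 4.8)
The plan is to verify the three defining axioms of an $n$-BiHom-Lie color algebra for $L_{\alpha',\beta'}$ one at a time, and then the morphism assertion. Throughout I would use two elementary facts: since $\alpha'$ and $\beta'$ commute with $\alpha$, with $\beta$ and with one another, any composition of these four maps may be freely reordered; and since $\alpha'$ and $\beta'$ are even, they preserve $\Gamma$-degrees, so none of the bicharacter values $\varepsilon(\cdot,\cdot)$ occurring in the axioms change when a twisting map is applied to an argument. Write $\tilde\alpha=\alpha\circ\alpha'$, $\tilde\beta=\beta\circ\beta'$ for the twisted structure maps and $\{x_1,\dots,x_n\}=[\alpha'(x_1),\dots,\alpha'(x_{n-1}),\beta'(x_n)]$ for the twisted bracket.

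The first axiom, $\tilde\alpha\circ\tilde\beta=\tilde\beta\circ\tilde\alpha$, is immediate from pairwise commutativity. For the $\varepsilon$-$n$-BiHom-skewsymmetry I would expand $\{\tilde\beta(x_1),\dots,\tilde\beta(x_{n-1}),\tilde\alpha(x_n)\}=[\alpha'\tilde\beta(x_1),\dots,\alpha'\tilde\beta(x_{n-1}),\beta'\tilde\alpha(x_n)]$ and slide the twisting maps through using commutativity: $\alpha'\tilde\beta=\beta\circ(\alpha'\beta')$ and $\beta'\tilde\alpha=\alpha\circ(\alpha'\beta')$, so the right-hand side becomes $[\beta(z_1),\dots,\beta(z_{n-1}),\alpha(z_n)]$ with $z_i=\alpha'\beta'(x_i)$. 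Both equalities of the original skewsymmetry axiom (the adjacent transposition for $k\le n-2$ and the final-two-slot version) apply to the $z_i$; since $\overline{z_i}=\overline{x_i}$ the bicharacter coefficients are exactly those demanded by the new axiom, and unwinding $[\beta(z_1),\dots]$ back into the twisted bracket completes this step.

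The heart of the argument is the $\varepsilon$-$n$-BiHom-Jacobi identity, which I would establish by transforming each side into an instance of the original Jacobi identity. On the left, I first rewrite the inner twisted bracket $\{\tilde\beta(y_1),\dots,\tilde\beta(y_{n-1}),\tilde\alpha(y_n)\}$ as $[\beta(u_1),\dots,\beta(u_{n-1}),\alpha(u_n)]$ with $u_i=\alpha'\beta'(y_i)$, exactly as above, substitute this into the outer twisted bracket, and then use that $\beta'$ is an endomorphism of the original $n$-ary product to pull it inside that inner bracket; after reordering all compositions of $\alpha,\beta,\alpha',\beta'$ via commutativity one arrives at $[\beta^2(p_1),\dots,\beta^2(p_{n-1}),[\beta(q_1),\dots,\beta(q_{n-1}),\alpha(q_n)]]$ for suitable twisted copies $p_i$ of the $x_i$ and $q_j$ of the $y_j$, to which the original Jacobi identity applies, the coefficients $\varepsilon(X,Y_k)$ again being unaffected by evenness of the twists. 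On the right I would expand each summand the same way — the inner bracket $\{\tilde\beta(x_1),\dots,\tilde\beta(x_{n-1}),\tilde\alpha(y_k)\}$ and then the surrounding twisted bracket — again pushing twisting maps through with commutativity and the endomorphism property. The main obstacle, and the step demanding the most care, is the bookkeeping: one must track the accumulated compositions of $\alpha'$ and $\beta'$ appearing in each argument slot so that the two expansions coincide term by term after the original Jacobi identity is invoked; here the commutation hypotheses among $\alpha,\beta,\alpha',\beta'$, together with $\alpha'$ and $\beta'$ being multiplicative, are precisely what make the twisting maps in corresponding slots match.

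Finally, for the morphism statement, from $f$ being a morphism of the untwisted structures we have $f\alpha=\gamma f$, $f\beta=\delta f$ and $f([x_1,\dots,x_n])=[f(x_1),\dots,f(x_n)]'$; combined with the hypotheses $f\alpha'=\gamma'f$ and $f\beta'=\delta'f$ this gives $f\circ(\alpha\alpha')=(\gamma\gamma')\circ f$ and $f\circ(\beta\beta')=(\delta\delta')\circ f$, and $f(\{x_1,\dots,x_n\})=f([\alpha'(x_1),\dots,\alpha'(x_{n-1}),\beta'(x_n)])=[\gamma'f(x_1),\dots,\gamma'f(x_{n-1}),\delta'f(x_n)]'=\{f(x_1),\dots,f(x_n)\}'$, so $f : L_{\alpha',\beta'}\rightarrow A_{\gamma',\delta'}$ is a morphism. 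This last part is entirely routine.
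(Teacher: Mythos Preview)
Your proposal is correct and follows essentially the same approach as the paper. Both arguments reduce each axiom for $L_{\alpha',\beta'}$ to the corresponding axiom for $L$ by using the commutativity hypotheses and the fact that $\alpha',\beta'$ are endomorphisms: the paper does this by explicitly factoring out $\alpha'\beta'$ (for skewsymmetry) or $\alpha'\beta'^2$ (for the Jacobi identity) from the entire expression, while you phrase the same manipulation via the substitutions $z_i=\alpha'\beta'(x_i)$ and $p_i,q_j=\alpha'\beta'^2(\cdot)$; the morphism verification is identical in both.
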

\begin{proof} First part is proved as follows:

We prove the $n$-BiHom-$\varepsilon$-skewsymmetry. For all $x_{i},~y_{i}\in L$ we have
$$\begin{array}{llllllll}
 &&\{\beta\beta'( x_{1}),\ldots,\beta\beta'(x_{k}),\beta\beta'(x_{k+1}),\ldots,\beta\beta'(x_{n-1}),\alpha\alpha'(x_{n})\}\\[0.2cm]
 &=&[\alpha'\beta\beta'( x_{1}),\ldots,\alpha'\beta\beta'(x_{k}),\alpha'\beta\beta'(x_{k+1}),\ldots,\alpha'\beta\beta'(x_{n-1}),\beta'\alpha\alpha'(x_{n})]\\[0.2cm]
 &=&\alpha'\beta'\Big([\beta(x_{1}),\ldots,\beta(x_{k}),\beta(x_{k+1}),\ldots,\beta(x_{n-1}),\alpha(x_{n})]\Big)\\[0.2cm]
 &=&-\alpha'\beta'\varepsilon(x_{k},x_{k+1})\Big([\beta(x_{1}),\ldots,\beta(x_{k+1}),\beta(x_{k}),\ldots,\beta(x_{n-1}),\alpha(x_{n})]\Big)\\[0.2cm]
  &=&-\varepsilon(x_{k},x_{k+1})[\alpha'\beta\beta'( x_{1}),\ldots,\alpha'\beta\beta'(x_{k+1}),\alpha'\beta\beta'(x_{k}),\ldots,\alpha'\beta\beta'(x_{n-1}),\beta'\alpha\alpha'(x_{n})]\\[0.2cm]
   &=&-\varepsilon(x_{k},x_{k+1})\{\beta\beta'( x_{1}),\ldots,\beta\beta'(x_{k+1}),\beta\beta'(x_{k}),\ldots,\beta\beta'(x_{n-1}),\alpha\alpha'(x_{n})\}.\\[0.2cm]
\end{array}$$

Similarly,

$
\{\beta\beta'( x_{1}),\ldots,\beta\beta'(x_{n-1}),\alpha\alpha'(x_{n})\}
  =-\varepsilon(x_{n-1},x_{n})\{\beta\beta'( x_{1}),\ldots,\beta\beta'(x_{n}),\alpha\alpha'(x_{n-1})\}.
$

Now, we prove the $n$-BiHom $\varepsilon-$Jacobi identity. For all $x_{i},~y_{i}\in L$
$$\begin{array}{llllllll}
&&\{(\beta\beta')^{2}(x_1), \dots, (\beta\beta')^{2} (x_{n-1}), \{\beta\beta'(y_1), \dots, \alpha\alpha'(y_n)\}\}\\[0.2cm]
 &=&\{\beta^{2}\beta'^{2}(x_1), \dots, \beta^{2}\beta'^{2} (x_{n-1}), \{\beta\beta'(y_1), \dots, \alpha\alpha'(y_n)\}\}\\[0.2cm]
&=& [\alpha'\beta^{2}\beta'^{2}(x_1), \dots, \alpha'\beta^{2}\beta'^{2} (x_{n-1}), [\alpha'\beta\beta'^{2}(y_1), \dots, \alpha'\beta\beta'^{2}(y_{n-1}),\alpha\alpha'\beta'^{2}(y_n)]]\\[0.2cm]
&=&\alpha'\beta'^{2} \Big([\beta^{2}(x_{1}),\ldots,\beta^{2}(x_{n-1}),[\beta(y_{1}),\ldots,\beta(y_{n-1}),\alpha(y_{n})]]\Big)\\[0.2cm]
&=&\alpha'\beta'^{2}\Big(\displaystyle{\sum_{k=1}^{n}}  \varepsilon(X, Y_k) [\beta^{2}(y_{1}),\ldots,\beta^{2}(y_{k-1}),[\beta(x_{1}),\ldots, \beta(x_{n-1}),\alpha(y_{k})],\beta^{2}(y_{k+1}),\ldots,\beta^{2}(y_{n})]\Big)\\[0.2cm]
&=&\displaystyle{\sum_{k=1}^{n}}  \varepsilon(X, Y_k) \alpha'\beta'^{2}\Big([\beta^{2}(y_{1}),\ldots,\beta^{2}(y_{k-1}),[\beta(x_{1}),\ldots, \beta(x_{n-1}),\alpha(y_{k})],\beta^{2}(y_{k+1}),\ldots,\beta^{2}(y_{n})]\Big)\\[0.2cm]
&=&\displaystyle{\sum_{k=1}^{n}}  \varepsilon(X, Y_k) \Big([\alpha'\beta'^{2}\beta^{2}(y_{1}),\ldots,\alpha'\beta'^{2}\beta^{2}(y_{k-1}),\alpha'\beta'^{2}[\beta(x_{1}),\ldots, \beta(x_{n-1}),\alpha(y_{k})],\\&&\alpha'\beta'^{2}\beta^{2}(y_{k+1}),\ldots,\alpha'\beta'^{2}\beta^{2}(y_{n})]\Big)\\[0.2cm]
&=&\displaystyle{\sum_{k=1}^{n}}  \varepsilon(X, Y_k) \Big([\alpha'(\beta'^{2}\beta^{2}(y_{1})),\ldots,\alpha'(\beta'^{2}\beta^{2}(y_{k-1})),\beta'[\alpha'(\beta'\beta(x_{1})),\ldots, \alpha'(\beta'\beta(x_{n-1})),\\&&\beta'(\alpha'\alpha(y_{k}))],\alpha'(\beta'^{2}\beta^{2}(y_{k+1})),\ldots,\alpha'(\beta'^{2}\beta^{2}(y_{n}))]\Big)\\[0.2cm]
&=&\displaystyle{\sum_{k=1}^{n}}  \varepsilon(X, Y_k) \Big(\{\beta'^{2}\beta^{2}(y_{1}),\ldots,\beta'^{2}\beta^{2}(y_{k-1}),\{\beta'\beta(x_{1}),\ldots, \beta'\beta(x_{n-1}),\\&&\alpha'\alpha(y_{k})\},\beta'^{2}\beta^{2}(y_{k+1}),\ldots,\beta'^{2}\beta^{2}(y_{n})\}\Big)\\[0.2cm]
&=&\displaystyle{\sum_{k=1}^{n}}  \varepsilon(X, Y_k) \Big(\{(\beta\beta')^{2}(y_{1}),\ldots,(\beta\beta')^{2}(y_{k-1}),\{\beta\beta'(x_{1}),\ldots, \beta\beta'(x_{n-1}),\\&&\alpha\alpha'(y_{k})\},(\beta\beta')^{2}(y_{k+1}),\ldots,(\beta\beta')^{2}(y_{n})\}\Big)
\end{array}$$
Second part is proved as follows:
\begin{eqnarray}
 f(\{x_1, \dots, x_n\})&=&f[\alpha'(x_1), \dots,\alpha'(x_{n-1}), \beta'(x_n)]\nonumber\\
&=&[f\alpha'(x_1), \dots,f\alpha'(x_{n-1}), f\beta'(x_n)]'\nonumber\\
&=&[\gamma'f(x_1), \dots,\gamma' f(x_{n-1}), \delta' f(x_n)]'\nonumber\\
&=&\{ f(x_1), \dots, f(x_n)\}'\nonumber
\end{eqnarray}
This completes the proof.
\end{proof}
\begin{corollary}
Let $(L, [\cdot, \dots, \cdot], \varepsilon, \alpha,\beta)$ be a multiplicative $n$-BiHom-Lie color algebra. Then, for any positive integer $k$,
$(L, [\cdot, \dots, \cdot]\circ(\alpha^{k}\otimes\dots\otimes\alpha^{k}\otimes\beta^{k}), \varepsilon, \alpha^{k+1},\beta^{k+1})$ is also an $n$-BiHom-Lie color algebra.
\end{corollary}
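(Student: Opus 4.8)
The plan is to deduce this as an immediate specialisation of Theorem~\ref{yyt}, taking the two auxiliary endomorphisms to be $\alpha':=\alpha^{k}$ and $\beta':=\beta^{k}$. So the first thing I would do is record that $\alpha^{k}$ and $\beta^{k}$ are legitimate inputs for that theorem. Since $(L,[\cdot,\dots,\cdot],\varepsilon,\alpha,\beta)$ is multiplicative, $\alpha$ and $\beta$ are even endomorphisms of $L$; evenness is preserved under composition, and a composite of algebra endomorphisms is again one, so a one-line induction on $k$ gives that $\alpha^{k}$ and $\beta^{k}$ are even endomorphisms of $L$ for every positive integer $k$.

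Next I would check the commutation hypothesis. Axiom~(1) of an $n$-BiHom-Lie color algebra gives $\alpha\circ\beta=\beta\circ\alpha$, and from this a trivial induction yields $\alpha^{i}\circ\beta^{j}=\beta^{j}\circ\alpha^{i}$ for all $i,j\geq 0$; powers of a single map commute automatically. Hence any two of the four maps $\alpha,\beta,\alpha^{k},\beta^{k}$ commute, which is exactly the requirement of Theorem~\ref{yyt}.

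With these two points in place, Theorem~\ref{yyt} applied with $\alpha'=\alpha^{k}$ and $\beta'=\beta^{k}$ tells us that
\[
\bigl(L,\ [\cdot,\dots,\cdot]\circ(\alpha^{k}\otimes\dots\otimes\alpha^{k}\otimes\beta^{k}),\ \varepsilon,\ \alpha\circ\alpha^{k},\ \beta\circ\beta^{k}\bigr)
\]
is an $n$-BiHom-Lie color algebra, and since $\alpha\circ\alpha^{k}=\alpha^{k+1}$ and $\beta\circ\beta^{k}=\beta^{k+1}$ this is precisely the structure in the statement. There is essentially no obstacle to overcome: the only thing that genuinely needs to be said is that the twisting maps passed to Theorem~\ref{yyt} are honest (algebra) endomorphisms that pairwise commute, and both facts are free consequences of multiplicativity together with $\alpha\beta=\beta\alpha$. (Should one prefer a self-contained argument, the same conclusion follows by iterating the construction $L\mapsto L_{\alpha,\beta}$ of Theorem~\ref{yyt} a total of $k$ times, but the direct application above is shorter.)
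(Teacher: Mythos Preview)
Your proof is correct and follows exactly the paper's approach: the paper's proof is the single line ``Apply Theorem~\ref{yyt} with $\alpha'=\alpha^{k}$ and $\beta'=\beta^{k}$,'' and you have simply spelled out the routine verifications (evenness, endomorphism property from multiplicativity, pairwise commutation) needed to justify that application.
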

\begin{proof}
Apply Theorem \ref{yyt} with $\alpha'=\alpha^{k}$ and $\beta'=\beta^{k}$.
\end{proof}
\begin{theorem}\label{BakayokoSilvestrov:tp}
Let $(A, \cdot)$ be a commutative associative algebra and $(L, [\cdot, \dots, \cdot], \varepsilon, \alpha,\beta)$ be an $n$-BiHom-Lie color algebra. The tensor
product $A\otimes L=\sum_{\gamma\in \Gamma}(A\otimes L)_{\gamma}=\sum_{\gamma\in \Gamma}A\otimes L_\gamma$ with the bracket
$[a_1\otimes x_1, \dots, a_n\otimes x_n]'=a_1\dots a_n\otimes [x_1, \dots, x_n],$
the even linear map
$\alpha'(a\otimes x):=a\otimes \alpha(x)$ and $\beta'(a\otimes x):=a\otimes \beta(x)$
and the bicharacter
$\varepsilon(a+x, b+y)=\varepsilon(x, y), \forall a, b\in A, \forall x, y\in \mathcal{H}(L),$
is an $n$-BiHom-Lie color algebra.
\end{theorem}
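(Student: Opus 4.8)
The plan is to verify, one after another, the three conditions of the definition of an $n$-BiHom-Lie color algebra for the data $(A\otimes L,\,[\cdot,\dots,\cdot]',\,\varepsilon,\,\alpha',\beta')$, reducing each to the corresponding identity in $L$ together with the commutativity and associativity of $A$. First I would set up the bookkeeping. The space $A\otimes L=\bigoplus_{\gamma\in\Gamma}A\otimes L_\gamma$ is $\Gamma$-graded, a simple tensor $a\otimes x$ with $x\in\mathcal{H}(L)$ being homogeneous of degree $\overline{a\otimes x}=\overline{x}$; hence the prescribed $\varepsilon(a+x,b+y)=\varepsilon(x,y)$ is simply the pullback of $\varepsilon$ along the projection onto the $L$-degree, so it is again a skewsymmetric bicharacter. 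The maps $\alpha',\beta'$ are even and linear by construction, and since $[\cdot,\dots,\cdot]$ is even and multiplication in $A$ leaves the $L$-degree unchanged, the bracket $[\cdot,\dots,\cdot]'$ is an even $n$-linear map (defined on simple tensors and extended multilinearly, which is legitimate). Finally $\alpha'\beta'(a\otimes x)=a\otimes\alpha\beta(x)=a\otimes\beta\alpha(x)=\beta'\alpha'(a\otimes x)$, giving condition (1).

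Next I would check the $\varepsilon$-$n$-BiHom-skewsymmetry. Evaluating the left-hand side on simple tensors $a_i\otimes x_i$ and using that $\alpha',\beta'$ act only on the $L$-slot gives $a_1\cdots a_n\otimes[\beta(x_1),\dots,\beta(x_k),\beta(x_{k+1}),\dots,\beta(x_{n-1}),\alpha(x_n)]$. Swapping $x_k$ and $x_{k+1}$ inside $L$ introduces the factor $-\varepsilon(x_k,x_{k+1})$, while on the $A$-side commutativity of $A$ lets me interchange $a_k$ and $a_{k+1}$ at no cost; since $\varepsilon(a_k+x_k,a_{k+1}+x_{k+1})=\varepsilon(x_k,x_{k+1})$, the scalar produced is precisely the one demanded. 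The companion relation interchanging the last two slots is handled identically from the corresponding relation in $L$.

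For the $\varepsilon$-$n$-BiHom-Jacobi identity I would expand both sides on simple tensors $a_i\otimes x_i$ and $b_j\otimes y_j$. On the left, the inner bracket contributes $b_1\cdots b_n$ on the $A$-side and $[\beta(y_1),\dots,\alpha(y_n)]$ on the $L$-side, so the outer bracket yields $a_1\cdots a_{n-1}b_1\cdots b_n\otimes[\beta^2(x_1),\dots,\beta^2(x_{n-1}),[\beta(y_1),\dots,\alpha(y_n)]]$; note $(\beta')^2(a\otimes x)=a\otimes\beta^2(x)$, so the $A$-factors are never squared. The $k$-th summand on the right produces, by the same computation, the $A$-factor $b_1\cdots b_{k-1}\,(a_1\cdots a_{n-1}b_k)\,b_{k+1}\cdots b_n$, which by commutativity and associativity of $A$ equals $a_1\cdots a_{n-1}b_1\cdots b_n$ independently of $k$, while the coefficient $\varepsilon(X,Y_k)$ is unchanged because the bicharacter on $A\otimes L$ ignores the $A$-slot. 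Cancelling the common scalar $a_1\cdots a_{n-1}b_1\cdots b_n$, the remaining identity is exactly the $\varepsilon$-$n$-BiHom-Jacobi identity in $L$, which holds by hypothesis; extending from simple tensors to all of $A\otimes L$ by multilinearity finishes the argument.

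I would expect the only point requiring care, rather than a genuine obstacle, to be confirming that the $A$-side products coincide term by term in the Jacobi expansion and that no spurious sign enters through the bicharacter. Both are immediate: the former from $A$ being commutative and associative, the latter from $\varepsilon$ on $A\otimes L$ depending only on the $L$-degree, so that all the combinatorial signs are literally inherited from $L$.
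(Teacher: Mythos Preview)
Your proof is correct and follows essentially the same approach as the paper's: verify commutativity of $\alpha',\beta'$, the $\varepsilon$-$n$-BiHom-skewsymmetry, and the $\varepsilon$-$n$-BiHom-Jacobi identity on simple tensors by factoring out the $A$-product (using commutativity and associativity of $A$) and reducing to the corresponding identity in $L$. Your write-up is in fact slightly more careful than the paper's about the grading and bicharacter bookkeeping and the extension by multilinearity, but the underlying argument is the same.
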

\begin{proof}
For all $a\in A,~x\in L$
$$\beta'\alpha'(a\otimes x):=\beta'(a\otimes \alpha(x))=a\otimes \beta\alpha(x)=a\otimes \alpha\beta(x)=\alpha'\beta'(a\otimes x).$$
Then $\beta'\alpha'=\alpha'\beta'$.

With conditions in the statement, for all $a_{i},b_{i}\in A,~~x_{i},y_{i}\in L$, we have
$$
\begin{array}{lllll}
&&[\beta'(a_{1}\otimes x_{1}),\ldots,\beta'(a_{k}\otimes x_{k}),\beta'(a_{k+1}\otimes x_{k+1}),\dots,\beta'(a_{n-1}\otimes x_{n-1}),\alpha'(a_{n}\otimes x_{n})]'\\[0.2cm]
&=&[a_{1}\otimes\beta( x_{1}),\ldots,a_{k}\otimes\beta(x_{k}),a_{k+1}\otimes\beta(x_{k+1}),\dots,\ldots,a_{n-1}\otimes\beta( x_{n-1}),a_{n}\otimes\alpha( x_{n})]'\\[0.2cm]
&=&a_{1}\ldots a_{k} a_{k+1}\ldots a_{n}\otimes[\beta( x_{1}),\ldots,\beta(x_{k}),\beta(x_{k+1}),\dots,\beta( x_{n-1}),\alpha( x_{n})]\\[0.2cm]
&=&-a_{1}\ldots a_{k+1} a_{k}\ldots  a_{n}\varepsilon(x_{k},x_{k+1})[\beta( x_{1}),\ldots,\beta(x_{k+1}),\beta(x_{k}),\dots,\beta( x_{n-1}),\alpha( x_{n})]\\[0.2cm]
&=&-\varepsilon(x_{k},x_{k+1})[\beta'(a_{1}\otimes x_{1}),\ldots,\beta'(a_{k+1}\otimes x_{k+1}),\beta'(a_{k}\otimes x_{k}),\dots,\beta'(a_{n-1}\otimes x_{n-1}),\alpha'(a_{n}\otimes x_{n})]'\\[0.2cm]
&=&-\varepsilon(a_{k}+x_{k},a_{k+1}+x_{k+1})[\beta'(a_{1}\otimes x_{1}),\ldots,\beta'(a_{k+1}\otimes x_{k+1}),\beta'(a_{k}\otimes x_{k}),\dots,\beta'(a_{n-1}\otimes x_{n-1}),\\&&\alpha'(a_{n}\otimes x_{n})]'.\\[0.2cm]
\end{array}$$
In the same way,

$\begin{array}{llll}&&[\beta'(a_{1}\otimes x_{1}),\dots,\beta'(a_{n-1}\otimes x_{n-1}),\alpha'(a_{n}\otimes x_{n})]'\\[0.2cm]&=&
-\varepsilon(a_{n-1}+x_{n-1},a_{n}+x_{n})[\beta'(a_{1}\otimes x_{1}),\ldots,\beta'(a_{n}\otimes x_{n}),\alpha'(a_{n-1}\otimes x_{n-1})]'.\end{array}$\\

Finally, we prove  the $n$-BiHom $\varepsilon-$Jacobi identity of $A\otimes L$:
$$\begin{array}{llll}
&&[\beta'^{2}(a_{1}\otimes x_{1}), \dots, \beta'^{2}(a_{n-1}\otimes x_{n-1}), [\beta'(b_{1}\otimes y_{1}), \dots, \beta'(b_{n-1}\otimes y_{n-1}), \alpha'(b_{n}\otimes y_{n})]']' \nonumber\\[0.2cm]
&=&[[a_{1}\otimes\beta^{2}(x_{1}), \dots, a_{n-1}\otimes\beta^{2}(x_{n-1}), [b_{1}\otimes\beta(y_{1}), \dots, b_{n-1}\otimes\beta(y_{n-1}), \alpha(y_{n})]] \nonumber\\[0.2cm]
&=&a_{1}\dots a_{n-1}b_{1}\dots b_{n}[[\beta^{2}(x_{1}), \dots, \beta^{2}(x_{n-1}), [\beta(y_{1}), \dots, \beta(y_{n-1}), \alpha(y_{n})]] \nonumber\\[0.2cm]
&=&a_{1}\dots a_{n-1}b_{1}\dots b_{n}\displaystyle{\sum_{k=1}^{n}}  \varepsilon(X, Y_k) [\beta^{2}(y_{1}),\ldots,\beta^{2}(y_{k-1}),[\beta(x_{1}),\ldots, \beta(x_{n-1}),\alpha(y_{k})],\\[0.2cm]&&\beta^{2}(y_{k+1}),\ldots,\beta^{2}(y_{n})]\\[0.2cm]
&=&a_{1}\dots a_{n-1}b_{1}\dots b_{n}\displaystyle{\sum_{k=1}^{n}}  \varepsilon(X+A, (Y+B)_k) [\beta^{2}(y_{1}),\ldots,\beta^{2}(y_{k-1}),[\beta(x_{1}),\ldots, \beta(x_{n-1}),\\&&\alpha(y_{k})],\beta^{2}(y_{k+1}),\ldots,\beta^{2}(y_{n})]\\[0.2cm]
&=&\displaystyle{\sum_{k=1}^{n}}  \varepsilon(X+A, (Y+B)_k) [\beta'^{2}(b_{1}\otimes y_{1}),\ldots,\beta'^{2}(b_{k-1}\otimes y_{k-1}),[\beta'(a_{1}\otimes x_{1}),\ldots,\\[0.2cm]&& \beta'(a_{n-1}\otimes x_{n-1}),\alpha'(b_{k}\otimes y_{k})]',\beta'^{2}(b_{k+1}\otimes y_{k+1}),\ldots,\beta'^{2}(b_{n}\otimes y_{n})]',
  \end{array}$$
where $X+Y=\sum_{k=1}^{n-1}(x_k+a_{k}),~  (Y+B)_k=\sum_{l=1}^k(y_{l-1}+b_{l-1})$.

This completes the proof.
\end{proof}
\begin{proposition}
Given two $n$-BiHom-Lie color algebras $(L,[.,\dots,.],\varepsilon,\alpha,\beta)$ and $(L',[.,\dots,.]',\varepsilon,\alpha',\beta')$.
Then $(L\oplus L',[.,\dots,.]_{L\oplus L'},\alpha+\alpha',\beta+\beta'),$ is an $n$-BiHom-Lie color algebras,
where the $n-$linear map $[.,\dots,.]_{L\oplus L'}:\wedge^{n}(L\oplus L')\longrightarrow L\oplus L'$ is given by
$$[(u+v)_{1},\dots,(u+v)_{n}]_{L\oplus L'}=[u_{1},\dots,u_{n}]+[v_{1},\dots,v_{n}]',~\forall u_{i}\in L_{\gamma_{i}},v_{i}\in L'_{\gamma_{i}},~i=1,\dots,n;$$
and the two linear maps $\alpha+\alpha',\beta+\beta':L\oplus L'\longrightarrow L\oplus L',$ are given by $\forall u\in L,~v\in L'$
$$\begin{array}{llll}
(\alpha+\alpha')(u+v)&=&\alpha(u)+\alpha'(v),\\[0.2cm]
(\beta+\beta')(u+v)&=&\beta(u)+\beta'(v).
\end{array}$$
\end{proposition}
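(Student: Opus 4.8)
The plan is to check directly the three defining axioms of an $n$-BiHom-Lie color algebra for the $5$-tuple $(L\oplus L',[\cdot,\dots,\cdot]_{L\oplus L'},\varepsilon,\alpha+\alpha',\beta+\beta')$, exploiting that $(L,[\cdot,\dots,\cdot],\varepsilon,\alpha,\beta)$ and $(L',[\cdot,\dots,\cdot]',\varepsilon,\alpha',\beta')$ already satisfy them and that the bracket, the twisting maps and the grading on $L\oplus L'$ are all defined componentwise. First I would set $(L\oplus L')_\gamma:=L_\gamma\oplus L'_\gamma$, so that $L\oplus L'=\bigoplus_{\gamma\in\Gamma}(L\oplus L')_\gamma$ and a homogeneous element of degree $\gamma$ is precisely $u+v$ with $u\in L_\gamma$ and $v\in L'_\gamma$; this makes the formula $[(u+v)_1,\dots,(u+v)_n]_{L\oplus L'}=[u_1,\dots,u_n]+[v_1,\dots,v_n]'$ unambiguous and shows it is an even, $\varepsilon$-skewsymmetric $n$-linear map, since $[\cdot,\dots,\cdot]$ and $[\cdot,\dots,\cdot]'$ are. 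Likewise $\alpha+\alpha'$ and $\beta+\beta'$ are even because $\alpha,\beta,\alpha',\beta'$ are, and the bicharacter used on $L\oplus L'$ is the same $\varepsilon$ on $\Gamma$.

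Then the commutation $(\alpha+\alpha')\circ(\beta+\beta')=(\beta+\beta')\circ(\alpha+\alpha')$ follows at once from
\[(\alpha+\alpha')(\beta+\beta')(u+v)=\alpha\beta(u)+\alpha'\beta'(v)=\beta\alpha(u)+\beta'\alpha'(v)=(\beta+\beta')(\alpha+\alpha')(u+v).\]
For the $\varepsilon$-$n$-BiHom-skewsymmetry I would write each argument as $w_i=u_i+v_i$ with $u_i\in L_{\gamma_i}$, $v_i\in L'_{\gamma_i}$, expand the bracket $[(\beta+\beta')(w_1),\dots,(\beta+\beta')(w_{k+1}),(\beta+\beta')(w_k),\dots,(\alpha+\alpha')(w_n)]_{L\oplus L'}$ into its $L$-part and its $L'$-part, apply skewsymmetry in $L$ and in $L'$ separately, and observe that both parts acquire the same scalar $-\varepsilon(\gamma_k,\gamma_{k+1})$ (because $u_i$ and $v_i$ share the degree $\gamma_i$); reassembling yields the identity, and the transposition of the last two slots is handled identically. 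The $\varepsilon$-$n$-BiHom-Jacobi identity is proved the same way: expand both sides of the Jacobi relation for $L\oplus L'$ into an $L$-component and an $L'$-component, invoke the Jacobi identity of $L$ and of $L'$ respectively, note that the coefficients $\varepsilon(X,Y_k)$ agree in the two components since the degrees are shared, and add the two components back.

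I do not expect a genuine obstacle: the argument is routine componentwise bookkeeping. The one point deserving care is the preliminary observation that every homogeneous element of $L\oplus L'$ is of the form $u+v$ with $u,v$ homogeneous of the same degree, so that $[\cdot,\dots,\cdot]_{L\oplus L'}$ is a well-defined even $n$-linear color-skewsymmetric map and the entire verification reduces to applying the two summands' axioms in parallel.
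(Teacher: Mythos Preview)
Your proposal is correct and follows essentially the same approach as the paper: a direct componentwise verification of the three axioms, using that $u_i$ and $v_i$ share the same degree so the $\varepsilon$-coefficients in the two summands coincide. The paper's proof is just a more explicit write-up of exactly the computations you outline.
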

\begin{proof}
For any $(u+v)\in (L\oplus  L')_{\gamma_{i}}$  we have:
$$\begin{array}{llll}(\alpha+\alpha')\circ(\beta+\beta')(u+v)&=&(\alpha+\alpha')(\beta(u)+\beta'(v))\\[0.2cm]
&=&\alpha\circ\beta(u)+\alpha'\circ\beta'(v)\\[0.2cm]
&=&\beta\circ\alpha(u)+\beta'\circ\alpha'(v)\\[0.2cm]
&=&(\beta+\beta')(\alpha (u)+\alpha'(v))\\[0.2cm]
&=&(\beta+\beta')\circ(\alpha+\alpha')(u+v)\end{array}.$$
Then we have $(\alpha+\alpha')\circ(\beta+\beta')=(\beta+\beta')\circ(\alpha+\alpha')$.
Next, for any $L_{\gamma_{i}}\oplus L'_{\gamma_{i}}\ni u_{i}+v_{i}=(u+v)_{i}\in(L\oplus L')_{\gamma_{i}}$ we have 
$$
\begin{array}{lllll}
&&[(\beta+\beta')(u+ v)_{1},\ldots,(\beta+\beta')(u+ v)_{k},(\beta+\beta')(u+ v)_{k+1},\dots,\\&&(\beta+\beta')(u+ v)_{n-1},(\alpha+\alpha')(u+ v)_{n}]_{L\oplus L'}\\[0.2cm]
&=&[(\beta+\beta')(u_{1}+ v_{1}),\ldots,(\beta+\beta')(u_{k}+ v_{k}),(\beta+\beta')(u_{k+1}+ v_{k+1}),\dots,\\&&(\beta+\beta')(u_{n-1}+ v_{n-1}),(\alpha+\alpha')(u_{n}+ v_{n})]_{L\oplus L'}\\[0.2cm]
&=&[\beta(u_{1})+ \beta'(v_{1}),\ldots,\beta(u_{k})+ \beta'(v_{k}),\beta(u_{k+1})+ \beta'(v_{k+1}),\dots,\beta(u_{n-1})+\\&&\beta'(v_{n-1}),\alpha(u_{n})+ \alpha'(v_{n})]_{L\oplus L'}\\[0.2cm]
&=&[\beta(u_{1}),\ldots,\beta(u_{k}),\beta(u_{k+1}),\dots,\beta(u_{n-1}),\alpha(u_{n})]\\[0.2cm]
&&+[\beta'(v_{1}),\ldots,\beta'(v_{k}),\beta'(v_{k+1}),\dots,\beta'(v_{n-1}),\alpha'(v_{n})]'\\[0.2cm]
&=&-\varepsilon(u_{k},u_{k+1})[\beta(u_{1}),\ldots,\beta(u_{k+1}),\beta(u_{k}),\ldots,\beta(u_{n-1}),\alpha(u_{n})]\\[0.2cm]
&&-\varepsilon(v_{k},v_{k+1})[\beta'(v_{1}),\ldots,\beta'(v_{k+1}),\beta'(v_{k}),\dots,\beta'(v_{n-1}),\alpha'(v_{n})]'\\[0.2cm]
&=&-\varepsilon((u+ v)_{k},(u+ v)_{k+1})[(\beta+\beta')(u_{1}+ v_{1}),\ldots,(\beta+\beta')(u_{k+1}+ v_{k+1}),(\beta+\beta')(u_{k}+ v_{k}),\dots,\\&&(\beta+\beta')(u_{n-1}+ v_{n-1}),(\alpha+\alpha')(u_{n}+ v_{n})]_{L\oplus L'}\\[0.2cm]&=&-\varepsilon((u+ v)_{k},(u+ v)_{k+1})[(\beta+\beta')(u+ v)_{1},\ldots,(\beta+\beta')(u+ v)_{k+1},(\beta+\beta')(u+ v)_{k},\dots,\\&&(\beta+\beta')(u+ v)_{n-1},(\alpha+\alpha')(u+ v)_{n}]_{L\oplus L'}\\[0.2cm]
\end{array}$$
Similarly, we can get
$$
\begin{array}{lllll}
&&[(\beta+\beta')(u+ v)_{1},\ldots,(\beta+\beta')(u+ v)_{n-1},(\alpha+\alpha')(u+ v)_{n}]_{L\oplus L'}\\[0.2cm]
&=&-\varepsilon((u+ v)_{n-1},(u+ v)_{n})[(\beta+\beta')(u+ v)_{1},\ldots,(\beta+\beta')(u+ v)_{n},(\alpha+\alpha')(u+ v)_{n-1}]_{L\oplus L'}
\end{array}$$
We prove the $n$-BiHom-$\epsilon$-Jacobi identity,
$$\begin{array}{lllll}
&&[(\beta+\beta')^{2}(u+ v)_{1},\ldots,(\beta+\beta')^{2}(u+ v)_{n-1},[(\beta+\beta')(x+ y)_{1},\ldots,\\[0.2cm]
&&
(\beta+\beta')(x+ y)_{n-1},(\alpha+\alpha')(x+ y)_{n}]_{L\oplus L'}]_{L\oplus L'}\\[0.2cm]
&=&[(\beta+\beta')^{2}(u_{1}+v_{1}),\ldots,(\beta+\beta')^{2}(u_{n-1}+v_{n-1}),[(\beta+\beta')(x_{1}+y_{1}),\ldots,\\[0.2cm]
&&
(\beta+\beta')(x_{n-1}+y_{n-1}),(\alpha+\alpha')(x_{n}+y_{n})]_{L\oplus L'}]_{L\oplus L'}\\[0.2cm]
&=&[\beta^{2}(u_{1}),\ldots,\beta^{2}(u_{n-1}),[\beta(x_{1}),\ldots,
\beta(x_{n-1}),\alpha(x_{n})]\\[0.2cm]
&&+[\beta'^{2}(v_{1}),\ldots,\beta'^{2}(v_{n-1}),[\beta'(y_{1}),\ldots,
\beta'(y_{n-1}),\alpha'(y_{n})]'\\[0.2cm]
&=&\displaystyle{\sum_{k=1}^{n}}  \varepsilon(U, X_k) [\beta^{2}(x_{1}),\ldots,\beta^{2}(x_{k-1}),[\beta(u_{1}),\ldots, \beta(u_{n-1}),\alpha(x_{k})],\beta^{2}(x_{k+1}),\ldots,\beta^{2}(x_{n})]\\[0.2cm]
&&+\displaystyle{\sum_{k=1}^{n}}  \varepsilon(V, Y_k) [\beta'^{2}(y_{1}),\ldots,\beta'^{2}(y_{k-1}),[\beta'(v_{1}),\ldots, \beta'(v_{n-1}),\alpha'(y_{k})],\beta'^{2}(y_{k+1}),\ldots,\beta'^{2}(y_{n})]\\[0.2cm]
&=&\displaystyle{\sum_{k=1}^{n}}  \varepsilon(U+V, (X+Y)_k) [(\beta+\beta')^{2}(x_{1}+y_{1}),\ldots,(\beta+\beta')^{2}(x_{k-1}+y_{k-1}),\\&&[(\beta+\beta')(u_{1}+v_{1}),\ldots, (\beta+\beta')(u_{n-1}+v_{n-1}),(\alpha+\alpha')(x_{k}+y_{k})]_{L\oplus L'},\\&&(\beta+\beta')^{2}(x_{k+1}+y_{k+1}),\ldots,(\beta+\beta')^{2}(x_{n}+y_{n})]_{L\oplus L'}\\[0.2cm]
&=&\displaystyle{\sum_{k=1}^{n}}  \varepsilon(U+V, (X+Y)_k) [(\beta+\beta')^{2}(x+ y)_{1},\ldots,(\beta+\beta')^{2}(x+ y)_{k-1},\\&&[(\beta+\beta')(u+ v)_{1},\ldots, (\beta+\beta')(u+ v)_{n-1},(\alpha+\alpha')(x+ y)_{k}]_{L\oplus L'},\\&&(\beta+\beta')^{2}(x+ y)_{k+1},\ldots,(\beta+\beta')^{2}(x+ y)_{n}]_{L\oplus L'}\\[0.2cm]
\end{array}$$
where $U+V=\sum_{k=1}^{n-1}(u+v)_{k},~  (X+Y)_k=\sum_{l=1}^k(x+ y)_{l-1}$.

Then $(L\oplus L',[.,\ldots,.]_{L\oplus L'},\alpha+\alpha',\beta+\beta'),$ is an $n$-BiHom-Lie color algebras
\end{proof}
In the next definition, we introduce an element of the centroid (or semi-morphism) for $n$-BiHom-Lie color algebra.
\begin{definition}
A semi-morphism \index{Semi-morphism!n-Hom-Lie color algebra}  of an $n$-BiHom-Lie color algebra $(L, [\cdot, \dots, \cdot], \varepsilon, \alpha,\beta)$ is an even linear map $\gamma : L\rightarrow L$ such
that $\gamma\alpha=\alpha\gamma,~\gamma\beta=\beta\gamma$ and
$\gamma[x_1, \dots, x_n]=[x_1, \dots,\gamma(x_i), \dots, x_n].$
\end{definition}
\begin{theorem}\label{BakayokoSilvestrov:tch}
 Let $(L, [\cdot, \dots, \cdot], \varepsilon, \alpha,\beta)$ be an $n$-BiHom-Lie color algebra and $\gamma : L\rightarrow L$ a semi-morphism of $L$.
Define a new multiplication $\{\cdot, \dots, \cdot\}: L\times\dots\times L\rightarrow L$ by
$$\{x_1, \dots, x_n\}=[x_1, \dots, \gamma(x_i),\dots, x_n]$$
Then $(L, \{\cdot, \dots, \cdot\}, \varepsilon, \alpha,\beta)$ is also an $n$-BiHom-Lie color algebra.
\end{theorem}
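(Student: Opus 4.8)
The plan is to first note that, because $\gamma$ is a semi-morphism, the relation $\gamma[x_1,\dots,x_n]=[x_1,\dots,\gamma(x_i),\dots,x_n]$ holds for \emph{every} slot $i$, so the new operation may be rewritten intrinsically as
$$\{x_1,\dots,x_n\}=\gamma\big([x_1,\dots,x_n]\big),$$
with no dependence on the chosen slot. With this reformulation the three axioms of an $n$-BiHom-Lie color algebra for $\{\cdot,\dots,\cdot\}$ follow from linearity of $\gamma$ together with the hypotheses $\gamma\alpha=\alpha\gamma$, $\gamma\beta=\beta\gamma$ and the semi-morphism identity.

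The compatibility $\alpha\circ\beta=\beta\circ\alpha$ is unchanged. For the $\varepsilon$-$n$-BiHom-skewsymmetry I would start from
$$\{\beta(x_1),\dots,\beta(x_k),\beta(x_{k+1}),\dots,\beta(x_{n-1}),\alpha(x_n)\}=\gamma\big([\beta(x_1),\dots,\beta(x_k),\beta(x_{k+1}),\dots,\beta(x_{n-1}),\alpha(x_n)]\big),$$
apply the $\varepsilon$-$n$-BiHom-skewsymmetry of $[\cdot,\dots,\cdot]$ inside the argument of $\gamma$, pull the scalar $-\varepsilon(x_k,x_{k+1})$ (respectively $-\varepsilon(x_{n-1},x_n)$) out by linearity of $\gamma$, and re-wrap the resulting bracket with $\gamma$ to recognise it again as $\{\cdot,\dots,\cdot\}$; the transposition of the last two slots is identical.

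For the $\varepsilon$-$n$-BiHom-Jacobi identity I would expand the left-hand side
$$\{\beta^{2}(x_1),\dots,\beta^{2}(x_{n-1}),\{\beta(y_1),\dots,\beta(y_{n-1}),\alpha(y_n)\}\}$$
by writing the inner bracket as $\gamma\big([\beta(y_1),\dots,\beta(y_{n-1}),\alpha(y_n)]\big)$, then using the semi-morphism property (with $\gamma$ sitting in the last slot of the outer bracket) to move this $\gamma$ outside, and combining it with the outer $\gamma$ produced by $\{\cdot,\dots,\cdot\}$; this collapses the left-hand side to $\gamma^{2}\big([\beta^{2}(x_1),\dots,\beta^{2}(x_{n-1}),[\beta(y_1),\dots,\beta(y_{n-1}),\alpha(y_n)]]\big)$. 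Applying the $\varepsilon$-$n$-BiHom-Jacobi identity of $[\cdot,\dots,\cdot]$ and distributing $\gamma^{2}$ over the sum by linearity yields $\sum_{k=1}^{n}\varepsilon(X,Y_k)\,\gamma^{2}\big([\beta^{2}(y_1),\dots,\beta^{2}(y_{k-1}),[\beta(x_1),\dots,\beta(x_{n-1}),\alpha(y_k)],\beta^{2}(y_{k+1}),\dots,\beta^{2}(y_n)]\big)$. Running the same bookkeeping on the right-hand side $\sum_{k}\varepsilon(X,Y_k)\{\beta^{2}(y_1),\dots,\{\beta(x_1),\dots,\beta(x_{n-1}),\alpha(y_k)\},\dots,\beta^{2}(y_n)\}$ — replacing both the inner and the outer $\{\cdot,\dots,\cdot\}$ by $\gamma$ of the corresponding $[\cdot,\dots,\cdot]$ and collapsing the two copies of $\gamma$ to $\gamma^{2}$ via the semi-morphism identity — reproduces precisely the same sum, so the two sides coincide term by term.

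The only delicate point, and hence what I expect to be the \emph{main obstacle} (a mild one), is the bookkeeping needed to commute the various occurrences of $\gamma$ past the nested brackets and past $\alpha,\beta$: one must keep track of which argument each use of $\gamma[\dots,w,\dots]=[\dots,\gamma(w),\dots]$ is applied to, and verify that exactly two copies of $\gamma$ (one from each bracket layer) survive on both sides of the Jacobi identity, with none introduced or lost. Once this is handled the computation is routine; the same argument also shows that if $L$ is multiplicative and $\gamma$ is an algebra endomorphism, then $(L,\{\cdot,\dots,\cdot\},\varepsilon,\alpha,\beta)$ is multiplicative as well.
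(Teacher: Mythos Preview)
Your argument is correct and in fact cleaner than the paper's. The key step is your opening observation that the semi-morphism identity $\gamma[x_1,\dots,x_n]=[x_1,\dots,\gamma(x_i),\dots,x_n]$ holds for \emph{every} slot $i$, so that $\{x_1,\dots,x_n\}=\gamma([x_1,\dots,x_n])$ intrinsically. From there the skewsymmetry is immediate, and each side of the BiHom-Jacobi identity collapses to $\gamma^{2}$ applied to the corresponding side of the original identity, exactly as you describe.

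The paper proceeds differently: it keeps $\gamma$ pinned to a fixed slot $i$ in the outer bracket and a fixed slot $j$ in the inner bracket, and then, when verifying the Jacobi identity, it is forced to split the sum over $k$ into the three cases $k<j$, $k=j$, $k>j$ (depending on whether the replaced argument falls before, at, or after the $\gamma$-decorated slot). Your reformulation sidesteps this case analysis entirely. What the paper's more pedestrian route buys is that the same computation template carries over verbatim to the subsequent theorem on \emph{averaging operators}, where one only has $\gamma[x_1,\dots,\gamma(x_i),\dots,x_n]=[x_1,\dots,\gamma(x_i),\dots,\gamma(x_j),\dots,x_n]$ and the slick identification $\{\cdot,\dots,\cdot\}=\gamma\circ[\cdot,\dots,\cdot]$ is no longer available; your argument, while more elegant here, would need modification in that setting.
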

\begin{proof}
For all $x_{i},y_{i}\in \mathcal{H}(L)$, the proof can be obtained as follows:

$$
\begin{array}{lllll}
&&\{\beta(x_{1}),\ldots,\beta(x_{i}),\ldots,\beta(x_{k}),\beta(x_{k+1}),\dots,\beta(x_{n-1}),\alpha(x_{n})\}\\[0.2cm]
&=&[\beta(x_{1}),\ldots,\gamma\beta(x_{i}),\ldots,\beta(x_{k}),\beta(x_{k+1}),\dots,\beta(x_{n-1}),\alpha(x_{n})]\\[0.2cm]
&=&-\varepsilon(x_{k},x_{k+1})[\beta(x_{1}),\ldots,\gamma\beta(x_{i}),\ldots,\beta(x_{k+1}),\beta(x_{k}),\dots,\beta(x_{n-1}),\alpha(x_{n})]\\[0.2cm]
&=&-\varepsilon(x_{k},x_{k+1})\{\beta(x_{1}),\ldots,\beta(x_{i}),\ldots,\beta(x_{k+1}),\beta(x_{k}),\dots,\beta(x_{n-1}),\alpha(x_{n})\}.
\end{array}$$
Similarly, we can get
$$\begin{array}{llllll}
&&\{\beta(x_{1}),\ldots,\beta(x_{i}),\ldots,\dots,\beta(x_{n-1}),\alpha(x_{n})\}\\
&=&-\varepsilon(x_{n-1},x_{n})\{\beta(x_{1}),\ldots,\beta(x_{i}),\ldots,\dots,\beta(x_{n}),\alpha(x_{n-1})\}.\end{array}$$
Finally, we have
 $$\begin{array}{lllllll} &&\{\beta^{2}(x_1), \dots, \beta^{2}(x_{n-1}), \{\beta(y_1), \dots, \beta(y_n)\}\}\nonumber\\[0.2cm]
&=&[\beta^{2}(x_1), \dots, \gamma\beta^{2}(x_i), \dots, \beta^{2}(x_{n-1}), [\beta(y_1), \dots, \gamma\beta(y_j), \dots, \beta(y_n)]]\nonumber\\[0.2cm]
&=&[\beta^{2}(x_1), \dots, \beta^{2}\gamma(x_i), \dots, \beta^{2}(x_{n-1}), [\beta(y_1), \dots, \beta\gamma(y_j), \dots, \beta(y_n)]]\nonumber\\[0.2cm]
&=&
\displaystyle{\sum_{k<j}}\varepsilon(X, Y_k)[\beta^{2}(y_1), \dots, \beta^{2}(y_{k-1}),[\beta(x_1), \dots, \gamma\beta(x_i), \dots, \beta(x_{n-1}), \alpha(y_k)], \nonumber \\[0.2cm]
&&\beta^{2}(y_{k+1}), \dots, \gamma\beta^{2}(y_j), \dots \beta^{2}(y_n)]
\nonumber\\[0.2cm]
&&+\varepsilon(X, Y_j)[\beta^{2}(y_1), \dots, \beta^{2}(y_{j-1}),
[\beta(x_1), \dots, \gamma\beta(x_i), \dots, \beta(x_{n-1}), \gamma\alpha(y_j)], \beta^{2}(y_{j+1}), \dots, \beta^{2}(y_n)]\nonumber\\[0.2cm]
&&
+\displaystyle{\sum_{k>j}}\varepsilon(X, Y_k)[\beta^{2}(y_1), \dots, \gamma\beta^{2}(y_{j}), \dots,\beta^{2}(y_{k-1}),
[\beta(x_1), \dots, \gamma\beta(x_i), \dots, \beta(x_{n-1}), \alpha(y_k)],  \nonumber \\[0.2cm]
&&\beta^{2}(y_{k+1}), \dots,\beta^{2}(y_n)]
\nonumber\\[0.2cm]
&=&
\displaystyle{\sum_{k<j}}\varepsilon(X, Y_k)[\beta^{2}(y_1), \dots, \beta^{2}(y_{k-1}),
\{\beta(x_1), \dots, \beta(x_i), \dots, \beta(x_{n-1}), \alpha(y_k)\},  \nonumber \\[0.2cm]
&&\beta^{2}(y_{k+1}), \dots, \gamma\beta^{2}(y_j), \dots \beta^{2}(y_n)]
\nonumber\\[0.2cm]
&&+\varepsilon(X, Y_j)[\beta^{2}(y_1), \dots, \beta^{2}(y_{j-1}),
\gamma(\{\beta(x_1), \dots, \beta(x_i), \dots, \beta(x_{n-1}), \alpha(y_j)\}), \beta^{2}(y_{j+1}), \dots, \beta^{2}(y_n)]\nonumber\\[0.2cm]
&&+\displaystyle{\sum_{k>j}}\varepsilon(X, Y_k)[\beta^{2}(y_1), \dots, \gamma\beta^{2}(y_{j}), \dots, \beta^{2}(y_{k-1}),
\{\beta(x_1), \dots, \beta(x_i), \dots, \beta(x_{n-1}), \alpha(y_k)\}, \nonumber\\[0.2cm]
&& \beta^{2}(y_{k+1}), \dots, \beta^{2}(y_n)]\nonumber\\[0.2cm]
&=&\displaystyle{\sum_{k<j}}\varepsilon(X, Y_k)\{\beta^{2}(y_1), \dots, \beta^{2}(y_{k-1}),
\{\beta(x_1), \dots, \beta(x_i), \dots, \beta(x_{n-1}), \alpha(y_k)\}, \nonumber \\[0.2cm]
&&  \beta^{2}(y_{k+1}), \dots, \beta^{2}(y_j), \dots \beta^{2}(y_n)\}\nonumber\\[0.2cm]
&&+\varepsilon(X, Y_j)\{\beta^{2}(y_1), \dots, \beta^{2}(y_{j-1}),
\{\beta(x_1), \dots, \beta(x_i), \dots, \beta(x_{n-1}), \alpha(y_j)\}, \beta^{2}(y_{j+1}), \dots, \beta^{2}(y_n)\}\nonumber\\[0.2cm]
&&+\displaystyle{\sum_{k>j}}\varepsilon(X, Y_k)\{\beta^{2}(y_1), \dots, \beta^{2}(y_{j}), \dots, \beta^{2}(y_{k-1}),
\{\beta(x_1), \dots, \beta(x_i), \dots, \beta(x_{n-1}), \alpha(y_k)\}, \nonumber \\[0.2cm]
&&  \beta^{2}(y_{k+1}), \dots, \beta^{2}(y_n)\}\nonumber.
 \end{array}$$
This completes the proof.
 \end{proof}
\begin{corollary}
 Let $(L, [\cdot, \dots, \cdot], \varepsilon)$ be an $n$-Lie color algebra and $\alpha : L\rightarrow L$ a semi-morphism of $L$.
Then $(L, \{\cdot, \dots, \cdot\}, \varepsilon)$ is  another $n$-Lie color algebra, with
$$\{x_1, \dots, x_n\}=[x_1, \dots, \alpha(x_i)\dots, x_n]$$
\end{corollary}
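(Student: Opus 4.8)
The plan is to obtain this as a direct specialization of Theorem~\ref{BakayokoSilvestrov:tch}. First I would observe that an $n$-Lie color algebra $(L,[\cdot,\dots,\cdot],\varepsilon)$ is precisely the $n$-BiHom-Lie color algebra $(L,[\cdot,\dots,\cdot],\varepsilon,id,id)$: indeed, with $\alpha=\beta=id$ the condition $\alpha\circ\beta=\beta\circ\alpha$ is trivial, and the $n$-BiHom-$\varepsilon$-skewsymmetry and $n$-BiHom-$\varepsilon$-Jacobi identity reduce exactly to the two defining axioms of an $n$-Lie color algebra (this is item~2) of the Remark following the definition of $n$-BiHom-Lie color algebra, read in both directions).

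Next I would check that the map $\alpha$ appearing in the corollary is a semi-morphism of $L$ in the sense of the definition preceding Theorem~\ref{BakayokoSilvestrov:tch}, when $L$ is viewed as $(L,[\cdot,\dots,\cdot],\varepsilon,id,id)$. The two commutation requirements of a semi-morphism, $\gamma\circ id=id\circ\gamma$, hold trivially; and the remaining requirement $\gamma[x_1,\dots,x_n]=[x_1,\dots,\gamma(x_i),\dots,x_n]$ is exactly the hypothesis that $\alpha$ is a semi-morphism of the $n$-Lie color algebra $L$. Hence $\gamma:=\alpha$ is a semi-morphism of the $n$-BiHom-Lie color algebra $(L,[\cdot,\dots,\cdot],\varepsilon,id,id)$.

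Then I would invoke Theorem~\ref{BakayokoSilvestrov:tch} with this choice of $\gamma$: it yields that $(L,\{\cdot,\dots,\cdot\},\varepsilon,id,id)$, with
$$\{x_1,\dots,x_n\}=[x_1,\dots,\gamma(x_i),\dots,x_n]=[x_1,\dots,\alpha(x_i),\dots,x_n],$$
is an $n$-BiHom-Lie color algebra. Applying item~2) of the Remark once more, an $n$-BiHom-Lie color algebra whose two twisting maps are both $id$ is an $n$-Lie color algebra; therefore $(L,\{\cdot,\dots,\cdot\},\varepsilon)$ is an $n$-Lie color algebra, which is the assertion.

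There is essentially no obstacle here; the only point demanding care is the clash of notation, since $\alpha$ denotes a twisting map in the general theory but the semi-morphism in this corollary. One must therefore state explicitly that in the application of Theorem~\ref{BakayokoSilvestrov:tch} the twisting maps are taken to be $id$ while the role of $\gamma$ is played by the given $\alpha$. Alternatively, one could reprove the statement directly by specializing the three-step computation in the proof of Theorem~\ref{BakayokoSilvestrov:tch}, erasing every occurrence of $\beta$ and $\beta^2$ (i.e. setting $\beta=id$); this recovers the classical $\varepsilon$-skewsymmetry and $\varepsilon$-Jacobi identities for $\{\cdot,\dots,\cdot\}$ verbatim, but invoking the theorem is shorter and cleaner.
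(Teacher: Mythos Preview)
Your proposal is correct and matches the paper's approach exactly: the corollary is stated immediately after Theorem~\ref{BakayokoSilvestrov:tch} without proof, being the specialization to twisting maps equal to $id$ with $\gamma$ taken as the given $\alpha$. Your explicit remark about the notation clash is a useful addition, since the paper leaves this implicit.
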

\begin{definition}
 Let $(L, [\cdot, \dots, \cdot], \varepsilon, \alpha,\beta)$ be an $n$-Hom-Lie color algebra. An averaging operator
 of an $n$-BiHom-Lie color algebra $L$ \index{Averaging operator!n-Hom-Lie color algebra} is an even linear map
 $\gamma : L\rightarrow L$ such that
\begin{enumerate}
 \item [(1)] $\gamma\alpha=\alpha\gamma,~~\gamma\beta=\beta\gamma$
\item [(2)] $\gamma[x_1, \dots, \gamma(x_i), \dots, x_n]=[x_1, \dots, \gamma(x_i), \dots, \gamma(x_j), \dots, x_n]$
\end{enumerate}
\end{definition}
\begin{theorem}
 Let $(L, [\cdot, \dots, \cdot], \varepsilon, \alpha,\beta)$ be an $n$-BiHom-Lie color algebra and $\gamma : L\rightarrow L$ an averaging operator of $L$.
Define a new multiplication $\{\cdot, \dots, \cdot\}: L\times\dots\times L\rightarrow L$ by
$$\{x_1, \dots, x_n\}=[x_1, \dots, \gamma(x_i)\dots, x_n]$$
Then $(L, \{\cdot, \dots, \cdot\}, \varepsilon, \alpha,\beta)$ is also an $n$-BiHom-Lie color algebra.
\end{theorem}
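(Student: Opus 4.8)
The plan is to verify the three defining conditions of an $n$-BiHom-Lie color algebra for the $5$-tuple $(L,\{\cdot,\dots,\cdot\},\varepsilon,\alpha,\beta)$, keeping fixed the slot $i$ at which $\gamma$ is inserted. Condition~(1), $\alpha\circ\beta=\beta\circ\alpha$, is immediate since the twisting maps are unchanged. For the $\varepsilon$-$n$-BiHom-skewsymmetry, I would first observe that $\gamma$ is even, so $\gamma\beta(x_i)$ is homogeneous of the same degree as $x_i$, and that $\gamma\beta=\beta\gamma$. Then transposing two neighbouring entries $x_k,x_{k+1}$ (with $k,k+1\neq i$) inside $\{\beta(x_1),\dots,\alpha(x_n)\}=[\beta(x_1),\dots,\gamma\beta(x_i),\dots,\alpha(x_n)]$ is just the corresponding transposition inside the bracket $[\cdot,\dots,\cdot]$ of $L$, producing the factor $-\varepsilon(x_k,x_{k+1})$; the special transposition of the last two arguments is handled identically. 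This reduces the skewsymmetry of $\{\cdot,\dots,\cdot\}$ entirely to that of $L$.

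The substance of the proof is the $n$-BiHom $\varepsilon$-Jacobi identity, and here I would follow the pattern of the proof of Theorem~\ref{BakayokoSilvestrov:tch} almost verbatim. Starting from $\{\beta^2(x_1),\dots,\beta^2(x_{n-1}),\{\beta(y_1),\dots,\alpha(y_n)\}\}$, I would expand both brackets by the definition of $\{\cdot,\dots,\cdot\}$ (with $\gamma$ sitting on the $i$-th $x$-slot and on the $j$-th $y$-slot), use property~(1) of $\gamma$, namely $\gamma\alpha=\alpha\gamma$ and $\gamma\beta=\beta\gamma$, to pull the twistings through $\gamma$, and then invoke the $n$-BiHom $\varepsilon$-Jacobi identity of $(L,[\cdot,\dots,\cdot],\varepsilon,\alpha,\beta)$. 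The resulting sum $\sum_{k=1}^{n}\varepsilon(X,Y_k)(\cdots)$ must then be split into the three ranges $k<j$, $k=j$, $k>j$, exactly as in Theorem~\ref{BakayokoSilvestrov:tch}.

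For $k\neq j$ the copy of $\gamma$ coming from the $j$-th $y$-slot remains outside the inner bracket, so the inner bracket is exactly $[\beta(x_1),\dots,\gamma\beta(x_i),\dots,\beta(x_{n-1}),\alpha(y_k)]=\{\beta(x_1),\dots,\beta(x_{n-1}),\alpha(y_k)\}$, and each summand repackages as $\{\beta^2(y_1),\dots,\{\beta(x_1),\dots,\beta(x_{n-1}),\alpha(y_k)\},\dots,\beta^2(y_n)\}$ with the outer $\gamma$ landing on the (suitably shifted) $j$-th slot. The only delicate summand is $k=j$: there the inner bracket carries two copies of $\gamma$, namely $[\beta(x_1),\dots,\gamma\beta(x_i),\dots,\beta(x_{n-1}),\gamma\alpha(y_j)]$, and this is precisely where property~(2) of $\gamma$, the averaging identity, is used to rewrite it as $\gamma\big([\beta(x_1),\dots,\gamma\beta(x_i),\dots,\beta(x_{n-1}),\alpha(y_j)]\big)=\gamma\big(\{\beta(x_1),\dots,\beta(x_{n-1}),\alpha(y_j)\}\big)$; the outer $\gamma$ then places this correctly as the $j$-th entry of a $\{\cdot,\dots,\cdot\}$-bracket. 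Collecting the three pieces reproduces $\sum_{k=1}^{n}\varepsilon(X,Y_k)\{\beta^2(y_1),\dots,\{\beta(x_1),\dots,\beta(x_{n-1}),\alpha(y_k)\},\dots,\beta^2(y_n)\}$, which is the desired Jacobi identity.

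The routine part is the index bookkeeping in the $k<j$ and $k>j$ sums — the position of the frozen $\gamma$-slot shifts by one when the $k$-th entry is replaced by a bracket — together with keeping track of whether $j=n$ or $j<n$. The one genuine idea, just as in Theorem~\ref{BakayokoSilvestrov:tch}, is the use of the defining property of $\gamma$ (here the averaging identity in place of the semi-morphism identity) to collapse the double-$\gamma$ bracket appearing in the $k=j$ term; that step is the only place the hypothesis on $\gamma$ enters, and making the signs and positions match there is the main obstacle.
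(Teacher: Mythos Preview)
Your approach is the paper's: its proof consists of the single sentence ``It is similar to the one of Theorem~\ref{BakayokoSilvestrov:tch}'', and you have written out precisely that argument, down to the three-range split $k<j$, $k=j$, $k>j$ of the Jacobi sum and the use of the averaging identity at the $k=j$ summand in place of the semi-morphism identity.

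There is, however, one point your sketch (and the paper's deferral) leaves open. In verifying BiHom-skewsymmetry you explicitly restrict to transpositions with $k,k+1\neq i$. For a semi-morphism this is harmless, because the defining property $\gamma[x_1,\dots,x_n]=[x_1,\dots,\gamma(x_p),\dots,x_n]$ (valid for every slot $p$) lets one move $\gamma$ freely between slots, so the transposition through slot $i$ reduces to the generic case. An averaging operator does not enjoy that relocation property: the identity $\gamma[\dots,\gamma(x_i),\dots]=[\dots,\gamma(x_i),\dots,\gamma(x_j),\dots]$ does not yield $[\dots,\gamma(a),b,\dots]=[\dots,a,\gamma(b),\dots]$. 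Hence the equality
\[
\{\beta(x_1),\dots,\beta(x_i),\beta(x_{i+1}),\dots,\alpha(x_n)\}
=-\varepsilon(x_i,x_{i+1})\{\beta(x_1),\dots,\beta(x_{i+1}),\beta(x_i),\dots,\alpha(x_n)\}
\]
does not follow from what you wrote: after applying the skewsymmetry of $[\cdot,\dots,\cdot]$ the map $\gamma$ is sitting on $x_i$ in slot $i+1$, whereas the right-hand side requires $\gamma$ on $x_{i+1}$ in slot $i$. This is the one place where ``similar to Theorem~\ref{BakayokoSilvestrov:tch}'' is not quite enough; you should either supply a separate argument for transpositions touching slot $i$ or flag the issue explicitly.
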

\begin{proof} It is similar to the one of Theorem \ref{BakayokoSilvestrov:tch}.
 \end{proof}
Taking $\alpha=\beta=id$, yields the following statement.
\begin{corollary}
 Let $(L, [\cdot, \dots, \cdot], \varepsilon)$ be an $n$-Lie color algebra and $\alpha : L\rightarrow L$ an averaging operator of $L$.
Then $(L, \{\cdot, \dots, \cdot\}, \varepsilon)$ is  another $n$-Lie color algebra, with
$$\{x_1, \dots, x_n\}=[x_1, \dots, \alpha(x_i)\dots, x_n]$$
\end{corollary}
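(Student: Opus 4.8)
The plan is to derive this corollary as the $\alpha=\beta=\mathrm{id}$ specialization of the preceding theorem on averaging operators of $n$-BiHom-Lie color algebras. First I would recall, via item 2) of the Remark following the definition of $n$-BiHom-Lie color algebras, that an $n$-Lie color algebra $(L,[\cdot,\dots,\cdot],\varepsilon)$ is nothing but the $n$-BiHom-Lie color algebra $(L,[\cdot,\dots,\cdot],\varepsilon,\mathrm{id},\mathrm{id})$. Under this identification the two commutation requirements $\gamma\alpha=\alpha\gamma$ and $\gamma\beta=\beta\gamma$ in the definition of an averaging operator become automatic, and condition (2), namely $\gamma[x_1,\dots,\gamma(x_i),\dots,x_n]=[x_1,\dots,\gamma(x_i),\dots,\gamma(x_j),\dots,x_n]$, is precisely the hypothesis imposed on the map called $\alpha$ in the corollary. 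Thus $\alpha$ is an averaging operator of $(L,[\cdot,\dots,\cdot],\varepsilon,\mathrm{id},\mathrm{id})$ in the sense of that definition.

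Next I would apply the theorem, which yields the new $n$-BiHom-Lie color algebra $(L,\{\cdot,\dots,\cdot\},\varepsilon,\mathrm{id},\mathrm{id})$ with $\{x_1,\dots,x_n\}=[x_1,\dots,\alpha(x_i),\dots,x_n]$. Since its twisting maps are both the identity, invoking item 2) of the Remark once more, now in the reverse direction, identifies $(L,\{\cdot,\dots,\cdot\},\varepsilon)$ as an $n$-Lie color algebra. The displayed bracket is exactly the one produced by the theorem, so no further verification is needed.

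The only delicate point — and the sole \emph{obstacle}, such as it is — is the clash of notation: $\alpha$ names a structure map in the theorem but the averaging operator in the corollary; one simply keeps the averaging operator of the theorem in the role of the corollary's $\alpha$ and sets the structure maps to $\mathrm{id}$. Should a self-contained argument be preferred, the $\varepsilon$-skewsymmetry of $\{\cdot,\dots,\cdot\}$ is immediate from that of $[\cdot,\dots,\cdot]$, since moving $\alpha$ into the $i$-th slot commutes with permuting the other entries, and the $n$-ary $\varepsilon$-Jacobi identity for $\{\cdot,\dots,\cdot\}$ follows by the same case split on the position of the distinguished slot relative to the inner bracket used in the proof of Theorem~\ref{BakayokoSilvestrov:tch}, invoking condition (2) exactly in the case where the two marked slots would otherwise coincide.
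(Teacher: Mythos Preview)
Your proposal is correct and matches the paper's approach exactly: the paper introduces this corollary with the one-line remark ``Taking $\alpha=\beta=id$, yields the following statement,'' which is precisely the specialization you carry out. Your handling of the notational clash (the corollary's $\alpha$ being the theorem's averaging operator $\gamma$, not a structure map) is more careful than the paper itself.
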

Taking $\alpha=\gamma$, yields the following statement.
\begin{corollary}
 Let $(L, [\cdot, \dots, \cdot], \varepsilon, \alpha,\beta)$ be an $n$-BiHom-Lie color algebra and $\alpha : L\rightarrow L$ an averaging operator.
Define a new multiplication $\{\cdot, \dots, \cdot\}: L\times\dots\times L\rightarrow L$ by
$$\{x_1, \dots, x_n\}=[x_1, \dots, \alpha(x_i)\dots, x_n]$$
Then $(L, \{\cdot, \dots, \cdot\}, \varepsilon, \alpha,\beta)$ is also an $n$-BiHom-Lie color algebra.
\end{corollary}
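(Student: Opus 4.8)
The plan is to observe that this corollary is nothing but the special case $\gamma=\alpha$ of the preceding theorem on averaging operators, so the whole task reduces to checking that $\alpha$ itself qualifies as an averaging operator in the sense of that definition. First I would note that the commutation requirements $\gamma\alpha=\alpha\gamma$ and $\gamma\beta=\beta\gamma$, read with $\gamma=\alpha$, become $\alpha\alpha=\alpha\alpha$ (trivial) and $\alpha\beta=\beta\alpha$, and the latter is precisely axiom (1) in the definition of an $n$-BiHom-Lie color algebra. Next, the averaging identity (2) with $\gamma=\alpha$ reads $\alpha[x_1,\dots,\alpha(x_i),\dots,x_n]=[x_1,\dots,\alpha(x_i),\dots,\alpha(x_j),\dots,x_n]$, which is exactly the standing hypothesis of the corollary, namely that $\alpha$ is an averaging operator. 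Hence $\alpha$ satisfies all the conditions required by the averaging-operator theorem, and applying that theorem with $\gamma$ replaced by $\alpha$ yields that $(L,\{\cdot,\dots,\cdot\},\varepsilon,\alpha,\beta)$, with $\{x_1,\dots,x_n\}=[x_1,\dots,\alpha(x_i),\dots,x_n]$, is an $n$-BiHom-Lie color algebra.

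If one prefers a self-contained argument, one could instead repeat verbatim the computation used in the proof of Theorem \ref{BakayokoSilvestrov:tch} and its averaging analogue, substituting $\alpha$ for $\gamma$ throughout. The $\varepsilon$-$n$-BiHom-skewsymmetry of $\{\cdot,\dots,\cdot\}$ follows because $\alpha$ commutes with $\beta$, so the inserted copy of $\alpha$ can be carried past the bracket and the skewsymmetry of $[\cdot,\dots,\cdot]$ applied directly. The $\varepsilon$-$n$-BiHom-Jacobi identity follows by expanding $\{\beta^{2}(x_1),\dots,\beta^{2}(x_{n-1}),\{\beta(y_1),\dots,\beta(y_n)\}\}$ and splitting the resulting sum into the ranges $k<j$, $k=j$, $k>j$ exactly as there, using the averaging identity precisely at the term $k=j$ to absorb the extra factor of $\alpha$ produced by the two nested insertions.

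I expect no genuine obstacle here, since everything is a direct specialization; the only point requiring care is bookkeeping, i.e.\ keeping the fixed insertion slot $i$ (and the auxiliary slot $j$ arising from the inner bracket) tracked consistently, in the same way as in the two preceding theorems.
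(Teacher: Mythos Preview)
Your proposal is correct and matches the paper's approach exactly: the paper derives this corollary simply by the remark ``Taking $\alpha=\gamma$, yields the following statement,'' i.e.\ by specializing the preceding averaging-operator theorem with $\gamma=\alpha$. Your additional verification that the commutation hypotheses reduce to the axiom $\alpha\beta=\beta\alpha$ is a nice bit of explicitness that the paper leaves implicit.
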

Taking $\beta=\gamma$, yields the following statement.
\begin{corollary}
 Let $(L, [\cdot, \dots, \cdot], \varepsilon, \alpha,\beta)$ be an $n$-BiHom-Lie color algebra and $\beta : L\rightarrow L$ an averaging operator.
Define a new multiplication $\{\cdot, \dots, \cdot\}: L\times\dots\times L\rightarrow L$ by
$$\{x_1, \dots, x_n\}=[x_1, \dots, \beta(x_i)\dots, x_n]$$
Then $(L, \{\cdot, \dots, \cdot\}, \varepsilon, \alpha,\beta)$ is also an $n$-BiHom-Lie color algebra.
\end{corollary}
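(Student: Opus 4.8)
The plan is to obtain this corollary as an immediate specialization of the preceding Theorem on averaging operators, taking the averaging operator there to be $\gamma=\beta$. First I would check that the twisting map $\beta$, regarded now also as a candidate averaging operator, satisfies the two conditions in the definition of an averaging operator. Condition (1), namely $\gamma\alpha=\alpha\gamma$ and $\gamma\beta=\beta\gamma$, holds automatically when $\gamma=\beta$: the first identity is precisely the axiom $\alpha\circ\beta=\beta\circ\alpha$ built into the notion of an $n$-BiHom-Lie color algebra, and the second is trivial. Condition (2), namely $\gamma[x_1,\dots,\gamma(x_i),\dots,x_n]=[x_1,\dots,\gamma(x_i),\dots,\gamma(x_j),\dots,x_n]$, is exactly the standing hypothesis that $\beta$ is an averaging operator. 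Hence all hypotheses of the Theorem are met with $\gamma=\beta$.

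The second step is to invoke that Theorem: the bracket $\{x_1,\dots,x_n\}=[x_1,\dots,\gamma(x_i),\dots,x_n]$, together with the unchanged twisting maps $\alpha$ and $\beta$ and the same bicharacter $\varepsilon$, defines an $n$-BiHom-Lie color algebra. Substituting $\gamma=\beta$ turns this bracket into $\{x_1,\dots,x_n\}=[x_1,\dots,\beta(x_i),\dots,x_n]$, which is exactly the multiplication in the statement, while the twisting pair $(\alpha,\beta)$ is untouched. Therefore $(L,\{\cdot,\dots,\cdot\},\varepsilon,\alpha,\beta)$ is an $n$-BiHom-Lie color algebra, which is the claim. (This mirrors the corollary just obtained by taking $\alpha=\gamma$ instead.)

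I do not expect a genuine obstacle here; the argument is a direct specialization. The only point worth a second look is that the proof of the Theorem on averaging operators repeatedly manipulates composites such as $\gamma\beta$, $\beta\gamma$, $\gamma\beta^{2}$, $\beta^{2}\gamma$ and uses identity (2) to pull a $\gamma$ out of an inner bracket onto an outer argument slot. With $\gamma=\beta$ these composites collapse to powers of $\beta$ and commute freely, and the averaging move is legitimate precisely because condition (2) holds for $\beta$ by hypothesis; so every step of that argument survives the specialization verbatim. If a self-contained verification is preferred, one would simply rerun the three computations of the Theorem's proof — the $\varepsilon$-BiHom skewsymmetry in the $k$-th and $(n-1)$-th slots and the $n$-BiHom $\varepsilon$-Jacobi identity — with $\gamma$ replaced by $\beta$ throughout, which is routine.
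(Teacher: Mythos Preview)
Your proposal is correct and matches the paper's approach exactly: the paper introduces this corollary with the single line ``Taking $\beta=\gamma$, yields the following statement,'' i.e., it is obtained by specializing the averaging-operator theorem with $\gamma=\beta$. Your additional remarks about why condition~(1) of the averaging-operator definition is automatic for $\gamma=\beta$ are accurate and slightly more explicit than what the paper bothers to say.
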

\begin{theorem}
 Let $(L, [\cdot, \dots, \cdot], \varepsilon, \alpha,\beta)$ be an $n$-BiHom-Lie color algebra and $\gamma : L\rightarrow L$ an averaging operator of $L$.
Then $(L, \{\cdot, \dots, \cdot\}, \varepsilon, \alpha,\beta)$ is  another $n$-BiHom-Lie color algebra, with
$$\{x_1, \dots, x_n\}=[x_1, \dots, \gamma(x_i),\dots,\gamma(x_j),\dots, x_n]$$
\end{theorem}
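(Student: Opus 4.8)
The plan is to obtain this statement from the two preceding results rather than to repeat the full $n$-BiHom-$\varepsilon$-Jacobi computation. First I would introduce the intermediate bracket $\langle x_1,\dots,x_n\rangle:=[x_1,\dots,\gamma(x_i),\dots,x_n]$, in which $\gamma$ is inserted only in the $i$-th slot. Since $\gamma$ is an averaging operator of $(L,[\cdot,\dots,\cdot],\varepsilon,\alpha,\beta)$, the earlier averaging-operator theorem (the single-$\gamma$ case) already guarantees that $(L,\langle\cdot,\dots,\cdot\rangle,\varepsilon,\alpha,\beta)$ is again an $n$-BiHom-Lie color algebra.

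Next I would verify that $\gamma$ is a \emph{semi-morphism} of $(L,\langle\cdot,\dots,\cdot\rangle,\varepsilon,\alpha,\beta)$ in the sense of the definition preceding Theorem~\ref{BakayokoSilvestrov:tch}. The relations $\gamma\alpha=\alpha\gamma$ and $\gamma\beta=\beta\gamma$ are built into the averaging-operator axioms, so only the substitution identity remains. Fixing $j\neq i$, axiom~(2) for the averaging operator gives $\gamma\langle x_1,\dots,x_n\rangle=\gamma[x_1,\dots,\gamma(x_i),\dots,x_n]=[x_1,\dots,\gamma(x_i),\dots,\gamma(x_j),\dots,x_n]=\langle x_1,\dots,\gamma(x_j),\dots,x_n\rangle$, the last equality holding because placing $\gamma(x_j)$ in the $j$-th slot of $\langle\cdot,\dots,\cdot\rangle$ simply produces a second $\gamma$, since the $i$-th slot already carries one. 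Thus $\gamma$ is a semi-morphism of the intermediate algebra with marked slot $j$, and applying Theorem~\ref{BakayokoSilvestrov:tch} to $(L,\langle\cdot,\dots,\cdot\rangle,\varepsilon,\alpha,\beta)$ with this semi-morphism shows that the bracket $\langle x_1,\dots,\gamma(x_j),\dots,x_n\rangle=[x_1,\dots,\gamma(x_i),\dots,\gamma(x_j),\dots,x_n]=\{x_1,\dots,x_n\}$ endows $L$ with the structure of an $n$-BiHom-Lie color algebra, which is the claim.

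The step I expect to require the most care is the index bookkeeping at the reduction: one must keep $i$ and $j$ distinct, ensure that inserting $\gamma$ into a new slot never collides with the slot already carrying $\gamma$, and notice that at the second step only the semi-morphism property of $\gamma$ on the intermediate algebra is needed (not that $\gamma$ is an averaging operator of it). As a fallback, a direct proof modeled on the computation in the proof of Theorem~\ref{BakayokoSilvestrov:tch} is also available: the $\varepsilon$-skewsymmetry is immediate because $\gamma$ is even and commutes with $\alpha$ and $\beta$, while the $n$-BiHom-$\varepsilon$-Jacobi identity follows by splitting the sum over the inner index $k$ according to the position of $k$ relative to the two $\gamma$-slots among the $y$'s and pulling the extra copies of $\gamma$ through with axiom~(2); this works but is considerably longer, which is why the reduction above is the route I would follow.
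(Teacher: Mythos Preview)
Your reduction is different from the paper's route and considerably more economical: the paper verifies the $n$-BiHom-$\varepsilon$-Jacobi identity for $\{\cdot,\dots,\cdot\}$ by a direct five-case computation, splitting the inner summation index $m$ according to its position relative to the two $\gamma$-slots $k,l$ among the $y$'s (the cases $m<k$, $m=k$, $k<m<l$, $m=l$, $m>l$) and using the averaging identity to shuffle the extra copies of $\gamma$ into place --- exactly the fallback you sketch at the end. Your two-step bootstrap, first to the intermediate bracket $\langle\cdot,\dots,\cdot\rangle$ and then to $\{\cdot,\dots,\cdot\}$, bypasses all of that.

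One point deserves tightening. The paper's definition of semi-morphism requires $\gamma\langle x_1,\dots,x_n\rangle=\langle x_1,\dots,\gamma(x_m),\dots,x_n\rangle$ at \emph{every} slot $m$, whereas you only check $m=j\neq i$; at $m=i$ the right-hand side is $[x_1,\dots,\gamma^{2}(x_i),\dots,x_n]$, and the averaging axiom alone does not force this to equal $\gamma[x_1,\dots,\gamma(x_i),\dots,x_n]$. So invoking Theorem~\ref{BakayokoSilvestrov:tch} as stated is not quite justified. The fix is painless: show instead that $\gamma$ is again an \emph{averaging operator} of $(L,\langle\cdot,\dots,\cdot\rangle,\varepsilon,\alpha,\beta)$ --- this is immediate from the original averaging axiom, since every $\langle\cdot,\dots,\cdot\rangle$-expression already contains a $\gamma$ in slot $i$, so $\gamma\langle x_1,\dots,\gamma(x_a),\dots,x_n\rangle=\gamma[x_1,\dots,\gamma(x_i),\dots,\gamma(x_a),\dots,x_n]=[x_1,\dots,\gamma(x_i),\dots,\gamma(x_a),\dots,\gamma(x_b),\dots,x_n]=\langle x_1,\dots,\gamma(x_a),\dots,\gamma(x_b),\dots,x_n\rangle$ --- and then apply the single-$\gamma$ averaging theorem a second time (with marked slot $j$) rather than the semi-morphism theorem. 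With that adjustment your reduction is complete and strictly shorter than the paper's argument.
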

\begin{proof} The proof is obtained as follows:
 $$\begin{array}{llllllll}
&&\{\beta(x_{1}),\ldots,\beta(x_{i}),\ldots,\beta(x_{k}),\beta(x_{k+1}),\dots,\beta(x_{n-1}),\alpha(x_{n})\}\\[0.2cm]
&=&[\beta(x_{1}),\ldots,\gamma\beta(x_{i}),\ldots,\beta(x_{k}),\beta(x_{k+1}),\dots,\gamma\beta(x_{j}),\dots,\beta(x_{n-1}),\alpha(x_{n})]\\[0.2cm]
&=&-\varepsilon(x_{k},x_{k+1})[\beta(x_{1}),\ldots,\gamma\beta(x_{i}),\ldots,\beta(x_{k+1}),\beta(x_{k}),\dots,\gamma\beta(x_{j}),\dots,\beta(x_{n-1}),\alpha(x_{n})]\\[0.2cm]
&=&-\varepsilon(x_{k},x_{k+1})\{\beta(x_{1}),\ldots,\beta(x_{i}),\ldots,\beta(x_{k+1}),\beta(x_{k}),\dots,\beta(x_{n-1}),\alpha(x_{n})\}.
\end{array}$$

Similarly, we can get
\begin{eqnarray}
&&\{\beta(x_{1}),\ldots,\beta(x_{i}),\ldots,\dots,\beta(x_{n-1}),\alpha(x_{n})\}\nonumber\\
&=&-\varepsilon(x_{n-1},x_{n})\{\beta(x_{1}),\ldots,\beta(x_{i}),\ldots,\dots,\beta(x_{n}),\alpha(x_{n-1})\}\nonumber\\
&&[\beta^{2}(x_1), \dots, \gamma\beta^{2}(x_i), \dots, \gamma\beta^{2}(x_j), \dots, \beta^{2}(x_{n-1}),
[\beta(y_1), \dots, \gamma\beta(y_k), \dots, \gamma\beta(y_l), \dots, \alpha(y_n)]]\nonumber\\[0.2cm]
&=&\displaystyle{\sum_{m<k}}\varepsilon(X, Y_m)[\beta^{2}(y_1), \dots, \beta^{2}(y_{m-1}),
[\beta(x_1), \dots, \gamma\beta(x_i), \dots, \gamma\beta(x_j), \dots, \beta(x_{n-1}), \alpha(y_m)], \nonumber \\[0.2cm]
&&
\beta^{2}(y_{m+1}), \dots, \gamma\beta^{2}(y_k), \dots, \gamma\beta^{2}(y_l), \dots, \beta^{2}(y_n)]\nonumber\\[0.2cm]
&&+\varepsilon(X, Y_k)[\beta^{2}(y_1), \dots, \beta^{2}(y_{k-1}),
[\beta(x_1), \dots, \gamma\beta(x_i), \dots, \gamma\beta(x_j), \dots, \beta(x_{n-1}), \gamma\alpha(y_k)],\nonumber\\[0.2cm]
&&\beta^{2}(y_{k+1}), \dots, \gamma\beta^{2}(y_l), \dots, \beta^{2}(y_n)]\nonumber\\[0.2cm]
&&+\displaystyle{\sum_{k<m<l}}\varepsilon(X, Y_m)[\beta^{2}(y_1), \dots, \gamma\beta^{2}(y_{k}), \dots, \beta^{2}(y_{m-1}),
\nonumber \\[0.2cm]
&&
[\beta(x_1), \dots, \gamma\beta(x_i), \dots, \gamma\beta(x_j), \dots, \beta (x_{n-1}), \alpha(y_m)], \beta^{2}(y_{m+1}), \dots, \gamma\beta^{2}(y_{l}), \dots, \gamma\beta^{2}(y_n)]\nonumber\\[0.2cm]
&&+\varepsilon(X, Y_l)[\beta^{2}(y_1), \dots, \gamma\beta^{2}(y_{k}),\dots, \beta^{2}(y_{l-1}),
\nonumber\\[0.2cm]
&&
[\beta(x_1), \dots, \gamma\beta(x_i), \dots, \gamma\beta(x_j), \dots, \beta(x_{n-1}), \gamma\alpha(y_l)],\beta^{2}(y_{l+1}), \dots, \beta^{2}(y_n)]\nonumber\\[0.2cm]
&&+\displaystyle{\sum_{m>l}}\varepsilon(X, Y_m)[\beta^{2}(y_1), \dots, \gamma\beta^{2}(y_{k}), \dots, \gamma\beta^{2}(y_{l}), \dots, \beta^{2}(y_{m-1}),
\nonumber\\[0.2cm]
&&
[\beta(x_1), \dots, \gamma\beta(x_i), \dots, \gamma\beta(x_j), \dots, \beta(x_{n-1}), \alpha(y_m)],\beta^{2}(y_{m+1}), \dots,  \beta^{2}(y_n)]\nonumber\\[0.2cm]
&=&\displaystyle{\sum_{m<k}}\varepsilon(X, Y_m)\{\beta^{2}(y_1), \dots, \beta^{2}(y_{m-1}),
\{\beta(x_1), \dots, \beta(x_i), \dots, \beta(x_j), \dots, \beta(x_{n-1}), \alpha(y_m)\},
\nonumber\\[0.2cm]
&&
\beta^{2}(y_{m+1}),\dots, \beta^{2}(y_k), \dots, \beta^{2}(y_l), \dots, \beta^{2}(y_n)\}\nonumber\\[0.2cm]
&&+\varepsilon(X, Y_k)[\beta^{2}(y_1), \dots, \beta^{2}(y_{k-1}),
\gamma([\beta(x_1), \dots, \gamma\beta(x_i), \dots, \gamma\beta(x_j), \dots, \beta(x_{n-1}), \alpha(y_k)]),\nonumber\\[0.2cm]
&& \beta^{2}(y_{k+1}), \dots, \gamma\beta^{2}(y_l), \dots, \beta^{2}(y_n)]\nonumber\\[0.2cm]
&&+\displaystyle{\sum_{k<m<l}}\varepsilon(X, Y_m)\{\beta^{2}(y_1), \dots, \beta^{2}(y_{k}), \dots, \beta^{2}(y_{m-1}),
\nonumber\\[0.2cm]
&&
\{\beta(x_1), \dots, \beta(x_i), \dots, \beta(x_j), \dots, \beta(x_{n-1}), \alpha(y_m)\},\beta^{2}(y_{m+1}), \dots, \beta^{2}(y_{l}), \dots, \beta^{2}(y_n)]\nonumber\\[0.2cm]
&&+\varepsilon(X, Y_l)[\beta^{2}(y_1), \dots, \gamma\beta^{2}(y_{k}),\dots, \beta^{2}(y_{l-1}),
\nonumber\\[0.2cm]
&&
\gamma([\beta(x_1), \dots, \gamma\beta(x_i), \dots, \gamma\beta(x_j), \dots, \beta(x_{n-1}), \alpha(y_l]), \beta^{2}(y_{l+1}), \dots, \beta^{2}(y_n)]\nonumber\\[0.2cm]
&&+\displaystyle{\sum_{m>l}}\varepsilon(X, Y_m)\{\beta^{2}(y_1), \dots, \beta^{2}(y_{k}), \dots, \beta^{2}(y_{l}), \dots, \beta^{2}(y_{m-1}),
\nonumber\\[0.2cm]
&&
\{\beta(x_1), \dots, \beta(x_i), \dots, \beta(x_j), \dots, \beta(x_{n-1}), \alpha(y_m)\},\beta^{2}(y_{m+1}), \dots,  \beta^{2}(y_n)\}\nonumber\\
 \end{eqnarray}
\newpage
\begin{eqnarray}
&=&\displaystyle{\sum_{m<k}}\varepsilon(X, Y_m)\{\beta^{2}(y_1), \dots, \beta^{2}(y_{m-1}),
\{\beta(x_1), \dots, \beta(x_i), \dots, \beta(x_j), \dots, \beta(x_{n-1}), \alpha(y_m)\},
\nonumber\\[0.2cm]
&&
\beta^{2}(y_{m+1}), \dots,\beta^{2}(y_k), \dots, \beta^{2}(y_l), \dots,\beta^{2}(y_n)\}\nonumber\\[0.2cm]
&&+\varepsilon(X, Y_k)\{\beta^{2}(y_1), \dots, \beta^{2}(y_{k-1}),
\{\beta(x_1), \dots, \beta(x_i), \dots, \beta(x_j), \dots, \beta(x_{n-1}), \alpha(y_k)\},\nonumber\\[0.2cm]
&&\beta^{2}(y_{k+1}), \dots, \beta^{2}(y_l), \dots, \beta^{2}(y_n)\}\nonumber\\[0.2cm]
&& +\displaystyle{\sum_{k<m<l}}\varepsilon(X, Y_m)\{\beta^{2}(y_1), \dots, \beta^{2}(y_{k}), \dots, \beta^{2}(y_{m-1}),
\nonumber\\[0.2cm]
&&
\{\beta(x_1), \dots, \beta(x_i), \dots, \beta(x_j), \dots, \beta(x_{n-1}), \alpha(y_m)\},\beta^{2}(y_{m+1}), \dots, \beta^{2}(y_{l}), \dots, \beta^{2}(y_n)\}\nonumber\\[0.2cm]
&&+\varepsilon(X, Y_l)\{\beta^{2}(y_1), \dots, \beta^{2}(y_{k}),\dots, \beta^{2}(y_{l-1}),
\{\beta(x_1), \dots, \beta(x_i), \dots, \beta(x_j), \dots, \beta(x_{n-1}), \alpha(y_l)\}),\nonumber\\[0.2cm]
&&\beta^{2}(y_{l+1}), \dots, \beta^{2}(y_n)\}\nonumber\\[0.2cm]
&&+\displaystyle{\sum_{m>l}}\varepsilon(X, Y_m)\{\beta^{2}(y_1), \dots, \beta^{2}(y_{k}), \dots, \beta^{2}(y_{l}), \dots, \beta^{2}(y_{m-1}),
\nonumber\\[0.2cm]
&&
\{\beta(x_1), \dots, \beta(x_i), \dots, \beta(x_j), \dots, \beta(x_{n-1}), \alpha(y_m)\},\beta^{2}(y_{m+1}), \dots,  \beta^{2}(y_n)\}\nonumber.
 \end{eqnarray}
This finishes the proof.
 \end{proof}
 \begin{proposition}
A homomorphism $f:(L,[\cdot,\ldots,\cdot],\alpha,\beta)\longrightarrow(L',[\cdot,\ldots,\cdot]',\alpha',\beta')$ is a morphism of $n$-BiHom-Lie color algebras if and only if the graph
$L\oplus L'$ is an $n$-BiHom-subalgebra of $(L\oplus L',[\cdot,\ldots,\cdot]_{L\oplus L'},\alpha+\alpha',\beta+\beta')$.
\end{proposition}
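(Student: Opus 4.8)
The plan is to lean entirely on the direct-sum construction already established: by the Proposition preceding the statement, $(L\oplus L',[\cdot,\ldots,\cdot]_{L\oplus L'},\alpha+\alpha',\beta+\beta')$ is itself an $n$-BiHom-Lie color algebra, so the only thing to do is to match the three defining conditions of an $n$-BiHom-subalgebra ($(\alpha+\alpha')$-invariance, $(\beta+\beta')$-invariance, closedness under $[\cdot,\ldots,\cdot]_{L\oplus L'}$) against the three defining conditions of a morphism ($f\alpha=\alpha'f$, $f\beta=\beta'f$, $f[x_1,\ldots,x_n]=[f(x_1),\ldots,f(x_n)]'$). A preliminary remark I would make is that, because $f$ is even, an element $x+f(x)$ with $x\in L_\gamma$ lies in $(L\oplus L')_\gamma$, so $\phi_f$ is a $\Gamma$-graded subspace of $L\oplus L'$; this is what makes "$n$-BiHom-subalgebra" meaningful for $\phi_f$.

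For the "only if" direction, assume $f$ is a morphism. Then for homogeneous $x$,
\[(\alpha+\alpha')(x+f(x))=\alpha(x)+\alpha'(f(x))=\alpha(x)+f(\alpha(x))\in\phi_f,\]
and similarly $(\beta+\beta')(\phi_f)\subseteq\phi_f$; moreover, from the definition of the bracket on $L\oplus L'$ together with the morphism property,
\[[x_1+f(x_1),\ldots,x_n+f(x_n)]_{L\oplus L'}=[x_1,\ldots,x_n]+[f(x_1),\ldots,f(x_n)]'=[x_1,\ldots,x_n]+f([x_1,\ldots,x_n])\in\phi_f.\]
Hence $\phi_f$ satisfies all three conditions and is an $n$-BiHom-subalgebra.

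Conversely, assume $\phi_f$ is an $n$-BiHom-subalgebra. The key device is that the projection of $L\oplus L'$ onto its first summand is injective on $\phi_f$: if $z+f(z)=z'+f(z')$ then $z=z'$, so an element of $\phi_f$ is determined by its $L$-component. Applying $\alpha+\alpha'$ to $x+f(x)\in\phi_f$ gives $\alpha(x)+\alpha'(f(x))\in\phi_f$; this element must then equal $\alpha(x)+f(\alpha(x))$, so comparing $L'$-components yields $\alpha'(f(x))=f(\alpha(x))$, i.e. $f\alpha=\alpha'f$; the same argument with $\beta+\beta'$ gives $f\beta=\beta'f$. Finally, closedness under the bracket gives, for homogeneous $x_i$,
\[[x_1,\ldots,x_n]+[f(x_1),\ldots,f(x_n)]'\in\phi_f,\]
and equating this with $[x_1,\ldots,x_n]+f([x_1,\ldots,x_n])$ (same $L$-component) forces $[f(x_1),\ldots,f(x_n)]'=f([x_1,\ldots,x_n])$. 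Thus $f$ is a morphism.

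I do not expect a genuine obstacle here; this is a routine "graph trick". The only points needing a little care are the bookkeeping with the $\Gamma$-grading — checking that $\phi_f$ is a homogeneous subspace, which is precisely where the evenness of $f$ (and of $\alpha+\alpha'$, $\beta+\beta'$) enters — and the elementary but essential observation that membership of $\alpha(x)+\alpha'(f(x))$, $\beta(x)+\beta'(f(x))$, and $[x_1,\ldots,x_n]+[f(x_1),\ldots,f(x_n)]'$ in $\phi_f$ can be read off component-wise, which is what converts subalgebra-closedness back into the morphism identities.
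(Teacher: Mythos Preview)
Your proof is correct and follows essentially the same approach as the paper: both directions are handled by matching the three subalgebra conditions ($(\alpha+\alpha')$-invariance, $(\beta+\beta')$-invariance, bracket closure) against the three morphism conditions via component-wise comparison on $\phi_f$. Your version is in fact slightly more careful, since you make explicit the injectivity of the first projection on $\phi_f$ and the $\Gamma$-grading of $\phi_f$ coming from the evenness of $f$, points the paper leaves implicit.
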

\begin{proof}
Let $f:(L,[\cdot,\ldots,\cdot],\alpha,\beta)\longrightarrow(L',[\cdot,\ldots,\cdot]',\alpha',\beta')$ is a morphism of $n$-BiHom-Lie color algebras, then for any $x,x_{1},\ldots,x_{n}\in L$, we have
$$[x_{1}+f(x_{1}),\ldots,x_{n}+f(x_{n})]_{L \oplus L'}=[x_{1},\ldots,x_{n}]+[f(x_{1}),\ldots,f(x_{n})]'=[x_{1},\ldots,x_{n}]+f([x_{1},\ldots,x_{n}]).$$
Thus the graph $\phi_{f}$ is closed under the bracket operation $[\cdot,\ldots,\cdot]_{L\oplus L'}$. Furthermore, we have
$$(\alpha+\alpha')(x+f(x))=\alpha(x)+\alpha'\circ f(x)=\alpha(x)+f\circ\alpha(x),$$
which implies that $$(\alpha+\alpha')(\phi_{f})\subset\phi_{f}.$$
Similarly,$$(\beta+\beta')(\phi_{f})\subset\phi_{f}.$$
Thus $\phi_{f}$ is an $n$-BiHom-Lie color subalgebras of $(L\oplus L',[\cdot,\ldots,\cdot]_{L\oplus L'},\alpha+\alpha',\beta+\beta')$.

Conversely, if the graph $\phi_{f}$ is an $n$-BiHom-Lie color subalgebras of $(L\oplus L',[\cdot,\ldots,\cdot]_{L\oplus L'},\alpha+\alpha',\beta+\beta')$, we have
$$[x_{1}+f(x_{1}),\ldots,x_{n}+f(x_{n})]_{L \oplus L'}=[x_{1},\ldots,x_{n}]+[f(x_{1}),\ldots,f(x_{n})]'\in\phi_{f}$$
which implies that $$[f(x_{1}),\ldots,f(x_{n})]'=f([x_{1},\ldots,x_{n}]).$$
Furthermore, $(\alpha+\alpha')(\phi_{f})\subset\phi_{f}$ yields that
$$(\alpha+\alpha')(x+f(x))=\alpha(x)+\alpha'\circ f(x)\in\phi_{f},$$
which is equivalent to the condition $\alpha'\circ f(x)=f\circ \alpha(x)$, i.e, $\alpha'\circ f=f\circ\alpha.$ Similarly, $\beta'\circ f=f\circ\beta.$ Therefore, $f$ is a morphism of
$n$-BiHom-Lie color algebras.
\end{proof}

\section{BiHom-modules over $n$-BiHom-Lie color algebras}
\label{BakayokoSilvestrov:sec:modulesnhomcoloralg}
In this section we consider BiHom-modules over $n$-BiHom-Lie color algebras.

\begin{definition}
Let $\Gamma$ be an abelian group. A BiHom-module \index{Hom-module} is a pair $(M,\alpha_M,\beta_{M})$ in which $M$ is a $\Gamma$-graded linear space and $\alpha_M,\beta_{M}: M\longrightarrow M$ are an even linear maps such that $\alpha_{M}\beta_{M}=\beta_{M}\alpha_{M}$.
\end{definition}

\begin{definition}
  Let $(L, [\cdot, \dots, \cdot], \varepsilon, \alpha,\beta)$ be an $n$-BiHom-Lie color algebra and $(M, \alpha_M,\beta_{M})$ a BiHom-module. The BiHom-module $(M, \alpha_M,\beta_{M})$
 is called an  $n$-BiHom-Lie module \index{Hom-module!n-Hom-Lie} over $L$ if there are  $n$ polylinear maps:
$$\omega_i : L\otimes \dots L\otimes\underbrace{M}_{i}\otimes L\otimes\dots\otimes L\rightarrow M, \quad i=1, 2, \dots, n$$
such that, for any $x_i, y_i\in\mathcal{H}(L)$ and $m\in\mathcal{H}(M)$,
\begin{enumerate}
 \item[a)]$\omega_i(\beta(x_1), \dots, \beta(x_{i-1}), \beta_{M}(m), \beta(x_{i+1}), \dots, \alpha(x_n))$ is a $\varepsilon$-skew-symmetric by all $x$-type arguments.
 \item[b)]$\omega_i(\beta(x_1), \dots, \beta(x_{i-1}), \beta_{M}(m), \beta(x_{i+1}), \dots, \alpha(x_n))=-\varepsilon(m, x_{i+1})\omega_{i+1}(\beta(x_1), \dots,\\[0.2cm] \beta(x_{i-1}), \beta(x_{i+1}), \beta_{M}(m),\dots, \alpha(x_n))$ ;\\[0.3cm]
     for $i=1, 2, \dots, n-1$.\\[0.3cm]
$\omega_{n-1}(\beta(x_1), \dots,\beta(x_{n-2}), \beta_{M}(m),\alpha(x_n))=-\varepsilon(m, x_{n})\omega_{n}(\beta(x_1),\dots, \beta(x_{n-2}),\beta(x_n), \alpha_{M}(m))$; \\[0.3cm]
 \item[c)]
$\omega_n(\beta^{2}(x_1), \dots, \beta^{2}(x_{n-1}), \omega_n(\beta(y_1), \dots, \beta(y_{n-1}), \alpha_{M}(m)))=$
\begin{eqnarray}
&=&\sum_{i=1}^{n-1}\varepsilon(X, Y_i)\omega_n(\beta^{2}(y_1), \dots, \beta^{2}(y_{i-1}),
[\beta(x_1), \dots, \beta(x_{n-1}), \alpha(y_i)], \beta^{2}(y_{i+1}), \dots, \beta^{2}_M(m))\nonumber\\
&&+\varepsilon(X, Y_n)\omega_n(\beta^{2}(y_1), \dots, \beta^{2}(y_{n-1}), \omega_n(\beta(x_1), \dots, \beta(x_{n-1}), \alpha_{M}(m))),\nonumber
\end{eqnarray}
where $x_i, y_j\in\mathcal{H}(L)$, $X=\sum_{i=1}^{n-1}x_i$,  $Y_i=\sum_{j=1}^iy_{j-1}, y_0=e$ and $m\in\mathcal{H}(M)$.
\item[d)]
$\omega_{n-1}(\beta^{2}(x_1), \dots, \beta^{2}(x_{n-2}), \beta^{2}_M(m), [\beta(y_1), \dots, \alpha(y_{n})])=$
\begin{eqnarray}
&=&\sum_{i=1}^{n}\varepsilon(X, Y_i)\omega_i(\beta^{2}(y_1), \dots, \beta^{2}(y_{i-1}),
\omega_{n-1}(\beta(x_1), \dots, \beta(x_{n-2}), \beta_{M}(m), \alpha(y_i)),
\nonumber \\
&&
\beta^{2}(y_{i+1}), \dots, \beta^{2}(y_n))\nonumber,
\end{eqnarray}
where $x_i, y_j\in\mathcal{H}(L)$, $X=\sum_{i=1}^{n-2}x_i+m$,  $Y_i=\sum_{j=1}^iy_{j-1}, y_0=e$ and $m\in\mathcal{H}(M)$.
\end{enumerate}
\end{definition}

\begin{example}
 Any $n$-BiHom-Lie color algebra $(L, [\cdot, \dots, \cdot], \varepsilon, \alpha,\beta)$ is an $n$-BiHom-Lie module over itself by taking $M=L$, $\alpha_M=\alpha,~\beta_{M}=\beta$
and $\omega_i(\cdot,\dots, \cdot)=[\cdot, \dots, \cdot]$.
\end{example}

\begin{theorem}
  Let $(L, [\cdot, \dots, \cdot], \varepsilon, \alpha,\beta)$ be an $n$-BiHom-Lie color algebra, and let $(M, \alpha_M,\beta_{M}, \omega_i)$ be an $n$-BiHom-Lie color module
and $\gamma : L\rightarrow L$ be an endomorphism. Define
$$\tilde\omega_i=\omega_{i}(\gamma, \dots, \gamma, \underbrace{id}_i, \gamma, \dots, \gamma),~ i=1, 2, \dots, n.$$
Then $(M, \alpha_M,\beta_{M}, \tilde\omega_i)$ is an $n$-BiHom-Lie color module.
\end{theorem}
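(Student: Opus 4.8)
The plan is to verify the four module axioms (a)--(d) for the twisted maps $\tilde\omega_i$, each time reducing the statement for $\tilde\omega_i$ to the corresponding statement for the original maps $\omega_i$, evaluated on ``$\gamma$-twisted'' arguments. Recall that an endomorphism $\gamma$ of the $n$-BiHom-Lie color algebra $(L,[\cdot,\dots,\cdot],\varepsilon,\alpha,\beta)$ is, by the definition of a morphism, an \emph{even} linear map satisfying $\gamma\alpha=\alpha\gamma$, $\gamma\beta=\beta\gamma$ and $\gamma[x_1,\dots,x_n]=[\gamma(x_1),\dots,\gamma(x_n)]$. Since $\alpha_M$ and $\beta_M$ are unchanged, $(M,\alpha_M,\beta_M)$ is still a BiHom-module, so only conditions (a)--(d) require checking.

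The basic observation, which will be used four times, is the following. In $\tilde\omega_j$, which carries $\mathrm{id}$ in the $j$-th (module) slot and an extra $\gamma$ in every other, $L$-type, slot, feeding in the usual data --- arguments of the form $\beta^{p}(x_k)$, $\alpha(x_k)$, $\beta^{p}_M(m)$, $\alpha_M(m)$, a bracket term, or a nested $\omega$ --- one may push those $\gamma$'s through $\alpha$ and $\beta$ by $\gamma\alpha=\alpha\gamma$, $\gamma\beta=\beta\gamma$ so that they act directly on the innermost elements. Hence $\tilde\omega_j$ on such data equals $\omega_j$ on the same data with each $x_k$ replaced by $\gamma(x_k)$ and each $y_k$ by $\gamma(y_k)$. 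Moreover, since $\gamma$ is even, $\overline{\gamma(x_k)}=\overline{x_k}$, so any bicharacter factor (e.g.\ $\varepsilon(X,Y_i)$, and in (d) the contribution of $m$) is unaffected by this replacement.

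Given this, (a) and (b) are immediate: the $\varepsilon$-skew-symmetry of $\omega_i$ in its $x$-type arguments transfers verbatim to $\tilde\omega_i$, and the shift relation in (b) linking $\omega_i$ and $\omega_{i+1}$, applied with $\gamma(x_k)$ in place of $x_k$, becomes the corresponding relation for $\tilde\omega_i$ and $\tilde\omega_{i+1}$, using $\varepsilon(m,\gamma(x_{i+1}))=\varepsilon(m,x_{i+1})$; the boundary instance relating $\omega_{n-1}$ and $\omega_n$ is handled identically. For (c) and (d) the only extra input is that $\gamma$ is an algebra morphism: each inner bracket $[\beta(x_1),\dots,\beta(x_{n-1}),\alpha(y_i)]$ occurring inside an outer $\omega$ becomes $\gamma[\beta(x_1),\dots,\alpha(y_i)]=[\beta\gamma(x_1),\dots,\alpha\gamma(y_i)]$, which is exactly the bracket slot of $\tilde\omega$. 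One then unfolds the nested compositions $\tilde\omega_n(\dots,\tilde\omega_n(\dots))$ in (c) and $\tilde\omega_i(\dots,\tilde\omega_{n-1}(\dots))$ in (d) into $\omega$'s pre-composed with $\gamma$, applies axiom (c), respectively (d), for $\omega$ on the $\gamma$-twisted arguments, and re-folds each resulting term into a $\tilde\omega$ by the basic observation, obtaining the desired identities.

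There is no genuine difficulty here beyond bookkeeping. The one point that needs care is tracking \emph{which} slot is the module slot at each stage: in (b) the module argument migrates from position $i$ to position $i+1$, and in (c)--(d) it sits inside a nested $\omega$ whose outer position must coincide with the $\mathrm{id}$-slot of the enclosing $\tilde\omega$. Once the slot indices are aligned, every step is the substitution $x_k\mapsto\gamma(x_k)$, $y_k\mapsto\gamma(y_k)$ combined with $\gamma\alpha=\alpha\gamma$, $\gamma\beta=\beta\gamma$ and the morphism identity for $\gamma$. This shows that $(M,\alpha_M,\beta_M,\tilde\omega_i)$ is an $n$-BiHom-Lie color module over $L$.
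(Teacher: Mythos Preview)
Your proposal is correct and follows essentially the same approach as the paper: unfold each $\tilde\omega$ into $\omega$ with $\gamma$-twisted $L$-arguments, commute $\gamma$ through $\alpha,\beta$, apply the corresponding module axiom for $\omega$, use the morphism property $\gamma[\cdot,\dots,\cdot]=[\gamma(\cdot),\dots,\gamma(\cdot)]$ for the bracket terms in (c) and (d), and re-fold into $\tilde\omega$. The paper carries this out by explicit computation (treating (a) as obvious, writing out (b) and (c) line by line, and declaring (d) similar), whereas you give the same argument in conceptual form with careful attention to the slot bookkeeping.
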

\begin{proof}
 The item $a)$ is obvious, we prove the item $b)$.

$$\begin{array}{lllll}&&\tilde\omega_i(\beta(x_1), \dots, \beta(x_{i-1}), \beta_{M}(m), \beta(x_{i+1}), \dots, \alpha(x_n))\\[0.2cm]&=&\omega_i(\gamma\beta(x_1), \dots, \gamma\beta(x_{i-1}), \beta_{M}(m),\gamma \beta(x_{i+1}), \dots, \gamma\alpha(x_n))\\[0.2cm]&=&-\varepsilon(m, x_{i+1})\omega_{i+1}(\gamma\beta(x_1), \dots,\gamma \beta(x_{i-1}), \gamma\beta(x_{i+1}), \beta_{M}(m),\dots,\gamma \alpha(x_n)) ;\\[0.2cm]
&=&-\varepsilon(m, x_{i+1})\tilde\omega_{i+1}(\beta(x_1), \dots, \beta(x_{i-1}), \beta(x_{i+1}), \beta_{M}(m), \dots, \alpha(x_n)).
\end{array}$$
Similarly, we have
$$\widetilde{\omega}_{n-1}(\beta(x_1), \dots,\beta(x_{n-2}), \beta_{M}(m),\alpha(x_n))=-\varepsilon(m, x_{n})\widetilde{\omega}_{n}(\beta(x_1),\dots, \beta(x_{n-2}),\beta(x_n), \alpha_{M}(m))$$.

So we only prove $c)$, item $d)$ being proved similarly.
$$\begin{array}{llll}
&&\tilde\omega_n(\beta^{2}(x_1), \dots, \beta^{2}(x_{n-1}), \tilde\omega_n(\beta(y_1), \dots, \beta(y_{n-1}), \alpha_{M}(m)))=\nonumber\\[0.2cm]
&=&\omega_n(\gamma\beta^{2}(x_1), \dots, \gamma\beta^{2}(x_{n-1}), \omega_n(\gamma\beta(y_1), \dots, \gamma\beta(y_{n-1}), \alpha_{M}(m)))\nonumber\\[0.2cm]
&=&\omega_n(\beta^{2}\gamma(x_1), \dots, \beta^{2}\gamma(x_{n-1}), \omega_n(\beta\gamma(y_1), \dots, \beta\gamma(y_{n-1}), \alpha_{M}(m)))\nonumber\\[0.2cm]
&=&\displaystyle{\sum_{i=1}^{n-1}}\varepsilon(X, Y_i)\omega_n(\beta^{2}\gamma(y_1), \dots, \beta^{2}\gamma(y_{i-1}),
[\beta\gamma(x_1), \dots, \beta\gamma(x_{n-1}), \beta\gamma(y_i)], \nonumber\\[0.2cm]
&&\beta^{2}\gamma(y_{i+1}), \dots, \beta_{M}^{2}(m)) \nonumber\\[0.2cm]
&&  +\varepsilon(X, Y_n)\omega_n(\beta^{2}\gamma(y_1), \dots, \beta^{2}\gamma(y_{n-1}), \omega_n(\beta\gamma(x_1), \dots, \beta\gamma(x_{n-1}), \alpha_{M}(m)))\nonumber\\[0.2cm]
&=&\displaystyle{\sum_{i=1}^{n-1}}\varepsilon(X, Y_i)\omega_n(\gamma\beta^{2}(y_1), \dots, \gamma\beta^{2}(y_{i-1}),
\gamma([\beta(x_1), \dots, \beta(x_{n-1}), \alpha(y_i)]),
\nonumber\\[0.2cm]
&&
\gamma\beta^{2}(y_{i+1}), \dots, \beta^{2}_M(m))\nonumber\\[0.2cm]
&&+\varepsilon(X, Y_n)\omega_n(\gamma\beta^{2}(y_1), \dots, \gamma\beta^{2}(y_{n-1}), \omega_n(\gamma\beta(x_1), \dots, \gamma\beta(x_{n-1}), \alpha_{M}(m))),\nonumber\\[0.2cm]
&=&\displaystyle{\sum_{i=1}^{n-1}}\varepsilon(X, Y_i)\omega_n(\gamma\otimes\dots\otimes\gamma\otimes id)(\beta^{2}(y_1), \dots, \beta^{2}(y_{i-1}),
[\beta(x_1), \dots, \beta(x_{n-1}), \alpha(y_i)],
\nonumber\\[0.2cm]
&&
\beta^{2}(y_{i+1}), \dots, \beta^{2}_M(m))\nonumber\\[0.2cm]
&& +\varepsilon(X, Y_n)\omega_n(\gamma\otimes\dots\otimes\gamma\otimes id)\Big(\beta^{2}(y_{1})\otimes \dots\otimes \beta^{2}(y_{n-1})\otimes
 \omega_n(\beta(x_{1})\otimes\dots\otimes\beta(x_{n-1})\otimes \alpha_{M}(m))\Big)
 \nonumber\\[0.2cm]
&=&\displaystyle{\sum_{i=1}^{n-1}}\varepsilon(X, Y_i)\tilde\omega_n(\beta^{2}(y_1), \dots, \beta^{2}(y_{i-1}),
[\beta(x_1), \dots, \beta(x_{n-1}), \alpha(y_i)], \beta^{2}(y_{i+1}), \dots, \beta^{2}_M(m))\nonumber\\[0.2cm]
&&+\varepsilon(X, Y_n)\tilde\omega_n(\beta^{2}(y_1), \dots,\beta^{2}(y_{n-1}),
 \tilde\omega_n(\beta(x_1), \dots, \beta(x_{n-1}), \alpha_{M}(m)))\nonumber.
\end{array}$$
This ends the proof.
 \end{proof}
\begin{corollary}
 Let $(L, [\cdot, \dots, \cdot], \varepsilon, \alpha,\beta)$ be an $n$-BiHom-Lie color algebra and $\gamma : L\rightarrow L$ be an endomorphism.
Then $(L, \{\cdot, \dots, \cdot\}_i, \alpha,\beta)$, with
$$\{\cdot, \dots, \cdot\}_i=[\gamma, \dots, \gamma,\underbrace{id}_i, \gamma, \dots, \gamma], \quad i=1, 2, \dots, n,$$
is an $n$-BiHom-Lie color module.
\end{corollary}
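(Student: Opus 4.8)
The plan is to derive this statement as an immediate specialization of the preceding theorem, combined with the example exhibiting every $n$-BiHom-Lie color algebra as a module over itself. First I would recall that, for an $n$-BiHom-Lie color algebra $(L,[\cdot,\dots,\cdot],\varepsilon,\alpha,\beta)$, the choice $M=L$, $\alpha_M=\alpha$, $\beta_M=\beta$ and $\omega_i(\cdot,\dots,\cdot)=[\cdot,\dots,\cdot]$ for every $i=1,\dots,n$ makes $L$ an $n$-BiHom-Lie color module over itself: in this case conditions a) and b) of the definition of an $n$-BiHom-Lie color module reduce to the $\varepsilon$-BiHom-skewsymmetry of the bracket, while conditions c) and d) reduce to the $n$-BiHom $\varepsilon$-Jacobi identity, with the module element $m$ taking the place of an element of $L$ in the relevant slot. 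This is exactly the content of the example above, so no extra verification is required at this step.

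Next I would apply the preceding theorem to this module, with $\gamma:L\to L$ the given endomorphism. The theorem yields that $(M,\alpha_M,\beta_M,\tilde\omega_i)$ is again an $n$-BiHom-Lie color module, where $\tilde\omega_i=\omega_i(\gamma,\dots,\gamma,\underbrace{id}_i,\gamma,\dots,\gamma)$. Substituting $M=L$, $\alpha_M=\alpha$, $\beta_M=\beta$ and $\omega_i=[\cdot,\dots,\cdot]$ gives $\tilde\omega_i=[\gamma,\dots,\gamma,\underbrace{id}_i,\gamma,\dots,\gamma]=\{\cdot,\dots,\cdot\}_i$, and therefore $(L,\{\cdot,\dots,\cdot\}_i,\alpha,\beta)$ is an $n$-BiHom-Lie color module over $(L,[\cdot,\dots,\cdot],\varepsilon,\alpha,\beta)$, which is the desired conclusion.

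Since the corollary is a pure specialization, I do not expect any real obstacle. The only point that needs attention is that the hypotheses of the theorem are genuinely met: here $\gamma$ is an endomorphism of the $n$-BiHom-Lie color algebra, so in particular $\gamma\alpha=\alpha\gamma$, $\gamma\beta=\beta\gamma$ and $\gamma[x_1,\dots,x_n]=[\gamma(x_1),\dots,\gamma(x_n)]$, and these are precisely the relations used in the proof of the theorem to push $\gamma$ across the brackets. One could equally well write the proof out directly, by copying the chain of equalities in the proof of the theorem with $\omega_i$ replaced everywhere by $[\cdot,\dots,\cdot]$ and $m$ replaced by a homogeneous element of $L$; but invoking the theorem is shorter and avoids repeating that computation.
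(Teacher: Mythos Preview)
Your proposal is correct and follows exactly the approach intended by the paper: the corollary is stated there without proof, immediately after the theorem, and is meant to be obtained by applying that theorem to the regular module of Example~4.3. Your remark that the endomorphism hypothesis on $\gamma$ supplies the commutation relations $\gamma\alpha=\alpha\gamma$, $\gamma\beta=\beta\gamma$ and the bracket-preservation used in the theorem's proof is the only point worth making explicit, and you have done so.
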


\begin{corollary}
 Let $(L, [\cdot, \dots, \cdot], \varepsilon, \alpha,\beta)$ be a multiplicative $n$-BiHom-Lie color algebra.
Then, for any $k,l\geq 1$, $(L, \{\cdot, \dots, \cdot\}_i^{k,l}, \alpha,\beta)$ is an $n$-BiHom-Lie color module, with
$$\{\cdot, \dots, \cdot\}_i^{k,l}=[\alpha^k\beta^{l}, \dots, \alpha^k\beta^{l},\underbrace{id}_i, \alpha^k\beta^{l}, \dots, \alpha^k\beta^{l}], \quad i=1, 2, \dots, n.$$
\end{corollary}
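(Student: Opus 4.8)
The statement is a direct specialization of the preceding Corollary (equivalently, of the Theorem on twisting a module by an endomorphism, applied with $M=L$, $\alpha_{M}=\alpha$, $\beta_{M}=\beta$ and $\omega_{i}=[\cdot,\dots,\cdot]$), applied to the particular endomorphism $\gamma:=\alpha^{k}\beta^{l}$. So the whole content of the proof is to verify that $\alpha^{k}\beta^{l}$ is an admissible choice of $\gamma$, namely an even endomorphism of the $n$-BiHom-Lie color algebra $L$ that commutes with $\alpha$ and with $\beta$.

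First I would use multiplicativity: by hypothesis $\alpha$ and $\beta$ are even endomorphisms of the bracket, i.e. $\alpha([x_{1},\dots,x_{n}])=[\alpha(x_{1}),\dots,\alpha(x_{n})]$ and likewise for $\beta$. A composition of even endomorphisms is again an even endomorphism, so for every $k,l\geq 1$ the maps $\alpha^{k}$ and $\beta^{l}$ are even endomorphisms of the bracket; and since $\alpha\circ\beta=\beta\circ\alpha$, the composite $\gamma=\alpha^{k}\beta^{l}$ is well defined (independent of the order in which the factors are applied), even, and satisfies $\gamma([x_{1},\dots,x_{n}])=[\gamma(x_{1}),\dots,\gamma(x_{n})]$ for all homogeneous $x_{i}\in L$.

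Next I would record the commutation relations needed to invoke the Corollary: from $\alpha\beta=\beta\alpha$ one gets $\gamma\alpha=\alpha^{k}\beta^{l}\alpha=\alpha\,\alpha^{k}\beta^{l}=\alpha\gamma$ and similarly $\gamma\beta=\beta\gamma$. Thus $\gamma=\alpha^{k}\beta^{l}$ is an endomorphism of $L$ as an $n$-BiHom-Lie color algebra, so the Corollary (resp. the Theorem with $M=L$) applies verbatim and yields that $(L,\{\cdot,\dots,\cdot\}_{i}^{k,l},\alpha,\beta)$ with $\{\cdot,\dots,\cdot\}_{i}^{k,l}=[\gamma,\dots,\gamma,\underbrace{id}_{i},\gamma,\dots,\gamma]$ is an $n$-BiHom-Lie color module for each $i=1,\dots,n$.

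There is no genuine obstacle here; the argument is pure bookkeeping. The only point that deserves to be stated explicitly is the role of the axiom $\alpha\beta=\beta\alpha$, which is what makes $\alpha^{k}\beta^{l}$ a well-defined endomorphism and simultaneously gives its commutation with $\alpha$ and $\beta$; multiplicativity is then exactly what guarantees this composite respects the bracket, so that all hypotheses of the cited results are met.
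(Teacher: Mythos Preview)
Your proposal is correct and is exactly the intended argument: the paper states this corollary without proof, as an immediate specialization of the preceding corollary (itself the case $M=L$ of the theorem) with $\gamma=\alpha^{k}\beta^{l}$, and the multiplicativity of $L$ together with $\alpha\beta=\beta\alpha$ is precisely what ensures that $\alpha^{k}\beta^{l}$ is an even endomorphism commuting with $\alpha$ and $\beta$.
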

We end this section by giving some results for trivial gradation i.e. $\Gamma=\{e\}$.

\begin{proposition}
 Let $(M, \alpha_M, \beta_{M},\omega_i)$ be a module over the $n$-BiHom-Lie algebra \\ $(L, [\cdot, \dots, \cdot], \alpha_L,\beta_{L})$. Consider the direct sum of linear spaces
$A=L\oplus M$. Let' us define on $A$ the bracket
\begin{itemize}
 \item $\{x_1, \dots, x_n\}=[x_1, \dots, x_n]$,
\item $\{x_1, \dots, x_{i-1}, m, x_{i+1}, \dots, x_n\}=\omega_i(x_1, \dots, x_{i-1}, m, x_{i+1}, \dots, x_n)$,
\item $\{x_1, \dots, x_{i}, \dots, x_{j}, \dots, x_n\}=0$, whenever $x_i, x_j\in M$.
\end{itemize}
Then $(A,  \{\cdot,\dots,\cdot\},\alpha_A=\alpha_L+\alpha_M, \beta_A=\beta_L+\beta_M)$ is an $n$-BiHom-Lie algebra.
\end{proposition}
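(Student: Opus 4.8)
The plan is to verify directly the three defining axioms of an $n$-BiHom-Lie algebra (the trivial-gradation case $\Gamma=\{e\}$ of the definitions above, so every bicharacter value is $1$) for the $5$-tuple $(A,\{\cdot,\dots,\cdot\},\alpha_A,\beta_A)$, exploiting the fact that $\{\cdot,\dots,\cdot\}$ annihilates any tuple with two or more entries in $M$. Consequently the only configurations that need to be examined are those with zero or exactly one $M$-entry.

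First I would check $\alpha_A\beta_A=\beta_A\alpha_A$. On $x+m\in A$ one has $\alpha_A\beta_A(x+m)=\alpha_L\beta_L(x)+\alpha_M\beta_M(m)$, which equals $\beta_L\alpha_L(x)+\beta_M\alpha_M(m)=\beta_A\alpha_A(x+m)$ since $\alpha_L\beta_L=\beta_L\alpha_L$ holds in $L$ and $\alpha_M\beta_M=\beta_M\alpha_M$ is part of the BiHom-module datum. Next, for the skewsymmetry of $\{\cdot,\dots,\cdot\}$ I split on the number of $M$-entries: with no $M$-entry it is exactly the skewsymmetry of $[\cdot,\dots,\cdot]$ in $L$; with exactly one $M$-entry it is precisely conditions (a) and (b) in the definition of an $n$-BiHom-Lie module; with two or more $M$-entries both sides are $0$.

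The heart of the proof is the $n$-BiHom $\varepsilon$-Jacobi identity, and this is where the case analysis must actually be carried out. Fix arguments $x_1,\dots,x_{n-1}$ (outer, carrying $\beta_A^2$) and $y_1,\dots,y_n$ (inner, carrying $\beta_A$, except $\alpha_A$ on $y_n$), each lying in $L$ or in $M$. If all $2n-1$ entries lie in $L$, the identity reduces verbatim to the $n$-BiHom $\varepsilon$-Jacobi identity of $L$. If exactly one entry lies in $M$, then, up to the skewsymmetry established above (used to slide the $M$-entry into a standard slot), there are two cases: (i) the $M$-entry is among $x_1,\dots,x_{n-1}$ — moving it to the $(n-1)$-st outer slot, the inner bracket stays $L$-valued and both sides become exactly the two sides of module axiom (d), after matching the twists $\beta_A,\beta_A^2,\alpha_A$ slot by slot with $\beta_L,\beta_L^2,\beta_M,\beta_M^2,\alpha_L,\alpha_M$; (ii) the $M$-entry is among $y_1,\dots,y_n$ — moving it to the slot $y_n$, the inner bracket on the left becomes $\omega_n(\beta(y_1),\dots,\beta(y_{n-1}),\alpha_M(m))\in M$, the outer bracket becomes an $\omega_n$, and the resulting equation is module axiom (c) (the $k<n$ terms on the right being $\omega_n$ applied to an $L$-valued inner bracket, the $k=n$ term being the nested $\omega_n$). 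Finally, if two or more entries lie in $M$, every bracket occurring on either side has at least two $M$-arguments and hence vanishes, so the identity reads $0=0$.

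I expect the only real obstacle to be bookkeeping: checking in cases (i) and (ii) that the twisting maps $\beta_A,\beta_A^2,\alpha_A$ restrict correctly in each slot so the equations line up exactly with axioms (c) and (d) as stated, and that the trivial bicharacter factors $\varepsilon(X,Y_i)$ together with the index shifts produced by the permutations that normalise the $M$-entry are consistent on both sides. Once those matchings are in place no further computation is needed, since each case is literally one of the module axioms (a)--(d) or the Jacobi identity of $L$.
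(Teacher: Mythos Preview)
The paper states this proposition without proof, so there is no argument to compare against; your outline is the standard route for such semidirect-sum constructions. Two refinements are worth noting.

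First, in the ``$\ge 2$ entries in $M$'' case, your claim that \emph{every} bracket carries at least two $M$-arguments is not literally true---for instance, if both $M$-entries lie among the $y_j$, an inner bracket $\{\beta(x_1),\dots,\beta(x_{n-1}),\alpha(y_k)\}$ on the right may receive only one---but the surrounding outer bracket then receives two (the $M$-valued inner result together with the remaining $M$-entry among the $y_j$), so each summand still vanishes and the conclusion holds.

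Second, the step of ``sliding the $M$-entry to a standard slot via skewsymmetry'' is more than bookkeeping in the BiHom setting. Axioms (a)--(b) for the $\omega_i$, and likewise the BiHom-skewsymmetry you establish for $A$, are asserted only for tuples of the shape $(\beta(\cdot),\dots,\beta(\cdot),\alpha(\cdot))$. But several outer brackets in the Jacobi identity carry in their last slot either the inner bracket $\{\beta(y_1),\dots,\alpha(y_n)\}$ or an element $\beta_A^2(y_n)$, neither of which is manifestly of the form $\alpha_A(z)$. Hence you cannot invoke (b) directly to move $m$ among the first $n-1$ outer slots. Without surjectivity of $\alpha$ (or a strengthening of (a)--(b) to hold on arbitrary tuples) the normalisation cannot be justified as written. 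This gap is inherited from the paper's own formulation, which offers no proof; your plan goes through cleanly once either hypothesis is added, and you should state which one you adopt.
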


\begin{proposition}
 Let $(M, \alpha_M, \beta_{M},\omega_i)$ be a module over the $n$-BiHom-Lie algebra \\ $(L, [\cdot, \dots, \cdot], \alpha_L,\beta_{L})$. Consider the direct sum of linear spaces
$A=L\oplus M$. Let us define on $A$ the bracket
$$
\{x_1+m_1, \dots, x_n+m_n\}=[x_1, \dots, x_n]+\sum_{i=1}^{n}\omega_{i}(x_{1},\dots,x_{i-1},m_{i},x_{i+1},\dots,x_n)$$

Then $(A,  \{\cdot,\dots,\cdot\},\alpha_A=\alpha_L+\alpha_M, \beta_A=\beta_L+\beta_M)$ is an $n$-BiHom-Lie algebra.
\end{proposition}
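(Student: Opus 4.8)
The plan is to reduce the statement to the preceding proposition and, for a self-contained argument, to the defining conditions of an $n$-BiHom-Lie module. Since $L$ carries the trivial gradation here ($\Gamma=\{e\}$ and $\varepsilon\equiv 1$), no bicharacter factors intervene. Writing each argument of the bracket as $z_j=x_j+m_j$ with $x_j\in L$ and $m_j\in M$ and using that $[\cdot,\dots,\cdot]$ and each $\omega_i$ are multilinear, one sees immediately that the bracket displayed in the statement is nothing but the multilinear extension of the bracket of the preceding proposition: every monomial carrying two or more $M$-components vanishes, and what remains is exactly the displayed formula. Thus the two brackets agree as maps $A\times\dots\times A\to A$ and the claim is a restatement of the preceding proposition. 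I will nonetheless indicate the direct verification; by multilinearity it is enough to test the three axioms of an $n$-BiHom-Lie algebra on tuples in which each $z_j$ lies entirely in $L$ or entirely in $M$.

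The identity $\alpha_A\circ\beta_A=\beta_A\circ\alpha_A$ is immediate, since both maps act diagonally on $L\oplus M$ while $\alpha_L\beta_L=\beta_L\alpha_L$ and $\alpha_M\beta_M=\beta_M\alpha_M$. For the $n$-BiHom skew-symmetry, any test tuple carrying two or more $M$-entries makes both sides of every transposition identity vanish, so one may assume at most one $M$-entry. If all entries lie in $L$, the identity is just the skew-symmetry of $[\cdot,\dots,\cdot]$ in $L$. If the unique $M$-entry sits in slot $i_0$, then a transposition of two slots both different from $i_0$ is covered by condition a) (skew-symmetry of $\omega_{i_0}$ in its $L$-type arguments), while a transposition exchanging slot $i_0$ with an adjacent slot, and likewise the exchange of the last two slots, is condition b) read in the appropriate direction.

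For the $n$-BiHom Jacobi identity I would argue the same way. A test tuple carrying two or more $M$-entries forces every term on both sides to contain a bracket with at least two $M$-arguments, hence to vanish, so again one may assume at most one $M$-entry. If all entries lie in $L$, the identity is the Jacobi identity of $(L,[\cdot,\dots,\cdot],\alpha_L,\beta_L)$. If the unique $M$-entry is the last argument of the inner bracket (i.e.\ $w_n\in M$ and all other entries in $L$), the identity is precisely condition c). If the unique $M$-entry occupies slot $n-1$ of the outer block of arguments (i.e.\ $z_{n-1}\in M$ and all other entries in $L$), it is precisely condition d). In every remaining case the $M$-entry sits in a non-canonical slot; there I would first use the skew-symmetry just established (equivalently, conditions a) and b)) to move the $M$-entry into the canonical position --- the last slot of the inner bracket if it originates among the $w$'s, or slot $n-1$ of the outer block if it originates among the $z$'s --- then invoke c) or d), and finally move the entry back.

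The step I expect to be the main obstacle is exactly this last repositioning inside the Jacobi identity: one has to verify that the string of signs produced when the $M$-entry is shifted on the left-hand side agrees, summand by summand, with the signs produced by the corresponding shift inside each term of the right-hand side, so that c) and d) apply legitimately. In the ungraded setting these signs are only powers of $-1$ controlled by the displacement of the $M$-entry, and the matching comes down to a routine index count; still, it is the one point that calls for care, the rest being a term-by-term translation through the definitions.
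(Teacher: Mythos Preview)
The paper states both this proposition and the preceding one without proof, so there is nothing to compare against directly. Your argument is correct and is the expected one: you rightly observe that the displayed bracket is the multilinear extension of the piecewise bracket from the preceding proposition, and your case analysis matches the $n$-BiHom-Lie axioms to the module conditions a)--d) in the natural way. The reduction ``two or more $M$-entries make both sides of the Jacobi identity vanish'' does hold (in every term either the inner bracket already carries two $M$-arguments, or its value lies in $M$ and meets a second $M$-argument in the outer bracket), and the sign bookkeeping you flag as the delicate step is indeed routine once $\varepsilon\equiv 1$.
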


\begin{proposition}
 Let $(M_1, \alpha_M^1,\beta_{M}^{1}, \omega_i^1)$ and $(M_2, \alpha_M^2,\beta_{M}^{2}, \omega_i^2)$ be two modules over the $n$-BiHom-Lie algebra $(L, [\cdot, \dots, \cdot], \alpha,\beta)$.
Then  $(M, \alpha_M,\beta_{M}, \omega_i)$ is an $n$-BiHom-Lie module with
$$M=M_1\oplus M_2,\quad \alpha_M=\alpha_M^1\oplus \alpha_M^2,~\quad \beta_M=\beta_M^1\oplus \beta_M^2\quad\mbox{and}\quad\omega_i=\omega_i^1\oplus\omega_i^2.$$
\end{proposition}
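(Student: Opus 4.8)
The plan is to verify directly that the quadruple $(M,\alpha_M,\beta_M,\omega_i)$ satisfies the four conditions a)--d) of the definition of an $n$-BiHom-Lie module, working componentwise. First I would record the structural preliminaries. Since $M_1$ and $M_2$ are $\Gamma$-graded with $M=M_1\oplus M_2$, the grading on $M$ is $M_\gamma=(M_1)_\gamma\oplus(M_2)_\gamma$, so a homogeneous $m\in M_\gamma$ decomposes uniquely as $m=m^{(1)}+m^{(2)}$ with $m^{(j)}\in(M_j)_\gamma$ \emph{of the same degree} $\gamma$; in particular $\varepsilon(m,x)=\varepsilon(m^{(j)},x)$ for every homogeneous $x$ and $j=1,2$. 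The maps $\alpha_M=\alpha_M^1\oplus\alpha_M^2$ and $\beta_M=\beta_M^1\oplus\beta_M^2$ are even and satisfy $\alpha_M\beta_M=\beta_M\alpha_M$ because each pair $(\alpha_M^j,\beta_M^j)$ does, so $(M,\alpha_M,\beta_M)$ is a BiHom-module. I would also note that $\omega_i=\omega_i^1\oplus\omega_i^2$ means $\omega_i(x_1,\dots,m,\dots,x_n)=\omega_i^1(x_1,\dots,m^{(1)},\dots,x_n)+\omega_i^2(x_1,\dots,m^{(2)},\dots,x_n)$, a map $L^{\otimes(i-1)}\otimes M\otimes L^{\otimes(n-i)}\to M$ that sends the $M_j$-slot into $M_j$.

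Condition a) ($\varepsilon$-skew-symmetry in the $x$-type arguments) is immediate, since each $\omega_i^j$ has it and the relevant $\varepsilon$-coefficients involve only the $L$-degrees. Condition b) follows by applying the defining identity of $(M_j,\omega_i^j)$ to $m^{(j)}$, summing over $j=1,2$, and using $\varepsilon(m,x_{i+1})=\varepsilon(m^{(j)},x_{i+1})$ to factor the common scalar out of the sum; the boundary relation between $\omega_{n-1}$ and $\omega_n$ is handled identically.

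For c) and d) I would substitute $m=m^{(1)}+m^{(2)}$ on both sides. On the left the nested expressions $\omega_n(\dots,\omega_n(\dots,\alpha_M(m)))$ (and $\omega_{n-1}(\dots,\beta^2_M(m),\dots)$ in d)) split as a sum over $j$, because $\omega_n$, $\omega_{n-1}$, $\alpha_M$ and $\beta_M$ all preserve each summand $M_j$. On the right every term is linear in the single $M$-slot, and the weights $\varepsilon(X,Y_i)$ depend only on $L$-degrees (in d), on $\sum x_i+m$, with $\overline{m}=\overline{m^{(j)}}$). Hence each side of the displayed identity equals the sum over $j=1,2$ of the corresponding side of the $(M_j,\omega_i^j)$ identity, which holds by hypothesis; equating yields c) and d) for $(M,\omega_i)$.

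The whole argument is essentially bookkeeping, parallel to the earlier module constructions of this section. The only point that genuinely needs attention — and the sole place a careless ``direct sum'' claim could fail — is the grading compatibility, i.e. that a homogeneous element of $M$ has components of the \emph{same} degree in $M_1$ and $M_2$, which is what guarantees that all bicharacter factors are literally identical in the two summands and may therefore be pulled through the direct-sum decomposition. Once this is observed, the four verifications are routine.
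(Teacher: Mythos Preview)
Your proposal is correct and follows essentially the same componentwise approach as the paper: decompose $m=m^{(1)}+m^{(2)}$, apply the module axioms in each summand $M_j$, and reassemble using that every term is linear in the $M$-slot. You are more explicit than the paper about the grading-compatibility step (that the $m^{(j)}$ share the degree of $m$, so the bicharacter factors coincide and can be pulled through the direct sum), a point the paper uses tacitly when it passes from $\varepsilon(m_1,x_{i+1})$ and $\varepsilon(m_2,x_{i+1})$ to a common $\varepsilon(m,x_{i+1})$; note also that the proposition in the paper sits in the trivial-gradation context, where this issue is vacuous.
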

\begin{proof}
We prove the item $b)$.
$$\begin{array}{lllll}&&\omega_i(\beta(x_1), \dots, \beta(x_{i-1}), \beta_{M}(m), \beta(x_{i+1}), \dots, \alpha(x_n))\\[0.2cm]&=&(\omega_i^{1}\oplus\omega_{i}^{2})(\beta(x_1), \dots, \beta(x_{i-1}), (\beta_{M}^{1}\oplus\beta_{M}^{2})(m_{1}+m_{2}), \beta(x_{i+1}), \dots, \alpha(x_n))\\[0.2cm]
&=&\omega_i^{1}(\beta(x_1), \dots, \beta(x_{i-1}), \beta_{M}^{1}(m_{1}), \beta(x_{i+1}), \dots, \alpha(x_n))+\omega_{i}^{2}(\beta(x_1), \dots, \beta(x_{i-1}), \beta_{M}^{2}(m_{2}), \beta(x_{i+1}), \dots, \alpha(x_n))\\[0.2cm]
&=&-\varepsilon(m_1, x_{i+1})\omega_{i+1}^{1}(\beta(x_1), \dots, \beta(x_{i-1}), \beta(x_{i+1}), \beta_{M}^{1}(m_{1}),\dots, \alpha(x_n))\\[0.2cm]
&&-\varepsilon(m_2, x_{i+1})\omega_{i+1}^{2}(\beta(x_1), \dots, \beta(x_{i-1}), \beta(x_{i+1}), \beta_{M}^{2}(m_{2}),\dots, \alpha(x_n))
\\[0.2cm]&=&\varepsilon(m, x_{i+1})(\omega_{i+1}^{1}\oplus\omega_{i+1}^{2})(\beta(x_1), \dots, \beta(x_{i-1}), \beta(x_{i+1}), (\beta_{M}^{1}\oplus\beta_{M}^{2})(m),\dots, \alpha(x_n))\\[0.2cm]
&=&-\varepsilon(m, x_{i+1})\omega_{i+1}(\beta(x_1), \dots, \beta(x_{i-1}), \beta(x_{i+1}),\beta_{M}(m),  \dots, \alpha(x_n))
\end{array}$$
Similarly, we have
$$\omega_{n-1}(\beta(x_1), \dots,\beta(x_{n-2}), \beta_{M}(m),\alpha(x_n))=-\varepsilon(m, x_{n})\omega_{n}(\beta(x_1),\dots, \beta(x_{n-2}),\beta(x_n), \alpha_{M}(m))$$.

So we only prove $c)$, item $d)$ being proved similarly.
$$\begin{array}{llll}
&&\omega_n(\beta^{2}(x_1), \dots, \beta^{2}(x_{n-1}), \omega_n(\beta(y_1), \dots, \beta(y_{n-1}), \alpha_{M}(m)))\nonumber\\[0.2cm]
&=&(\omega_n^{1}\oplus\omega_{n}^{2})(\beta^{2}(x_1), \dots, \beta^{2}(x_{n-1}), (\omega_n^{1}\oplus\omega_{n}^{2})(\beta(y_1), \dots, \beta(y_{n-1}), (\alpha_{M}^{1}\oplus\alpha_{M}^{2})(m_{1}+m_{2}))\nonumber\\[0.2cm]
&=&\omega_n^{1}(\beta^{2}(x_1), \dots, \beta^{2}(x_{n-1}), \omega_n^{1}(\beta(y_1), \dots, \beta(y_{n-1}), \alpha_{M}^{1}(m_{1}))\nonumber\\[0.2cm]
&=&\omega_n^{2}(\beta^{2}(x_1), \dots, \beta^{2}(x_{n-1}), \omega_n^{2}(\beta(y_1), \dots, \beta(y_{n-1}), \alpha_{M}^{2}(m_{2}))\nonumber\\[0.2cm]
&=&\displaystyle{\sum_{i=1}^{n-1}}\varepsilon(X, Y_i)\omega_n^{1}(\beta^{2}(y_1), \dots, \beta^{2}(y_{i-1}),
[\beta(x_1), \dots, \beta(x_{n-1}), \beta(y_i)], \nonumber\\[0.2cm]
&&\beta^{2}(y_{i+1}), \dots, (\beta_{M}^{1})^{2}(m_{1})) +\nonumber\\[0.2cm]
&&  +\varepsilon(X, Y_n)\omega_n^{1}(\beta^{2}(y_1), \dots, \beta^{2}(y_{n-1}), \omega_n^{1}(\beta(x_1), \dots, \beta(x_{n-1}), \alpha_{M}^{1}(m_{1})))\nonumber\\[0.2cm]
&&+\displaystyle{\sum_{i=1}^{n-1}}\varepsilon(X, Y_i)\omega_n^{2}(\beta^{2}(y_1), \dots, \beta^{2}(y_{i-1}),
[\beta(x_1), \dots, \beta(x_{n-1}), \beta(y_i)], \nonumber\\[0.2cm]
&&\beta^{2}(y_{i+1}), \dots, (\beta_{M}^{2})^{2}(m_{2})) +\nonumber\\[0.2cm]
&&  +\varepsilon(X, Y_n)\omega_n^{2}(\beta^{2}(y_1), \dots, \beta^{2}(y_{n-1}), \omega_n^{2}(\beta(x_1), \dots, \beta(x_{n-1}), \alpha_{M}^{2}(m_{2})))\nonumber\\[0.2cm]
&=&\displaystyle{\sum_{i=1}^{n-1}}\varepsilon(X, Y_i)(\omega_n^{1}\oplus \omega_n^{2}) (\beta^{2}(y_1), \dots, \beta^{2}(y_{i-1}),
[\beta(x_1), \dots, \beta(x_{n-1}), \beta(y_i)], \nonumber\\[0.2cm]
&&\beta^{2}(y_{i+1}), \dots, ((\beta_{M}^{1})^{2}\oplus (\beta_{M}^{1})^{2})(m)) +\nonumber\\[0.2cm]
&&  +\varepsilon(X, Y_n)(\omega_n^{1}\oplus \omega_n^{2})(\beta^{2}(y_1), \dots, \beta^{2}(y_{n-1}), (\omega_n^{1}\oplus \omega_n^{2})(\beta(x_1), \dots, \beta(x_{n-1}), (\alpha_{M}^{1}\oplus\alpha_{M}^{2})(m))\nonumber\\[0.2cm]
&=&\displaystyle{\sum_{i=1}^{n-1}}\varepsilon(X, Y_i)\omega_n(\beta^{2}(y_1), \dots, \beta^{2}(y_{i-1}),
[\beta(x_1), \dots, \beta(x_{n-1}), \alpha(y_i)], \beta^{2}(y_{i+1}), \dots, \beta^{2}_M(m))\nonumber\\[0.2cm]
&&+\varepsilon(X, Y_n)\omega_n(\beta^{2}(y_1), \dots, \beta^{2}(y_{n-1}), \omega_n(\beta(x_1), \dots, \beta(x_{n-1}), \alpha_{M}(m))).\nonumber
\end{array}$$
where $x_i, y_j\in\mathcal{H}(L)$, $X=\sum_{i=1}^{n-1}x_i$,  $Y_i=\sum_{j=1}^iy_{j-1}, y_0=e$ and $m\in\mathcal{H}(M)$.
This finishes the proof.
 \end{proof}
\section{Generalized derivation of color Hom-algebras and their color Hom-subalgebras}
\begin{definition}
For any $k\geq 0$, we call $D\in End(L)$ an $(\alpha^k, \beta^r)$-derivation \index{derivation!n-Hom-Lie color algebra} of degree $d$ of the multiplicative $n$-BiHom-Lie color algebra
$(L, [\cdot, \dots, \cdot], \varepsilon, \alpha, \beta)$ if
\begin{eqnarray}
&& \alpha\circ D=D\circ\alpha,\quad \beta\circ D=D\circ\beta, \\
&& D([x_1, \dots, x_n])= \\
&& \sum_{i=1}^n\varepsilon(d, X_{i})[\beta^r\alpha^k(x_1), \dots, \beta^r\alpha^k(x_{i-1}), D(x_i), \beta^r\alpha^k(x_{i+1}), \dots, \beta^r\alpha^k(x_n)].
\label{BakayokoSilvestrov:der} \nonumber
\end{eqnarray}
\end{definition}

We denote the set of $(\alpha^k, \beta^r)$-derivations of the multiplicative $n$-BiHom-Lie color algebra  $L$ by $Der_{(\alpha^k, \beta^r)}(L)$.
For any $D\in Der_{(\alpha^k, \beta^r)}(L)$ and $D'\in Der_{(\alpha^l, \beta^s)}(L)$, let us define their $\varepsilon$-commutator $[D, D']$ as usual:
$$[D, D']=D\circ D'-\varepsilon(d, d')D'\circ D.$$
\begin{lemma}
 For any $D\in Der_{(\alpha^k, \beta^r)}(L)$ and $D'\in Der_{(\alpha^l, \beta^s)}(L)$,
$$[D, D']\in Der_{(\alpha^{k+l}, \beta^{r+s})}(L).$$
\end{lemma}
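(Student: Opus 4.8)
The plan is to verify directly that the $\varepsilon$-commutator $[D,D']=D\circ D'-\varepsilon(d,d')D'\circ D$ satisfies the two defining conditions of an $(\alpha^{k+l},\beta^{r+s})$-derivation of degree $d+d'$. The first condition, commutation with $\alpha$ and $\beta$, is immediate: since $D$ and $D'$ each commute with both $\alpha$ and $\beta$, so does any composite $D\circ D'$ and $D'\circ D$, hence so does their linear combination $[D,D']$. Thus the only real content is the generalized Leibniz rule
\[
[D,D']([x_1,\dots,x_n])=\sum_{i=1}^{n}\varepsilon(d+d',X_i)\,[\beta^{r+s}\alpha^{k+l}(x_1),\dots,[D,D'](x_i),\dots,\beta^{r+s}\alpha^{k+l}(x_n)],
\]
where $X_i=\sum_{j=1}^{i-1}x_j$ (with the same conventions as in the definition).

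First I would expand $D\big(D'([x_1,\dots,x_n])\big)$. Applying the $(\alpha^l,\beta^s)$-derivation property to $D'$ gives a sum over $i$ of brackets with $D'(x_i)$ in slot $i$ and $\beta^s\alpha^l(x_j)$ elsewhere, each carrying $\varepsilon(d',X_i)$. Then I would apply $D$ to each such bracket using the $(\alpha^k,\beta^r)$-derivation property for $D$. This produces a double sum over pairs $(i,j)$: the "diagonal" terms $j=i$ where $D$ lands on the entry $D'(x_i)$, producing $D D'(x_i)$ in slot $i$ surrounded by $\beta^{r+s}\alpha^{k+l}(x_p)$ (using $\alpha D'=D'\alpha$, $\beta D'=D'\beta$ to move the twist maps past $D'$ and merge exponents); and the "off-diagonal" terms $j\neq i$ where $D$ lands on some $\beta^s\alpha^l(x_j)=\beta^s\alpha^l$ applied to $x_j$, which by commutation equals $D(x_j)$ conjugated, giving two twist maps in slot $i$ (namely $\beta^r\alpha^k\beta^s\alpha^l=\beta^{r+s}\alpha^{k+l}$ applied to $x_i$ via $D'$) and $D$ in slot $j$. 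Symmetrically I would expand $D'\big(D([x_1,\dots,x_n])\big)$, and then form $D\circ D'-\varepsilon(d,d')D'\circ D$.

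The crux is bookkeeping of the bicharacter factors so that the off-diagonal terms cancel. In the off-diagonal term of $DD'$ with $D'$ in slot $i$ and $D$ in slot $j$, the accumulated sign is $\varepsilon(d',X_i)\varepsilon(d,X_j)$ together with whatever sign arises from $D$ passing the degree-$d'$ output $D'(x_i)$ when $j>i$ (by the skew-symmetry/multilinearity conventions used in the definition). In the corresponding term of $\varepsilon(d,d')D'\circ D$, the roles of $i,j$ are swapped and an extra $\varepsilon(d,d')$ appears; using the bicharacter identities $\varepsilon(a,b)\varepsilon(b,a)=1$ and bi-additivity, together with $\varepsilon(d,d')^{-1}=\varepsilon(d',d)$, one checks these two contributions are negatives of each other and cancel in the commutator. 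What survives is exactly the diagonal sum $\sum_i \big(\varepsilon(d',X_i)\,[\dots,DD'(x_i),\dots]-\varepsilon(d,d')\varepsilon(d,X_i)\,[\dots,D'D(x_i),\dots]\big)$; pulling out the common twisted entries and recognizing $\varepsilon(d,d')\varepsilon(d,X_i)=\varepsilon(d+d',X_i)\cdot\varepsilon(d',X_i)\varepsilon(d,X_i)^{-1}\cdot(\dots)$—more cleanly, after normalizing both terms to the factor $\varepsilon(d+d',X_i)$ one gets $[\dots,(DD'-\varepsilon(d,d')D'D)(x_i),\dots]=[\dots,[D,D'](x_i),\dots]$, which is the claimed identity. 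I expect the sign bookkeeping in the off-diagonal cancellation to be the only delicate point; the rest is routine manipulation using the bicharacter axioms and the commutation of $D,D'$ with $\alpha,\beta$.
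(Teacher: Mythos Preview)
Your proposal is correct and follows essentially the same route as the paper: expand $DD'$ and $D'D$ on a bracket into double sums, split into diagonal ($j=i$) and off-diagonal ($j\neq i$) contributions, use the bicharacter identities and a reindexing to cancel the off-diagonal pieces, and combine the surviving diagonal terms into $[D,D'](x_i)$ with coefficient $\varepsilon(d+d',X_i)$. One small correction: in your displayed diagonal sum the coefficients should be $\varepsilon(d',X_i)\varepsilon(d,X_i)=\varepsilon(d+d',X_i)$ on the $DD'(x_i)$ term and $\varepsilon(d,d')\varepsilon(d,X_i)\varepsilon(d',X_i)$ on the $D'D(x_i)$ term, which is exactly what your ``more cleanly'' remark anticipates.
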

\begin{proof}
For any $D\in Der_{(\alpha^k, \beta^r)}(L)$ and $D'\in Der_{(\alpha^l, \beta^s)}(L)$ and any $x_1, \dots, x_{n-1}, x_n\in\mathcal{H}(L)$,
\begin{eqnarray}
&&\qquad\qquad\qquad [D, D']([x_1, \dots, x_n])=\nonumber\\
&&=(DD'-\varepsilon(d, d')D'D)([x_1, \dots, x_n])\nonumber\\
&&=DD'([x_1, \dots, x_n])-\varepsilon(d, d')D'D([x_1, \dots, x_n])\nonumber\\
&&=\sum_{i=1}^n\varepsilon(d', X_i)D[\beta^s\alpha^s(x_1), \dots, D'(x_i), \dots, \beta^s\alpha^s(x_n)]
-\sum_{i=1}^n\varepsilon(d, X_i)D'[\beta^r\alpha^k(x_1), \dots, D(x_i), \dots, \beta^r\alpha^k(x_n)]\nonumber\\
&&=\sum_{i=2}^n\sum_{j<i, j=1}^n\varepsilon(d', X_i)\varepsilon(d, X_j)[\beta^{r+s}\alpha^{k+l}(x_1), \dots, D\beta^{l}\alpha^s(x_j), \dots, \beta^{r}\alpha^kD'(x_i), \dots, \beta^{r+s}\alpha^{k+l}(x_n)]\nonumber\\
&&\qquad\qquad+\sum_{i=1}^n\varepsilon(d', X_i)\varepsilon(d, X_i)[\beta^{r+s}\alpha^{k+l}(x_1), \dots, DD'(x_i), \dots,
\beta^{r+s}\alpha^{k+l}(x_n)]\nonumber\\
&&\qquad+\sum_{i=1}^n\sum_{j>i, j=2}^n\varepsilon(d', X_i)\varepsilon(d, d'+X_j)[\beta^{r+s}\alpha^{k+l}(x_1), \dots, \beta^{r}\alpha^kD'(x_i),
\dots, D\beta^{l}\alpha^s(x_j), \beta^{r+s}\alpha^{k+l}(x_n)]\nonumber\\
&&\qquad\qquad-\sum_{i=2}^n\sum_{j<i, j=1}^n\varepsilon(d, d')\varepsilon(d, X_i)\varepsilon(d', X_j)[\beta^{r+s}\alpha^{k+l}(x_1), \dots,
 D'\beta^{l}\alpha^s(x_j), \dots, \beta^{r}\alpha^kD(x_i), \nonumber\\
&&\qquad\dots, \beta^{r+s}\alpha^{k+l}(x_n)]\nonumber\\
&&\qquad-\sum_{i=1}^n\varepsilon(d, d')\varepsilon(d, X_i)\varepsilon(d', X_i)[\beta^{r+s}\alpha^{k+l}(x_1), \dots, D'D(x_i), \dots,
\beta^{r+s}\alpha^{k+l}(x_n)]\nonumber\\
&&\qquad\qquad-\sum_{i=1}^n\sum_{j>i, j=2}^n\varepsilon(d, d')\varepsilon(d, X_i)\varepsilon(d', d+X_j)[\beta^{r+s}\alpha^{k+l}(x_1),
\dots, \beta^{l}\alpha^sD(x_i), \dots, D'\beta^{r}\alpha^k(x_j),\nonumber\\
&&\qquad \dots, \beta^{r+s}\alpha^{k+l}(x_n)]\nonumber.\nonumber
\end{eqnarray}
However,
\begin{eqnarray}
&&\qquad\sum_{i=1}^n\sum_{j>i, j=2}^n\varepsilon(d, d')\varepsilon(d, X_i)\varepsilon(d', d+X_j)
[\beta^{r+s}\alpha^{k+l}(x_1), \dots, \beta^{l}\alpha^sD(x_i), \dots, D'\beta^{r}\alpha^k(x_j), \dots, \beta^{r+s}\alpha^{k+l}(x_n)]=\nonumber\\
&&=\sum_{i=1}^n\sum_{j>i, j=2}^n\varepsilon(d, X_i)\varepsilon(d', X_j)
[\beta^{r+s}\alpha^{k+l}(x_1), \dots, \beta^{l}\alpha^sD(x_i), \dots, D'\beta^{r}\alpha^k(x_j), \dots, \beta^{r+s}\alpha^{k+l}(x_n)]\nonumber\\
&&=\sum_{j=1}^n\sum_{j<i, i=2}^n\varepsilon(d, X_j)\varepsilon(d', X_i)
[\beta^{r+s}\alpha^{k+l}(x_1), \dots, \beta^{l}\alpha^sD(x_j), \dots, D'\beta^{r}\alpha^k(x_i), \dots, \beta^{r+s}\alpha^{k+l}(x_n)]\nonumber\\
&&=\sum_{i=2}^n\sum_{j<i, j=1}^n\varepsilon(d, X_j)\varepsilon(d', X_i)
[\beta^{r+s}\alpha^{k+l}(x_1), \dots, \beta^{l}\alpha^sD(x_j), \dots, D'\beta^{r}\alpha^k(x_i), \dots, \beta^{r+s}\alpha^{k+l}(x_n)]\nonumber.
\end{eqnarray}
Thus,
\begin{eqnarray}
 [D, D']([x_1, \dots, x_n])
&=&\sum_{i=1}^n\varepsilon(d+d', X_i)\Big([\beta^{r+s}\alpha^{k+l}(x_1), \dots, DD'(x_i), \dots, \beta^{r+s}\alpha^{k+l}(x_n)]\nonumber\\
&&-\varepsilon(d, d')[\beta^{r+s}\alpha^{k+l}(x_1), \dots, D'D(x_i), \dots, \beta^{r+s}\alpha^{k+l}(x_n)]\Big)\nonumber\\
&=&\sum_{i=1}^n\varepsilon(d+d', X_i)[\beta^{r+s}\alpha^{k+l}(x_1), \dots, [D, D'](x_i), \dots, \beta^{r+s}\alpha^{k+l}(x_n)]\nonumber,
\end{eqnarray}
which means that $[D, D']\in Der_{(\alpha^{k+l}, \beta^{r+s})}(L)$.
\end{proof}

\begin{lemma}\cite{} \label{asslie}
 If $A, \cdot, \varepsilon, \alpha, \beta)$ is a regular BiHom-associative color algbebra, then
$$L(A)=(A, [-, -], \varepsilon, \alpha, \beta)$$
is also a regular BiHom-Lie color algebra with
$$[x, y]=x\cdot y-\varepsilon(x, y)(\alpha^{-1}\beta(y))(\alpha\beta^{-1}(x)),$$
for any $x, y\in\mathcal{H}(A)$.
\end{lemma}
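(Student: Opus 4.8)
The statement asserts that the BiHom-analogue of the classical commutator construction sends a regular BiHom-associative color algebra to a BiHom-Lie color algebra, so the plan is to verify, one by one, the defining axioms of a (binary) BiHom-Lie color algebra for $L(A)=(A,[-,-],\varepsilon,\alpha,\beta)$: the compatibility $\alpha\circ\beta=\beta\circ\alpha$, the $\varepsilon$-BiHom-skewsymmetry, and the $\varepsilon$-BiHom-Jacobi identity (the binary case of item~(3) in the definition of an $n$-BiHom-Lie color algebra), together with regularity. Since $\alpha$ and $\beta$ are carried over unchanged from $A$, the relation $\alpha\beta=\beta\alpha$ and the bijectivity of $\alpha,\beta$ are immediate; what remains is to check that $\alpha$ and $\beta$ are algebra automorphisms for the new bracket and the two color identities. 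I will use freely the consequences $\alpha^{-1}\beta\alpha=\beta$, $\alpha\beta^{-1}\beta=\alpha$ of $\alpha\beta=\beta\alpha$, and the distributivity of $\alpha^{\pm1}$ and $\beta^{\pm1}$ over the product (valid since $\alpha,\beta$ are algebra automorphisms).

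First I would settle the two easy points. Because $\alpha$ is an algebra automorphism of $(A,\cdot)$ commuting with $\beta$, it commutes with $\alpha^{-1}\beta$ and $\alpha\beta^{-1}$, so $\alpha([x,y])=\alpha(x\cdot y)-\varepsilon(x,y)\,\alpha\big((\alpha^{-1}\beta(y))\cdot(\alpha\beta^{-1}(x))\big)=[\alpha(x),\alpha(y)]$, and symmetrically $\beta([x,y])=[\beta(x),\beta(y)]$. For the $\varepsilon$-BiHom-skewsymmetry, substituting and simplifying the twists with $\alpha^{-1}\beta\alpha=\beta$ and $\alpha\beta^{-1}\beta=\alpha$ yields
\[
[\beta(x),\alpha(y)]=\beta(x)\cdot\alpha(y)-\varepsilon(x,y)\,\beta(y)\cdot\alpha(x);
\]
interchanging $x$ and $y$ and multiplying by $-\varepsilon(x,y)$ reproduces this expression because $\varepsilon(x,y)\varepsilon(y,x)=1$.

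The substantive step is the $\varepsilon$-BiHom-Jacobi identity. I would expand the nested bracket $\big[\beta^{2}(x),[\beta(y),\alpha(z)]\big]$, and the two further brackets obtained by cyclically permuting $x,y,z$, using the definition of $[-,-]$ twice each; after simplifying the composite twists via $\alpha^{-1}\beta\alpha=\beta$ and $\alpha\beta^{-1}\beta=\alpha$ and using distributivity, every one of the resulting double products is of the form $p\cdot(q\cdot r)$ or $(q\cdot r)\cdot p$ with $\{p,q,r\}$ a twisted copy of $\{x,y,z\}$. Applying BiHom-associativity $\alpha(a)\cdot(b\cdot c)=(a\cdot b)\cdot\beta(c)$ (and using the skewsymmetry already proved to re-bracket the wrong-order terms) I would bring all of them to a single normal form, carrying the bicharacter coefficients along with $\varepsilon(a,b+c)=\varepsilon(a,b)\varepsilon(a,c)$, $\varepsilon(a+b,c)=\varepsilon(a,c)\varepsilon(b,c)$ and $\varepsilon(a,b)\varepsilon(b,a)=1$. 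The terms should then cancel in pairs, exactly as in the ungraded implication ``BiHom-associative $\Rightarrow$ BiHom-Lie'', the bicharacter being the only extra piece of bookkeeping.

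The main obstacle is this last computation: it is calculation-heavy and delicate, since a single misplaced $\alpha$- or $\beta$-twist, or a mismatched bicharacter exponent, destroys the cancellation. To keep it manageable I would first record the auxiliary identities above together with the rewritten associativity law $(a\cdot b)\cdot\beta(c)=\alpha(a)\cdot(b\cdot c)$ as a short preliminary, and then arrange the Jacobi verification so that the color signs factor out cleanly and one is reduced to checking the plain (ungraded) BiHom-Jacobi identity, which is the route least prone to sign errors. Regularity of $L(A)$ is then automatic, since $\alpha,\beta$ remain bijective and, by the first step, are automorphisms of $[-,-]$.
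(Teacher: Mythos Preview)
The paper does not give its own proof of this lemma: it is stated with a (blank) citation \verb|\cite{}| and used immediately thereafter, so there is nothing to compare against. Your plan is the standard direct verification and is sound; the simplifications $\alpha^{-1}\beta\alpha=\beta$, $\alpha\beta^{-1}\beta=\alpha$ and the resulting formula $[\beta(x),\alpha(y)]=\beta(x)\cdot\alpha(y)-\varepsilon(x,y)\,\beta(y)\cdot\alpha(x)$ are correct and make the skewsymmetry immediate.

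One small point of language: the binary $\varepsilon$-BiHom-Jacobi identity in this paper is not literally a cyclic sum but rather
\[
[\beta^{2}(x),[\beta(y),\alpha(z)]]=[[\beta(x),\alpha(y)],\beta^{2}(z)]+\varepsilon(x,y)\,[\beta^{2}(y),[\beta(x),\alpha(z)]],
\]
so ``cyclically permuting $x,y,z$'' is a slight misdescription of the three double brackets you actually need to expand. Other than that, your strategy of expanding each of the three nested brackets into four products, normalising the twists, and cancelling via BiHom-associativity $(\alpha(a))\cdot(b\cdot c)=(a\cdot b)\cdot\beta(c)$ together with the bicharacter identities is exactly the right route, and your observation that regularity and multiplicativity for $[-,-]$ are inherited from $(A,\cdot)$ is correct.
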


Let us denote by  $Der(L)=\oplus_{k, r\geq 0}Der_{(\alpha^k, \beta^{r})}(L)$ and $ \widehat{Der}(L)=\{D\in Der(L), D\circ\alpha=\beta\circ D\}$.
\begin{proposition}
$(L, [\cdot, \dots, \cdot], \varepsilon, \alpha, \beta)$ be a regular $n$-BiHom-Lie color algebra. Then,
$(\widehat{Der}(L), [\cdot, \cdot], \varepsilon, \omega, \Omega)$ is a BiHom-Lie color algebra, with $\omega (D)=D\circ \alpha$ and $\Omega (D)=D\circ \beta$.
\end{proposition}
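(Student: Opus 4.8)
The plan is to check, one BiHom--Lie color algebra axiom at a time, that $(\widehat{Der}(L),[\cdot,\cdot],\varepsilon,\omega,\Omega)$ satisfies the definition in the case $n=2$. The whole argument rests on one elementary observation: every $D\in Der(L)$ commutes with $\alpha$ and with $\beta$, and $\alpha\beta=\beta\alpha$, so in any composition of endomorphisms the twisting maps $\alpha,\beta$ can be collected on the right; once this is done, each BiHom axiom for $\widehat{Der}(L)$ collapses onto a classical identity for the $\varepsilon$-commutator in the associative color algebra $(End(L),\circ)$.

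First I would set up the underlying data. The space $\widehat{Der}(L)$ is $\Gamma$-graded by the degree $d$ of a homogeneous derivation, and $\omega(D)=D\circ\alpha$, $\Omega(D)=D\circ\beta$ are even because $\alpha,\beta$ are even. For stability under $\omega$: if $D\in Der_{(\alpha^k,\beta^r)}(L)$ then, using the multiplicativity of $\alpha$ (available since $L$ is regular), a direct substitution shows $D\circ\alpha\in Der_{(\alpha^{k+1},\beta^r)}(L)$; it still commutes with $\alpha$ and $\beta$, and $D\alpha=\beta D$ gives $(D\alpha)\alpha=\beta(D\alpha)$, so $\omega(D)\in\widehat{Der}(L)$; the argument for $\Omega$ is symmetric. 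The first axiom is immediate, since $\omega\Omega(D)=D\beta\alpha=D\alpha\beta=\Omega\omega(D)$. For stability of $\widehat{Der}(L)$ under the bracket I would quote the already--proved Lemma, which gives $[D,D']\in Der_{(\alpha^{k+l},\beta^{r+s})}(L)$, and add the one--line check
$[D,D']\circ\alpha=DD'\alpha-\varepsilon(d,d')D'D\alpha=\beta DD'-\varepsilon(d,d')\beta D'D=\beta\circ[D,D']$,
which uses $D\alpha=\beta D$, $D'\alpha=\beta D'$ together with $D\beta=\beta D$, $D'\beta=\beta D'$; hence $[D,D']\in\widehat{Der}(L)$.

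Next, for the $\varepsilon$-BiHom-skewsymmetry I would record the elementary fact that if $P,Q\in End(L)$ commute with $\alpha$ and $\beta$, then $[\,P\varphi,\,Q\psi\,]=[P,Q]\,\varphi\psi$ for any words $\varphi,\psi$ in $\alpha,\beta$ (one simply commutes $\varphi$ past $Q$ and $\psi$ past $P$). Taking $\varphi=\beta$, $\psi=\alpha$ gives $[\Omega(D),\omega(D')]=[D,D']\,\alpha\beta$; and since $[D,D']=-\varepsilon(d,d')[D',D]$ reads off $[D,D']=DD'-\varepsilon(d,d')D'D$ together with $\varepsilon(d,d')\varepsilon(d',d)=1$, we obtain $[\Omega(D),\omega(D')]=-\varepsilon(d,d')[\Omega(D'),\omega(D)]$, which is exactly the skewsymmetry for $n=2$.

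Finally, for the $\varepsilon$-BiHom-Jacobi identity I would expand the $n=2$ instance of axiom (3) from the preliminaries (with $y_0=e$ of degree $0$), using the same observation in the form $[\omega^a\Omega^b(P),\omega^c\Omega^e(Q)]=[P,Q]\,\alpha^{a+c}\beta^{b+e}$ together with $[\Omega(D_i),\omega(D_j)]=\omega\Omega([D_i,D_j])$. The left-hand side becomes $[D_1,[D_2,D_3]]\,\alpha\beta^3$ and the two right-hand terms become $[[D_1,D_2],D_3]\,\alpha\beta^3$ and $\varepsilon(d_1,d_2)[D_2,[D_1,D_3]]\,\alpha\beta^3$, so the whole identity is equivalent to
$[D_1,[D_2,D_3]]=[[D_1,D_2],D_3]+\varepsilon(d_1,d_2)[D_2,[D_1,D_3]]$,
the graded Jacobi identity for the $\varepsilon$-commutator in $(End(L),\circ)$, which one verifies in a single expansion. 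I expect the main obstacle to be purely bookkeeping: carrying the $\varepsilon$-factors and the twist-exponents correctly through the compositions, and reading off the $n=2$ specializations of the skewsymmetry and Jacobi axioms accurately; there is no conceptual difficulty once the twists are pushed to the right.
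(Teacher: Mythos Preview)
Your direct verification is correct and complete: you check closure of $\widehat{Der}(L)$ under $\omega,\Omega$ and $[\cdot,\cdot]$, establish $\omega\Omega=\Omega\omega$, and then reduce the BiHom skewsymmetry and BiHom Jacobi identities to the ordinary $\varepsilon$-skewsymmetry and graded Jacobi identity for the color commutator in the associative algebra $(End(L),\circ)$, using throughout that every $D\in Der(L)$ commutes with $\alpha$ and $\beta$ so that the twists can be collected on the right.

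The paper, however, proceeds differently. Rather than verifying the axioms one by one, it invokes a structural shortcut: it observes that $(\widehat{Der}(L),\circ,\varepsilon,\omega,\Omega)$ is a (regular) BiHom-associative color algebra and then applies Lemma~\ref{asslie}, which manufactures a BiHom-Lie color algebra from any regular BiHom-associative one via the twisted commutator $[x,y]=xy-\varepsilon(x,y)(\omega^{-1}\Omega(y))(\omega\Omega^{-1}(x))$. On $\widehat{Der}(L)$ this twisted commutator collapses to the ordinary $\varepsilon$-commutator because the elements commute with $\alpha,\beta$. What the paper's route buys is brevity and a conceptual explanation of \emph{why} the extra condition $D\alpha=\beta D$ is singled out (it is what makes $\omega,\Omega$ interact correctly with composition so that Lemma~\ref{asslie} applies); what your route buys is self-containment and transparency, since you never need to unpack what ``BiHom-associative'' means here or check that the lemma's hypotheses are met. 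Both arguments ultimately rest on the same elementary commutation facts, just packaged differently.
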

\begin{proof}
 It follows from the simple fact that $( \widehat{Der}(L), \circ, \varepsilon, \omega, \Omega)$ is a BiHom-associative color algebra and
Lemma \ref{asslie}.
\end{proof}

\begin{proposition}
 $(L, [\cdot, \dots, \cdot], \varepsilon, \alpha, \beta)$ be an $n$-BiHom-Lie color algebra and $\widetilde{End}(L)$ the subspace of $End(L)$ defined
by $$\widetilde{End}(L)=\{f\in End(L)/ f\circ\alpha=\alpha\circ f, f\circ \beta=\beta\circ f\}.$$
Then, $(\widetilde{End}(L), [-, -], \varepsilon, \omega, \Omega)$ is a Bihom-lie color algeba with $\omega(D)=D\circ\alpha, \Omega(D)=D\circ\beta$
and $[D, D']=DD'-\varepsilon(d, d')D'D$.
In particular, $(Der(L), [-, -], \varepsilon, \omega, \Omega)$ is a Bihom-Lie color algebra.
\end{proposition}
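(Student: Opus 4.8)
The plan is to reduce everything to the classical fact that the $\varepsilon$-commutator of a $\Gamma$-graded associative algebra is a color Lie algebra, using that on $\widetilde{End}(L)$ the twisting maps $\omega$ and $\Omega$ are nothing but left-composition with $\alpha$ and $\beta$.

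First I would record that $\widetilde{End}(L)$, graded by declaring $D$ homogeneous of degree $d$ when $D(L_\gamma)\subseteq L_{\gamma+d}$ for all $\gamma\in\Gamma$, is a $\Gamma$-graded associative subalgebra of $End(L)$: it is closed under composition because $f\alpha=\alpha f$ and $g\alpha=\alpha g$ force $(g\circ f)\alpha=\alpha(g\circ f)$, and likewise for $\beta$, while the composite of homogeneous maps is homogeneous with additive degree. Hence it is also closed under the $\varepsilon$-commutator $[D,D']=D\circ D'-\varepsilon(d,d')D'\circ D$, and $(\widetilde{End}(L),[-,-],\varepsilon)$ is a color Lie algebra; in particular $[D',D]=-\varepsilon(d',d)[D,D']$ and the color Jacobi identity $[D_1,[D_2,D_3]]=[[D_1,D_2],D_3]+\varepsilon(d_1,d_2)[D_2,[D_1,D_3]]$ hold. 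Next I would check that $\omega(D)=D\alpha$ and $\Omega(D)=D\beta$ are well-defined even linear endomorphisms of $\widetilde{End}(L)$ which commute with one another: $D\alpha$ still commutes with $\alpha$ and $\beta$, has the same degree as $D$, and $\omega\Omega(D)=D\beta\alpha=D\alpha\beta=\Omega\omega(D)$, which is axiom~(1) of a BiHom-Lie color algebra.

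The key computation is the identity $[\Omega(D),\omega(D')]=\alpha\beta\circ[D,D']$ for homogeneous $D,D'$. Since every element of $\widetilde{End}(L)$ commutes with $\alpha$ and $\beta$ we may write $\Omega(D)=\beta D$ and $\omega(D')=\alpha D'$, whence $[\Omega(D),\omega(D')]=\beta D\alpha D'-\varepsilon(d,d')\alpha D'\beta D=\alpha\beta\bigl(DD'-\varepsilon(d,d')D'D\bigr)$. The $\varepsilon$-BiHom-skewsymmetry is then immediate from the skewsymmetry of $[-,-]$ together with $\varepsilon(d,d')\varepsilon(d',d)=1$. For the BiHom-Jacobi identity I would expand its three terms in the same way: using $\Omega^2(D)=D\beta^2$ and applying the above identity twice, the left-hand side $[\Omega^2(D_1),[\Omega(D_2),\omega(D_3)]]$ becomes $\alpha\beta^3\circ[D_1,[D_2,D_3]]$, while the two right-hand terms become $\alpha\beta^3\circ[[D_1,D_2],D_3]$ and $\varepsilon(d_1,d_2)\,\alpha\beta^3\circ[D_2,[D_1,D_3]]$ respectively; the identity then follows by composing the color Jacobi identity above with $\alpha\beta^3$ on the left. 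The only thing requiring care here is the bicharacter bookkeeping, i.e.\ using $\varepsilon(d_1,d_2+d_3)=\varepsilon(d_1,d_2)\varepsilon(d_1,d_3)$ and its analogues so that the prefactors on the two sides match exactly.

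Finally, for the ``in particular'' statement I would show that $Der(L)=\bigoplus_{k,r\ge0}Der_{(\alpha^k,\beta^r)}(L)$ is a BiHom-subalgebra of $(\widetilde{End}(L),[-,-],\varepsilon,\omega,\Omega)$, hence inherits the structure. It sits inside $\widetilde{End}(L)$ because the first defining condition of an $(\alpha^k,\beta^r)$-derivation is exactly commutation with $\alpha$ and $\beta$; it is closed under $[-,-]$ by the Lemma already proved ($[D,D']\in Der_{(\alpha^{k+l},\beta^{r+s})}(L)$ for $D\in Der_{(\alpha^k,\beta^r)}(L)$, $D'\in Der_{(\alpha^l,\beta^s)}(L)$); and a short computation using the multiplicativity of $\alpha$ and $\beta$ shows $\omega\bigl(Der_{(\alpha^k,\beta^r)}(L)\bigr)\subseteq Der_{(\alpha^{k+1},\beta^r)}(L)$ and $\Omega\bigl(Der_{(\alpha^k,\beta^r)}(L)\bigr)\subseteq Der_{(\alpha^k,\beta^{r+1})}(L)$, so $\omega$ and $\Omega$ preserve $Der(L)$. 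Beyond the sign/index bookkeeping in the BiHom-Jacobi step (and the mild convention that $End$ here means the algebra of graded endomorphisms, so that the degree $d$ of $D$ is defined), I do not anticipate any real difficulty.
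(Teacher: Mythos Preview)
The paper states this proposition without proof, so there is nothing to compare against directly; the only nearby hint is the one-line argument for the preceding proposition about $\widehat{Der}(L)$, which goes through the BiHom-associative $\Rightarrow$ BiHom-Lie lemma. Your direct verification is correct and is in fact the right route here: the key identity $[\Omega(D),\omega(D')]=\alpha\beta\circ[D,D']$ (valid because every element of $\widetilde{End}(L)$ commutes with both $\alpha$ and $\beta$) reduces the BiHom-skewsymmetry and the BiHom-Jacobi identity to the ordinary color skewsymmetry and color Jacobi identity of the $\varepsilon$-commutator, post-composed with $\alpha\beta$ and $\alpha\beta^3$ respectively. Your handling of the ``in particular'' clause, via the earlier lemma $[Der_{(\alpha^k,\beta^r)}(L),Der_{(\alpha^l,\beta^s)}(L)]\subseteq Der_{(\alpha^{k+l},\beta^{r+s})}(L)$ together with $\omega(Der_{(\alpha^k,\beta^r)}(L))\subseteq Der_{(\alpha^{k+1},\beta^r)}(L)$ and the analogous statement for $\Omega$, is also correct.

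One remark worth making: the template proof the paper uses for $\widehat{Der}(L)$ would \emph{not} transfer verbatim to $\widetilde{End}(L)$. That argument invokes the regular BiHom-associative $\Rightarrow$ BiHom-Lie lemma, which requires invertible twists and produces the bracket $xy-\varepsilon(x,y)(\alpha^{-1}\beta(y))(\alpha\beta^{-1}(x))$, not the plain $\varepsilon$-commutator; moreover $(\widetilde{End}(L),\circ,\omega,\Omega)$ satisfies $\omega(D)\circ(D'\circ D'')=D D'D''\alpha$ while $(D\circ D')\circ\Omega(D'')=DD'D''\beta$, so it is not BiHom-associative unless $\alpha=\beta$. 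Your explicit reduction to the classical color-Lie identities is therefore not just a different packaging but genuinely what is needed.
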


\begin{definition}
 An endomorphism $D$ of degree $d$ of a multiplicative $n$-BiHom-Lie color algebra $(L, [\cdot, \dots, \cdot], \varepsilon, \alpha, \beta)$ is called a generalized
$(\alpha^k, \beta^r)$-derivation \index{derivation!generalised!multiplicative n-Hom-Lie color algebra} if there exist linear mappings $D', D'',
 \dots,\\ D^{(n-1)}, D^{(n)}$ of degree $d$ such that for any $x_1,\dots,x_n\in\mathcal{H}(L)$:
\begin{eqnarray}
&& D\circ\alpha=\alpha\circ D,\;\;D\circ\beta=\beta\circ D \;\; D^{(i)}\circ\alpha=\alpha\circ D^{(i)}\;\;\mbox{and}\;\; D^{(i)}\circ\beta=\beta\circ D^{(i)},\\
&& \begin{array}{l}
D^{(n)}([x_1, \dots, x_n])= \\
\displaystyle \sum_{i=1}^n\varepsilon(d, X_{i})[\alpha^k\beta^r(x_1), \dots, \alpha^k\beta^r(x_{i-1}), D^{(i-1)}(x_i), \alpha^k\beta^r(x_{i+1}),
 \dots, \alpha^k\beta^r(x_n)]. \label{alphaderivation}
\end{array}
\end{eqnarray}
An $(n+1)$-tuple $(D, D', D'', \dots, D^{(n-1)}, D^{(n)})$ is called an $(n+1)$-ary $(\alpha^k, \beta^r)$-derivation.
\end{definition}
The set of generalized $(\alpha^k, \beta^r)$-derivation is denoted by $GDer_{(\alpha^k, \beta^r)}(L)$.
Set $$GDer(L)=\oplus_{k\geq 0}\oplus_{r\geq 0}GDer_{(\alpha^k, \beta^r)}(L).$$

\begin{definition}
Let$(L, [\cdot, \dots, \cdot], \varepsilon, \alpha, \beta)$ be a multiplicative $n$-BiHom-Lie color algebra . A linear mapping $D\in End(L)$ is said to be an
$(\alpha^k, \beta^r)$-quasiderivation of degree $d$  \index{quasiderivation!multiplicative n-Hom-Lie color algebra} if there exists a $D'\in End(L)$ of degree $d$ such that
\begin{eqnarray}
 &&D\circ\alpha=\alpha\circ D,\quad D'\circ\alpha=\alpha\circ D',\\
&&D\circ\beta=\beta\circ D,\quad D'\circ\beta=\beta\circ D',\\
&& D^{'}([x_1, \dots, x_n]) =
 \sum_{i=1}^n\varepsilon(d, X_{i})[\alpha^k\beta^r(x_1), \dots, \alpha^k\beta^r(x_{i-1}), D(x_i), \alpha^k\beta^r(x_{i+1}), \dots, \alpha^k\beta^r(x_n)]
\end{eqnarray}
for all $x_1,\dots,x_n\in\mathcal{H}(L)$
\end{definition}
We call $D'$ the endomorphism associated to the $(\alpha^k, \beta)$-quasiderivation $D$.
The set of ${(\alpha^k, \beta^r)}$-quasiderivations will be denoted $QDer_{(\alpha^k, \beta^r)}(L)$. Set $QDer(L)=\oplus_{k\geq 0}\oplus_{r\geq 0}QDer_{(\alpha^k, \beta^r)}(L)$.

\begin{definition}\label{cet}
 Let$(L, [\cdot, \dots, \cdot], \varepsilon, \alpha, \beta)$ be a multiplicative $n$-BiHom-Lie color algebra . The set $C_{(\alpha^k, \beta^r)}(L)$ consisting of linear mapping
 $D$ of degree $d$ with
the property
\begin{eqnarray}
&&D\circ\alpha=\alpha\circ D,\quad D\circ\beta=\beta\circ D,\\
&& D([x_1, \dots, x_n])=
 \varepsilon(d, X_{i})[\alpha^k\beta^r(x_1), \dots, \alpha^k\beta^r(x_{i-1}), D(x_i), \alpha^k\beta^r(x_{i+1}), \dots, \alpha^k\beta^r(x_n)]
\label{alphacentroid}
\end{eqnarray}
for all $x_1,\dots,x_n\in\mathcal{H}(L)$, is called the $(\alpha^k, \beta^r)$-centroid of $L$.
\index{centroid!multiplicative n-Hom-Lie color algebra}
\end{definition}

\begin{definition}
 Let$(L, [\cdot, \dots, \cdot], \varepsilon, \alpha, \beta)$ be a multiplicative $n$-BiHom-Lie color algebra . The set $QC_{(\alpha^k, \beta^r)}(L)$ consisting of linear mapping
 $D$ of degree $d$ with
the property
\begin{eqnarray}
&&D\circ\alpha=\alpha\circ D, \quad D\circ\beta=\beta\circ D\\
&& [D(x_1), \alpha^k\beta^r(x_2), \dots, \alpha^k\beta^r(x_n)]= \\
&& \varepsilon(d, X_{i})[\alpha^k\beta^r(x_1), \dots, \alpha^k\beta^r(x_{i-1}), D(x_i), \alpha^k\beta^r(x_{i+1}), \dots, \alpha^k\beta^r(x_n)],
\nonumber
\end{eqnarray}
for all $x_1,\dots,x_n\in\mathcal{H}(L)$, is called the $(\alpha^k, \beta^r)$-quasicentroid of $L$.
\index{quasicentroid!multiplicative n-Hom-Lie color algebra}
\end{definition}

\begin{definition}
  Let$(L, [\cdot, \dots, \cdot], \varepsilon, \alpha, \beta)$ be a multiplicative $n$-BiHom-Lie color algebra. The set $ZDer_{(\alpha^k, \beta^r)}(L)$ consisting of linear mappings $D$ of degree $d$, such that for all $x_1,\dots,x_n\in\mathcal{H}(L)$:
\begin{eqnarray}
&&D\circ\alpha=\alpha\circ D, \quad D\circ\beta=\beta\circ D\nonumber\\
&& D([x_1, \dots, x_n])=  \varepsilon(d, X_i)[\alpha^k\beta^r(x_1), \dots, \alpha^k\beta^r(x_{i-1}), D(x_i), \alpha^k\beta^r(x_{i+1}), \dots, \alpha^k\beta^r(x_n)]=0, \nonumber \\
&& \hspace{6cm} i=1, 2, \dots, n, \label{alphacentralderivation},
\end{eqnarray}
is called the set of central $(\alpha^k, \beta^r)$-derivations of $L$. \index{derivation!central!multiplicative n-Hom-Lie color algebra}
\end{definition}

It is easy to see that
$$ZDer(L)\subseteq Der(L)\subseteq QDer(L)\subseteq GDer(L)\subseteq \widetilde{End}(L).$$

\begin{proposition}
 Let$(L, [\cdot, \dots, \cdot], \varepsilon, \alpha, \beta)$ be a regular $n$-BiHom-Lie color algebra .
\begin{enumerate}
 \item [1)] ${GDer}(L), {QDer}(L)$, ${C}(L)$ are color BiHom-subalgebras of
 $(\widetilde{End}(L), [\cdot, \cdot], \omega, \Omega)$ i.e. :
\begin{enumerate}
\item[1a)] $\omega({GDer}(L))\subseteq {GDer}(L)$, $\Omega({GDer}(L))\subseteq {GDer}(L)$
and $[{GDer}(L), {GDer}(L)]\subseteq {GDer}(L)$.
\item[2b)] $\omega({QDer}(L))\subseteq {QDer}(L)$, $\Omega({QDer}(L))\subseteq {QDer}(L)$ and
$[{QDer}(L), {QDer}(L)]\subseteq {QDer}(L)$.
\item[3c)] $\omega({C}(L))\subseteq {C}(L)$, $\Omega({C}(L))\subseteq {C}(L)$ and
 $[{C}(L), {C}(L)]\subseteq {C}(L)$.
\end{enumerate}
\item [2)] ${ZDer}(L)$ is a color BiHom-ideal of ${Der}(L)$ i.e. : \\
$\omega({ZDer}(L))\subseteq {ZDer}(L)$, $\Omega({ZDer}(L))\subseteq {ZDer}(L)$ and
$[{ZDer}(L), {Der}(L)]\subseteq {ZDer}(L)$.
\end{enumerate}
\end{proposition}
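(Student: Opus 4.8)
The plan is to verify, for each of $GDer(L)$, $QDer(L)$, $C(L)$ and $ZDer(L)$, the three closure properties defining a color BiHom-subalgebra of $(\widetilde{End}(L),[\cdot,\cdot],\varepsilon,\omega,\Omega)$ (resp.\ a BiHom-ideal of $Der(L)$): stability under $\omega=(\cdot)\circ\alpha$, stability under $\Omega=(\cdot)\circ\beta$, and stability under the $\varepsilon$-commutator $[D,D']=DD'-\varepsilon(d,d')D'D$. Note first that the inclusions $ZDer(L)\subseteq Der(L)\subseteq QDer(L)\subseteq GDer(L)\subseteq\widetilde{End}(L)$ already recorded, together with the preceding Proposition that $(\widetilde{End}(L),[\cdot,\cdot],\varepsilon,\omega,\Omega)$ is a BiHom-Lie color algebra, make all the brackets below well defined. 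Since $L$ is regular, $\alpha$ and $\beta$ are commuting even automorphisms; hence they are multiplicative and commute with every map $D$, $D^{(i)}$ occurring below. Consequently the stability under $\omega$ and $\Omega$ is uniform and routine: if $D\in GDer_{(\alpha^k,\beta^r)}(L)$ is witnessed by the $(n{+}1)$-tuple $(D=D^{(0)},D',\dots,D^{(n)})$, then substituting $\alpha(x_i)$ for $x_i$ in the defining identity and pushing $\alpha$ through every bracket by multiplicativity shows that $D\circ\alpha\in GDer_{(\alpha^{k+1},\beta^r)}(L)$ is witnessed by $(D^{(0)}\circ\alpha,\dots,D^{(n)}\circ\alpha)$, and $D\circ\alpha$ still commutes with $\alpha,\beta$. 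The same substitution works verbatim for $\Omega$ and for $QDer$, $C$, $ZDer$, each time raising $k$, resp.\ $r$, by one.

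The substantive step is the bracket closure of $GDer(L)$. Given $D\in GDer_{(\alpha^k,\beta^r)}(L)$ of degree $d$ witnessed by $(D^{(i)})_{0\le i\le n}$ and $E\in GDer_{(\alpha^l,\beta^s)}(L)$ of degree $e$ witnessed by $(E^{(i)})_{0\le i\le n}$, I claim $[D,E]\in GDer_{(\alpha^{k+l},\beta^{r+s})}(L)$ is witnessed by $([D^{(i)},E^{(i)}])_{0\le i\le n}$. To prove it, I expand $[D^{(n)},E^{(n)}]([x_1,\dots,x_n])=D^{(n)}E^{(n)}([x_1,\dots,x_n])-\varepsilon(d,e)E^{(n)}D^{(n)}([x_1,\dots,x_n])$, insert the defining identity of each outer map, and use the commutation of the $D^{(i)},E^{(j)}$ with $\alpha,\beta$ to collect twists. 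The resulting double sum splits into diagonal terms (both $D^{(i-1)}$ and $E^{(i-1)}$ acting in the $i$-th slot), which by $\varepsilon(d,X_i)\varepsilon(e,X_i)=\varepsilon(d+e,X_i)$ reassemble into $\sum_i\varepsilon(d+e,X_i)[\alpha^{k+l}\beta^{r+s}(x_1),\dots,[D^{(i-1)},E^{(i-1)}](x_i),\dots,\alpha^{k+l}\beta^{r+s}(x_n)]$, and off-diagonal terms, which cancel in pairs: the term with $D^{(j-1)}$ in slot $j$ and $E^{(i-1)}$ in slot $i$ coming from $D^{(n)}E^{(n)}$ cancels the matching term from $\varepsilon(d,e)E^{(n)}D^{(n)}$, the cancellation being exactly the one in the proof of the Lemma that $[D,D']\in Der_{(\alpha^{k+l},\beta^{r+s})}(L)$, after replacing $D,D'$ by the indexed families and using $\varepsilon(d,e)\varepsilon(e,d)=1$ and $\varepsilon(a,b+c)=\varepsilon(a,b)\varepsilon(a,c)$.

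The same computation, now with a single associated map in place of a family, gives $[D,E]\in QDer_{(\alpha^{k+l},\beta^{r+s})}(L)$ with associated map $[D',E']$ (the maps associated to $D,E$); the diagonal terms produce the required identity and the off-diagonal terms again cancel. For the centroid the computation collapses faster: applying the centroid identity of $D'$, then that of $D$, to $[D,D']([x_1,\dots,x_n])=DD'([x_1,\dots,x_n])-\varepsilon(d,d')D'D([x_1,\dots,x_n])$ and collapsing the two bicharacter factors yields $[D,D']([x_1,\dots,x_n])=\varepsilon(d+d',X_i)[\alpha^{k+l}\beta^{r+s}(x_1),\dots,[D,D'](x_i),\dots,\alpha^{k+l}\beta^{r+s}(x_n)]$ for each $i$, so $[C(L),C(L)]\subseteq C(L)$.

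Finally, for part (2): $ZDer(L)\subseteq Der(L)$ and its stability under $\omega,\Omega$ are already covered. For $[ZDer(L),Der(L)]\subseteq ZDer(L)$, take $D\in ZDer_{(\alpha^k,\beta^r)}(L)$ and $D'\in Der_{(\alpha^l,\beta^s)}(L)$; the Lemma gives $[D,D']\in Der_{(\alpha^{k+l},\beta^{r+s})}(L)$, so only the two vanishing conditions of $ZDer$ remain. For the first, $D'([x_1,\dots,x_n])$ is a sum of brackets, each annihilated by $D$ since $D\in ZDer$, whence $DD'([x_1,\dots,x_n])=0$; and $D([x_1,\dots,x_n])=0$ gives $D'D([x_1,\dots,x_n])=0$, so $[D,D']([x_1,\dots,x_n])=0$. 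For the slot condition, write $[D,D'](x_i)=DD'(x_i)-\varepsilon(d,d')D'D(x_i)$: the bracket with $DD'(x_i)$ in the $i$-th slot and the appropriate $\alpha^{k+l}\beta^{r+s}$-images elsewhere is directly a $ZDer$-vanishing expression for $D$ evaluated at $\alpha^l\beta^s(x_i)$, hence $0$; and the bracket with $D'D(x_i)$ in the $i$-th slot is obtained by expanding $D'$ (a derivation) applied to the bracket $[\alpha^k\beta^r(x_1),\dots,D(x_i),\dots,\alpha^k\beta^r(x_n)]$, which is $0$ because $D\in ZDer$, and every term of that expansion except the $i$-th is again a $ZDer$-vanishing expression for $D$, so the $i$-th term, i.e.\ the bracket with $D'D(x_i)$, vanishes. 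Hence $[D,D']$ annihilates every bracket and is killed in every slot, so $[D,D']\in ZDer_{(\alpha^{k+l},\beta^{r+s})}(L)$.

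The main obstacle is the off-diagonal cancellation in the $GDer$/$QDer$ bracket computation: one must track the powers of $\alpha,\beta$ accumulating as the maps are pushed past one another and past the $n$-bracket, and the accompanying bicharacter bookkeeping, carefully enough to see that the off-diagonal terms pair up and cancel exactly as in the already-proved Lemma; everything else reduces to the substitution trick and the bicharacter axioms.
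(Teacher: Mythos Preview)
Your proposal is correct and follows essentially the same approach as the paper: stability under $\omega,\Omega$ by substitution/multiplicativity, bracket closure of $GDer$ (and hence $QDer$, $C$) via the diagonal/off-diagonal splitting with the witness tuple $([D^{(i)},E^{(i)}])_i$, and the $ZDer$ ideal property by direct verification. The only difference is cosmetic: the paper writes out the off-diagonal cancellation explicitly rather than invoking the earlier Lemma, and conversely leaves the $ZDer$ verification to the reader where you spell it out.
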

\begin{proof}
1a) Let us prove that if $D\in {GDer}(L)$, then $\omega(D)\in {GDer}(L)$.
 For any $x_1,\dots,x_n\in\mathcal{H}(L)$,
\begin{eqnarray}
 &&(\omega(D^{(n)}))([x_1, \dots, x_n])=(D^{(n)}\circ\alpha)([x_1, \dots, x_i, \dots, x_n]) \nonumber \\
&&=D^{(n)}([\alpha(x_1), \dots, \alpha(x_i), \dots, \alpha(x_n)])\nonumber\\
&&=\sum_{i=1}^n\varepsilon(d, X_{i})[\alpha^{k+1}\beta^r(x_1), \dots, \alpha^{k+1}\beta^r(x_{i-1}), D^{(i-1)}\alpha(x_i),
\alpha^{k+1}\beta^r(x_{i+1}), \dots,
\alpha^{k+1}\beta^r(x_n)]\nonumber\\
&&=\sum_{i=1}^n\varepsilon(d, X_{i})[\alpha^{k+1}\beta^r(x_1), \dots, \alpha^{k+1}\beta^r(x_{i-1}), ( D^{(i-1)}\circ\alpha)(x_i), \nonumber \\
&& \hspace{6cm} \alpha^{k+1}\beta^r(x_{i+1}), \dots,\alpha^{k+1}\beta^r(x_n)]\nonumber\\
&&=\sum_{i=1}^n\varepsilon(d, X_{i})[\alpha^{k+1}\beta^r(x_1), \dots, \alpha^{k+1}\beta^r(x_{i-1}), \omega(D^{(i-1)})(x_i),
\alpha^{k+1}\beta^r(x_{i+1}), \dots,
\alpha^{k+1}\beta^r(x_n)]\nonumber.
\end{eqnarray}
This means that $\omega(D)$ is an $(\alpha^{k+1}, \beta^r)$-derivation i.e. $\omega(D)\in {GDer}(L)$.\\
Similarly, we prove that $\Omega(D)$ is an $(\alpha^{r}, \beta^{r+1})$-derivation.\\
Now let $D_1\in {GDer}_{(\alpha^{k}, \beta^r)}(L)$ and $D_2\in {GDer}_{(\alpha^{l}, \beta^s)}(L)$, we have
\begin{eqnarray}
 &&(D_2^{(n)}D_1^{(n)})([x_1, \dots, x_n])=D_2^{(n)}(D_1^{(n)}([x_1, \dots, x_n]))=\nonumber\\
&&=\sum_{i=1}^n\varepsilon(d_1, X_{i})D_2^{(n)}([\alpha^{k}\beta^r(x_1), \dots, \alpha^{k}\beta^r(x_{i-1}), D_1^{(i-1)}(x_i),
 \alpha^{k}\beta^r(x_{i+1}), \dots,
\alpha^{k}\beta^r(x_n)])\nonumber\\
&&=\sum_{i=2}^n\sum_{j<i}^n\varepsilon(d_1, X_{i})\varepsilon(d_2, X_{j})
[\alpha^{k+l}\beta^{r+s}(x_1), \dots, D_2^{(j-1)}\alpha^{k}\beta^{r}(x_j),\dots, \alpha^{k+l}\beta^{r+s}(x_{i-1}),
D_1^{(i-1)}\alpha^{l}\beta^{s}(x_i),
\nonumber \\
&& \hspace{7cm}
\alpha^{k+l}\beta^{r+s}(x_{i+1}), \dots,
\alpha^{k+l}\beta^{r+s}(x_n)]\nonumber\\
&&+\sum_{i=1}^n\varepsilon(d_1+d_2, X_{i})
[\alpha^{k+l}\beta^{r+s}(x_1), \dots,  \alpha^{k+l}\beta^{r+s}(x_{i-1}), D_2^{(i-1)}D_1^{(i-1)}(x_i), \nonumber \\
&& \hspace{7cm}
\alpha^{k+l}\beta^{r+s}(x_{i+1}), \dots,
\alpha^{k+l}\beta^{r+s}(x_n)]\nonumber\\
&&+\sum_{i=1}^n\sum_{j>i}^n\varepsilon(d_1, X_{i})\varepsilon(d_2, d_1+X_{j})
[\alpha^{k+l}\beta^{r+s}(x_1), ,\dots, \alpha^{k+l}\beta^{r+s}(x_{i-1}), D_1^{(i-1)}\alpha^{l}\beta^s(x_i), \nonumber \\
&& \hspace{5cm}
\alpha^{k+l}\beta^{r+s}(x_{i+1}), \dots, D_2^{(j-1)}\alpha^{k}\beta^{r}(x_j), \dots,
\alpha^{k+l}\beta^{r+s}(x_n)]\nonumber.
\end{eqnarray}
It follows that
\begin{eqnarray}
 &&([D_1^{(n)}, D_2^{(n)}])([x_1, \dots, x_n])=(D_1^{(n)}D_2^{(n)}-\varepsilon(d_1, d_2)D_2^{(n)}D_1^{(n)})([x_1, \dots, x_n])=\nonumber\\
&&=\sum_{i=1}^n\varepsilon(d_1+d_2, X_{i})
[\alpha^{k+l}\beta^{r+s}(x_1), \dots,  \alpha^{k+l}\beta^{r+s}(x_{i-1}),
\nonumber \\
&& \hspace{1.5cm} (D_1^{(i-1)}D_2^{(i-1)}-\varepsilon(d_1, d_2)D_2^{(i-1)}D_1^{(i-1)})(x_i), \alpha^{k+l}\beta^{r+s}(x_{i+1}), \dots,
\alpha^{k+l}\beta^{r+s}(x_n)]\nonumber\\
&&=\sum_{i=1}^n\varepsilon(d_1+d_2, X_{i})
[\alpha^{k+l}\beta^{r+s}(x_1), \dots,  \alpha^{k+l}\beta^{r+s}(x_{i-1}),
\nonumber \\
&& \hspace{1.5cm}
[D_1^{(i-1)}, D_2^{(i-1)}](x_i), \alpha^{k+l}\beta^{r+s}(x_{i+1}), \dots,
\alpha^{k+l}\beta^{r+s}(x_n)]\nonumber.
\end{eqnarray}
Thus we obtain that $[D_1, D_2]\in {GDer}_{(\alpha^{k+l}, \beta^{r+s})}(L)$.
\begin{enumerate}
\item[1b)] That of ${QDer}(L)$ is a color BiHom-subalgebra of $(\widetilde{End} (L), [\cdot, \cdot], \omega, \Omega)$ is done in the similar way.
\item[1c)]  Let $D_1\in {C}_{(\alpha^{k}, \beta^r)}(L)$ and $D_2\in {C}_{(\alpha^{l}, \beta^s)}(L)$. Then
\begin{eqnarray}
&& \omega(D_1)([x_1, x_2, \dots, x_n])=\alpha D_1([x_1, x_2, \dots, x_n])\nonumber\\
&&=\varepsilon(d_1, X_i)\alpha([\alpha^k\beta^r(x_1), \alpha^k\beta^r(x_2), \dots, D_1(x_i), \dots, \alpha^k\beta^r(x_n])\nonumber\\
&&=\varepsilon(d_1, X_i)[\alpha^{k+1}\beta^r(x_1), \alpha^{k+1}\beta^r(x_2), \dots, \alpha D_1(x_i), \dots, \alpha^{k+1}\beta^r(x_n]\nonumber.
\end{eqnarray}
Thus $\omega(D)\in {C}_{(\alpha^{k+1}, \beta^r)}(L)$. In the same way, $\Omega(D)\in {C}_{(\alpha^{k}, \beta^{r+1})}(L)$.\\
 Moreover,
\begin{eqnarray}
&& [D_1, D_2]([x_1, \dots, x_n])=D_1D_2([x_1, \dots, x_n])-
\varepsilon(d_1, d_2)D_2D_1([x_1, \dots, x_n])\nonumber\\
&&=\varepsilon(d_2, X_i)D_1[\beta^s\alpha^l(x_1), \beta^s\alpha^l(x_2), \dots, D_2(x_i), \dots, \beta^s\alpha^l(x_n)]\nonumber\\
&&\quad-\varepsilon(d_1, d_2)\varepsilon(d_1, X_i)D_2[\beta^s\alpha^l(x_1), \beta^s\alpha^l(x_2), \dots, D_1(x_i), \dots, \beta^s\alpha^l(x_n)]\nonumber\\
&&=\varepsilon(d_1+d_2, X_i)[\alpha^{k+l}\beta^{r+s}(x_1), \alpha^{k+l}\beta^{r+s}(x_2), \dots,D_1 D_2(x_i), \dots, \alpha^{k+l}\beta^{r+s}(x_n)]\nonumber\\
&&\quad-\varepsilon(d_1+d_2, X_i)[\alpha^{k+l}\beta^{r+s}(x_1), \alpha^{k+l}\beta^{r+s}(x_2), \varepsilon(d_1, d_2)\dots,D_2 D_1(x_i), \dots, \alpha^{k+l}\beta^{r+s}(x_n)]\nonumber\\
&&=\varepsilon(d_1+d_2, X_i)[\alpha^{k+l}\beta^{r+s}(x_1), \alpha^{k+l}\beta^{r+s}(x_2), \dots, [D_1 D_2](x_i), \dots, \alpha^{k+l}\beta^{r+s}(x_n)]\nonumber.
\end{eqnarray}
So, $[D_1, D_2]\in {C}_{(\alpha^{k+l}, \beta^{r+s})}(L)$ and finally $[D_1, D_2]\in {C}(L)$.
\item[2)] By the same method as previously one can show that $\omega(D)\in {ZDer}_{(\alpha^{k+1}, \beta^r)}(L)$,\\
 $\Omega(D)\in {ZDer}_{(\alpha^k, \beta^{r+1})}(L)$ and
 $[D_1, D_2]\in {ZDer}_{(\alpha^{k+l}, \beta^{r+s})}(L)$, where $D_1\in {ZDer}_{(\alpha^{k}, \beta^{r})}(L)$
and $D_2\in {Der}_{(\alpha^{l}, \beta^{s})}(L)$.
\end{enumerate}
\end{proof}

\begin{lemma}\label{lm}
 Let$(L, [\cdot, \dots, \cdot], \varepsilon, \alpha, \beta)$ be a multiplicative $n$-BiHom-Lie color algebra . Then
\begin{enumerate}
\item [1)] The $\varepsilon$-commutator of two elements of quasicentroid is a quasiderivation i.e.
 $$[QC(L), QC(L)]\subseteq QDer(L).$$
\item [2)] $QDer(L)+QC(L)\subseteq GDer(L)$.
\end{enumerate}
\end{lemma}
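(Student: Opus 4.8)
The plan is, in each part, to exhibit the auxiliary maps required by the pertinent definition and then verify its defining identity by a direct computation. In both parts it suffices to argue for homogeneous elements of fixed $\Gamma$-degree and fixed twist bidegree, since $QC(L)$, $QDer(L)$ and $GDer(L)$ are direct sums of their homogeneous components.

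\emph{Part 2.} I would fix $D_1\in QDer_{(\alpha^{k},\beta^{r})}(L)$ of degree $d$, with associated endomorphism $D_1'$, and $D_2\in QC_{(\alpha^{k},\beta^{r})}(L)$ of degree $d$; a general element of $QDer(L)+QC(L)$ is a finite sum of such, once one notes that a quasicentroid map $D_2$ is itself a generalized derivation with witnessing tuple $(D_2,-D_2,0,\dots,0,0)$ (the quasicentroid identity makes the two surviving terms cancel). The claim is that $D_1+D_2$ is a generalized $(\alpha^{k},\beta^{r})$-derivation with witnessing tuple
$$\bigl(D^{(0)},D^{(1)},D^{(2)},\dots,D^{(n-1)},D^{(n)}\bigr)=\bigl(D_1+D_2,\;D_1-D_2,\;D_1,\;\dots,\;D_1,\;D_1'\bigr).$$
Each entry has degree $d$ and commutes with $\alpha$ and $\beta$ (being built from $D_1$, $D_2$, $D_1'$), so only the identity in the definition of $GDer$ needs checking. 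Substituting the tuple into the right-hand side and separating the $D_1$-contribution, the $D_1$-part equals $D_1'([x_1,\dots,x_n])$ by the definition of $QDer$; the $D_2$-part is the slot-$1$ term (coefficient $+1$, from $D^{(0)}$) minus the slot-$2$ term (coefficient $-1$, from $D^{(1)}$), the remaining coefficients vanishing, and these two terms are equal by the defining identity of $QC_{(\alpha^{k},\beta^{r})}(L)$, hence cancel. Thus the right-hand side is $D_1'([x_1,\dots,x_n])=D^{(n)}([x_1,\dots,x_n])$, which is exactly the required identity, so $D_1+D_2\in GDer(L)$. This part is essentially formal once the tuple is written down.

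\emph{Part 1.} I would fix $D_1\in QC_{(\alpha^{k_1},\beta^{r_1})}(L)$ of degree $d_1$ and $D_2\in QC_{(\alpha^{k_2},\beta^{r_2})}(L)$ of degree $d_2$, put $\Theta=\alpha^{k_1+k_2}\beta^{r_1+r_2}$, and show that $[D_1,D_2]=D_1D_2-\varepsilon(d_1,d_2)D_2D_1$ lies in $QDer_{(\alpha^{k_1+k_2},\beta^{r_1+r_2})}(L)$ with the zero map as associated endomorphism. Commutation of $[D_1,D_2]$ with $\alpha,\beta$ is immediate; the substance is the identity
$$\sum_{i=1}^{n}\varepsilon(d_1+d_2,X_i)\bigl[\Theta(x_1),\dots,[D_1,D_2](x_i),\dots,\Theta(x_n)\bigr]=0.$$
To prove it I would expand $[D_1,D_2](x_i)$ and, in each bracket, factor $\Theta=(\alpha^{k_1}\beta^{r_1})\circ(\alpha^{k_2}\beta^{r_2})=(\alpha^{k_2}\beta^{r_2})\circ(\alpha^{k_1}\beta^{r_1})$. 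Since $D_1$ commutes with $\alpha^{k_2}\beta^{r_2}$ and $D_2$ with $\alpha^{k_1}\beta^{r_1}$, the quasicentroid identity for $D_1$ moves $D_1$ freely among the slots and that for $D_2$ moves $D_2$; performing these moves one rewrites $\sum_i\varepsilon(d_1+d_2,X_i)[\Theta(x_1),\dots,D_1D_2(x_i),\dots,\Theta(x_n)]$ and the analogous sum with $D_2D_1$ each into the same normal form (the product operator placed in, say, the first slot), and finds the first to be $\varepsilon(d_1,d_2)$ times the second, so that the alternating combination vanishes. Hence $[D_1,D_2]\in QDer(L)$.

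The step I expect to be the main obstacle is the bicharacter bookkeeping in Part 1: each use of a quasicentroid identity and each interchange of $D_1$ or $D_2$ with an $x$-argument or with the other operator contributes an $\varepsilon$-factor, and these must be combined --- via $\varepsilon(a+b,c)=\varepsilon(a,c)\varepsilon(b,c)$, $\varepsilon(a,b+c)=\varepsilon(a,b)\varepsilon(a,c)$, $\varepsilon(a,b)\varepsilon(b,a)=1$ and the degree shift $\overline{D_j(x)}=d_j+\overline{x}$ --- to exactly the factor $\varepsilon(d_1,d_2)$ that forces the $D_1D_2$- and $D_2D_1$-sums to agree. Part 2 involves no comparable difficulty.
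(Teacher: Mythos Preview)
Your proposal is correct and matches the paper's approach: in Part 2 you exhibit the identical witnessing tuple $(D_1+D_2,\,D_1-D_2,\,D_1,\dots,D_1,\,D_1')$, and in Part 1 you, like the paper, shuffle $D_1$ and $D_2$ among the slots via the quasicentroid identities to show that the quasiderivation sum for $[D_1,D_2]$ vanishes with associated map $D'=0$. The only cosmetic difference is that the paper carries out the Part 1 computation termwise (showing each summand is individually zero) rather than at the level of the full sum.
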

\begin{proof}
 For any $x_1, x_2, \dots, x_n\in\mathcal{H}(L)$,
\begin{enumerate}
 \item [1)] let $D_1\in QC_{(\alpha^{k}, \beta^r)}(L)$ and $D_2\in QC_{(\alpha^{l}, \beta^s)}(L)$. We have, on the one hand
\begin{eqnarray}
&&[D_1D_2(x_1), \alpha^{k+l}\beta^{r+s}(x_2), \dots, \alpha^{k+l}\beta^{r+s}(x_n)]\nonumber\\
&&=\varepsilon(D_1, D_2+X_i)[D_2(\beta^{r}\alpha^{k}(x_1)), \alpha^{k+l}\beta^{r+s}(x_2), \dots, D_1(\beta^l\alpha^{s}(x_i)), \dots, \alpha^{k+l}\beta^{r+s}(x_n)]\nonumber\\
&&=\varepsilon(D_1, D_2+X_i)\varepsilon(D_2, X_i)[\alpha^{k+l}\beta^{r+s}(x_1), \dots, D_2D_1(x_i), \dots, \alpha^{k+l}\beta^{r+s}(x_n)]\nonumber\\
&&=\varepsilon(D_1, D_2)\varepsilon(D_1+D_2, X_i)[\alpha^{k+l}\beta^{r+s}(x_1), \dots, D_2D_1(x_i), \dots, \alpha^{k+l}\beta^{r+s}(x_n)]\nonumber.
\end{eqnarray}
On the other hand,
\begin{eqnarray}
 &&[D_1D_2(x_1), \alpha^{k+l}\beta^{r+s}(x_2), \dots, \alpha^{k+l}\beta^{r+s}(x_n)]=\nonumber\\
&&=\varepsilon(D_1, D_2+x_1)[D_2(\beta^r\alpha^{k}(x_1)), D_1(\beta^l\alpha^{s}(x_2)), \dots, \alpha^{k+l}\beta^{r+s}(x_i), \dots, \alpha^{k+l}\beta^{r+s}(x_n)]\nonumber\\
&&=\varepsilon(D_1, D_2+x_1)\varepsilon(D_2, D_1+X_i)
\nonumber \\
&& \hspace{3cm}
[\alpha^{k+l}\beta^{r+s}(x_1), D_1(\beta^l\alpha^{s}(x_2)), \dots, D_2(\beta^r\alpha^{k}(x_i)), \dots, \alpha^{k+l}\beta^{r+s}(x_n)]\nonumber\\
&&=\varepsilon(D_1, x_1)\varepsilon(D_2, X_i)\varepsilon(x_1, D_1)
\nonumber \\
&&
[D_1(\beta^l\alpha^{s}(x_1)), \alpha^{k+l}\beta^{r+s}(x_2), \dots, D_2(\beta^r\alpha^{k}(x_i)), \dots, \alpha^{k+l}\beta^{r+s}(x_n)]\nonumber\\
&&=\varepsilon(D_2, X_i)\varepsilon(D_1, X_i)[\alpha^{k+l}\beta^{r+s}(x_1), \dots, D_1D_2(x_i), \dots, \alpha^{k+l}\beta^{r+s}(x_n)]\nonumber\\
&&=\varepsilon(D_1+D_2, X_i)[\alpha^{k+l}\beta^{r+s}(x_1), \dots, D_1D_2(x_i), \dots, \alpha^{k+l}\beta^{r+s}(x_n)],\nonumber
\end{eqnarray}
and so
\begin{eqnarray}
&&\varepsilon(D_1+D_2, X_i)[\alpha^{k+l}\beta^{r+s}(x_1), \dots, [D_1, D_2](x_i), \dots, \alpha^{k+l}\beta^{r+s}(x_n)]=\nonumber\\
&&=\varepsilon(D_1+D_2, X_i)\Big([\alpha^{k+l}\beta^{r+s}(x_1), \dots, D_1D_2(x_i), \dots, \alpha^{k+l}\beta^{r+s}(x_n)]\nonumber\\
&&\quad-\varepsilon(D_1, D_2)[\alpha^{k+l}\beta^{r+s}(x_1), \dots, D_2D_1(x_i), \dots, \alpha^{k+l}\beta^{r+s}(x_n)]\Big)=0.\nonumber
\end{eqnarray}
It follows that
 \begin{eqnarray}
  \sum_{i=1}^n\varepsilon(D_1+D_2, X_i)[\alpha^{k+l}\beta^{r+s}(x_1), \dots, [D_1, D_2](x_i), \dots, \alpha^{k+l}\beta^{r+s}(x_n)]=0\nonumber.
 \end{eqnarray}
Therefore $D'\equiv 0$, and $[D_1, D_2]\in QDer(L)$.
\item [2)] let $D_1\in QDer_{(\alpha^k, \beta^r)}(L)$ and $D_2\in QC_{(\alpha^l, \beta^s)}(L)$ with $|D_1|=|D_2|$. Then there exists $D'_1\in End(L)$ such that
\begin{eqnarray}
&& D'_1([x_1, \dots, x_n])
=\sum_{i=1}^n \varepsilon(D_1, X_i)[\beta^r\alpha^{k}(x_1), \dots, D_1(x_i), \dots, \beta^r\alpha^{k}(x_n)]\nonumber\\
&&=[D_1(x_1), \beta^r\alpha^{k}(x_2), \dots, \beta^r\alpha^{k}(x_n)]
+\varepsilon(D_1, x_1)[\beta^r\alpha^{k}(x_1), D_1(x_2), \dots, \beta^r\alpha^{k}(x_n)]\nonumber\\
&& \qquad +\sum_{i=3}^n\varepsilon(D_1, X_i)[\beta^r\alpha^{k}(x_1), \dots, D_1(x_i), \dots, \beta^r\alpha^{k}(x_n)]\nonumber\\
&&=[(D_1+D_2)(x_1), \beta^r\alpha^{k}(x_2), \dots, \beta^r\alpha^{k}(x_n)]-[D_2(x_1), \beta^r\alpha^{k}(x_2), \dots, \beta^r\alpha^{k}(x_n)]\nonumber\\
&& \qquad +\varepsilon(D_1, x_1)[\beta^r\alpha^{k}(x_1), D_1(x_2), \dots, \beta^r\alpha^{k}(x_n)]
\nonumber \\
&& \qquad + \sum_{i=3}^n\varepsilon(D_1, X_i)[\beta^r\alpha^{k}(x_1), \dots, D_1(x_i), \dots, \beta^r\alpha^{k}(x_n)]\nonumber\\
&&=[(D_1+D_2)(x_1), \beta^r\alpha^{k}(x_2), \dots, \beta^r\alpha^{k}(x_n)]-\varepsilon(D_2, x_1)[\beta^r\alpha^{k}(x_1), D_2(x_2), \dots, \beta^r\alpha^{k}(x_n)]\nonumber\\
&& \quad +\varepsilon(D_1, x_1)[\beta^r\alpha^{k}(x_1), D_1(x_2), \dots, \beta^r\alpha^{k}(x_n)]
\nonumber \\
&& \qquad + \sum_{i=3}^n\varepsilon(D_1, X_i)[\beta^r\alpha^{k}(x_1), \dots, D_1(x_i), \dots, \beta^r\alpha^{k}(x_n)]\nonumber\\
&&=[(D_1+D_2)(x_1), \beta^r\alpha^{k}(x_2), \dots, \beta^r\alpha^{k}(x_n)]
\nonumber\\
&& \qquad
+\varepsilon(D_2, x_1)[\beta^r\alpha^{k}(x_1), (D_1-D_2)(x_2), \dots, \beta^r\alpha^{k}(x_n)]
\nonumber\\
&& \qquad +\sum_{i=3}^n\varepsilon(D_1, X_i)[\beta^r\alpha^{k}(x_1), \dots, D_1(x_i), \dots, \beta^r\alpha^{k}(x_n)]\nonumber.
\end{eqnarray}
\end{enumerate}
The conclusion follows by taking
$$D^{(n)}=D'_1,\quad D=D_1+D_2,\quad D'=D_1-D_2,\quad D^{(i)}=D_1,\quad 2\leq i\leq n-1.$$
This proved that $D_1+D_2\in GDe(L)$.
\end{proof}


\begin{proposition}
 Let $(L, [\cdot, \dots, \cdot], \varepsilon, \alpha, \beta)$ be a multiplicative $n$-BiHom-Lie color algebra  such that $\alpha$ and $\beta$ be
surjective mappings, then
$[C(L), QC(L)]\subseteq Hom(L, Z(L))$. Moreover, if $Z(L)=\{0\}$, then $[C(L), QC(L)]=\{0\}$.
\end{proposition}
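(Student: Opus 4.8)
The plan is to reduce the inclusion to a single pointwise identity and then exploit surjectivity of $\alpha^{k+l}\beta^{r+s}$; the computation is parallel to the one in Lemma~\ref{lm}. By bilinearity of the bracket it suffices to treat homogeneous generators, so I fix $D_1\in C_{(\alpha^k,\beta^r)}(L)$ of degree $d_1$ and $D_2\in QC_{(\alpha^l,\beta^s)}(L)$ of degree $d_2$, and I set $d=d_1+d_2$. Choosing any $i$ with $2\le i\le n$, I will show that
$$[[D_1,D_2](x_1),\alpha^{k+l}\beta^{r+s}(x_2),\dots,\alpha^{k+l}\beta^{r+s}(x_n)]=0\qquad\text{for all }x_1,\dots,x_n\in\mathcal{H}(L).$$
Since $\alpha$ and $\beta$ are surjective, so is $\alpha^{k+l}\beta^{r+s}$, so every tuple of homogeneous elements of $L$ occurs as $\bigl(\alpha^{k+l}\beta^{r+s}(x_2),\dots,\alpha^{k+l}\beta^{r+s}(x_n)\bigr)$; by the definition of the centre this forces $[D_1,D_2](x_1)\in Z(L)$ for every $x_1$, i.e. $[D_1,D_2]\in\mathrm{Hom}(L,Z(L))$.

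First I would record the two ingredients: from Definition~\ref{cet} (combining the cases $i=1$ and general $i$), for all homogeneous $u_1,\dots,u_n$,
$$[D_1(u_1),\alpha^k\beta^r(u_2),\dots,\alpha^k\beta^r(u_n)]=D_1([u_1,\dots,u_n])=\varepsilon(d_1,U_i)[\alpha^k\beta^r(u_1),\dots,D_1(u_i),\dots,\alpha^k\beta^r(u_n)],$$
and the quasicentroid identity for $D_2$ (of degree $(\alpha^l,\beta^s)$). Then, writing $\alpha^{k+l}\beta^{r+s}(x_j)=\alpha^k\beta^r(\alpha^l\beta^s(x_j))$ throughout and using $D_1\alpha=\alpha D_1$, $D_1\beta=\beta D_1$, I would compute the two terms of $[[D_1,D_2](x_1),\alpha^{k+l}\beta^{r+s}(x_2),\dots]$ separately. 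For the first, pulling $D_1$ out of the outer bracket, applying the quasicentroid identity for $D_2$ to bring $D_2$ into the $i$-th slot, and re-applying the centroid identity for $D_1$ in that same slot yields
$$[D_1D_2(x_1),\alpha^{k+l}\beta^{r+s}(x_2),\dots]=\varepsilon(d,X_i)\,[\alpha^{k+l}\beta^{r+s}(x_1),\dots,D_1D_2(x_i),\dots,\alpha^{k+l}\beta^{r+s}(x_n)],$$
with $X_i=\sum_{j<i}x_j$. For the second term, the same manipulations applied with $D_1(x_1)$ sitting in the first slot produce one extra bicharacter factor, since $\overline{D_1(x_1)}=d_1+\overline{x_1}$; the outcome is the same expression multiplied by $\varepsilon(d_1,d_2)^{-1}$:
$$[D_2D_1(x_1),\alpha^{k+l}\beta^{r+s}(x_2),\dots]=\varepsilon(d_1,d_2)^{-1}\,\varepsilon(d,X_i)\,[\alpha^{k+l}\beta^{r+s}(x_1),\dots,D_1D_2(x_i),\dots,\alpha^{k+l}\beta^{r+s}(x_n)].$$

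Since $[D_1,D_2]=D_1\circ D_2-\varepsilon(d_1,d_2)D_2\circ D_1$ and the bracket is linear in its first argument, subtracting gives
$$[[D_1,D_2](x_1),\alpha^{k+l}\beta^{r+s}(x_2),\dots]=\bigl(1-\varepsilon(d_1,d_2)\varepsilon(d_1,d_2)^{-1}\bigr)\varepsilon(d,X_i)[\dots,D_1D_2(x_i),\dots]=0,$$
using $\varepsilon(a,b)\varepsilon(b,a)=1$. This establishes the pointwise identity, hence $[C(L),QC(L)]\subseteq\mathrm{Hom}(L,Z(L))$ by the surjectivity remark above. For the final assertion: if $Z(L)=\{0\}$, then $[D_1,D_2](x_1)=0$ for every $x_1$, so $[D_1,D_2]=0$ on generators and therefore $[C(L),QC(L)]=\{0\}$. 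The main thing to be careful about is the bookkeeping of the bicharacter---in particular that $D_1(x_1)$ carries degree $d_1+\overline{x_1}$, which is exactly what produces the factor $\varepsilon(d_1,d_2)^{-1}$ cancelling the $\varepsilon(d_1,d_2)$ built into the $\varepsilon$-commutator---together with checking that at each stage every bracket still has the shape ``$\alpha^k\beta^r(\cdot)$ (or $\alpha^l\beta^s(\cdot)$) in all but one slot'', so that the (quasi)centroid identities remain applicable.
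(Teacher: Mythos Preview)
Your proof is correct and follows essentially the same route as the paper: both arguments fix homogeneous $D_1\in C_{(\alpha^k,\beta^r)}(L)$ and $D_2\in QC_{(\alpha^l,\beta^s)}(L)$, use surjectivity of $\alpha^{k+l}\beta^{r+s}$ to reduce to showing $[[D_1,D_2](x_1),\alpha^{k+l}\beta^{r+s}(x_2),\dots,\alpha^{k+l}\beta^{r+s}(x_n)]=0$, and establish this by shuttling $D_1$ and $D_2$ through the bracket via the centroid and quasicentroid identities. The only cosmetic difference is that the paper stops at the form $D_1\bigl([D_2(x_1),\alpha^l\beta^s(y_2),\dots]-\varepsilon(d_2,x_1)[\alpha^l\beta^s(x_1),D_2(y_2),\dots]\bigr)=D_1(0)$, whereas you push one step further and rewrite both terms as $\varepsilon(d,X_i)[\dots,D_1D_2(x_i),\dots]$ (the second with an extra $\varepsilon(d_1,d_2)^{-1}$) before subtracting; the bicharacter bookkeeping checks out in either arrangement.
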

\begin{proof}
 Let $D_1\in C_{(\alpha^k, \beta^r)}(L)$,  $D_2\in QC_{(\alpha^l, \beta^s)}(L)$ and $x_1, \dots, x_n\in \mathcal{H}(L)$.
Since $\alpha$ and $\beta$ are surjective, for
any $y'_i\in L$, there exists $y_i\in L$ such that $y'_i=\beta^{r+s}\alpha^{k+l}(y_i), i= 2, \dots, n$. Thus
\begin{eqnarray}
&& [[D_1, D_2](x_1), y'_2, \dots, y'_n]=\nonumber\\
&&=[[D_1, D_2](x_1), \alpha^{k+l}\beta^{r+s}(y_2), \dots, \alpha^{k+l}\beta^{r+s}(y_n)]\nonumber\\
&&=[D_1D_2(x_1), \alpha^{k+l}\beta^{r+s}(y_2), \dots, \alpha^{k+l}\beta^{r+s}(y_n)]
\nonumber\\
&& \qquad
-\varepsilon(d_1, d_2)[D_2D_1(x_1), \alpha^{k+l}\beta^{r+s}(y_2), \dots, \alpha^{k+l}\beta^{r+s}(y_n)]\nonumber\\
&&=D_1([D_2(x_1), \beta^l\alpha^{s}(y_2), \dots, \beta^l\alpha^{s}(y_n)])
\nonumber\\
&& \qquad
-\varepsilon(d_1, d_2)\varepsilon(d_2,x_1+ d_1)[D_1\beta^l\alpha^{s}(x_1), D_2\beta^r\alpha^{k}(y_2), \dots, \alpha^{k+l}\beta^{r+s}(y_n)]\nonumber\\
&&=D_1([D_2(x_1), \beta^l\alpha^{s}(y_2), \dots, \beta^l\alpha^{s}(y_n)])
\nonumber\\
&& \qquad
-\varepsilon(d_2, x_1)D_1[\beta^l\alpha^{s}(x_1), D_2(y_2), \dots, \beta^l\alpha^{s}(y_n)]\nonumber\\
&&=D_1\Big([D_2(x_1), \beta^s\alpha^{l}(y_2), \dots, \beta^s\alpha^{l}(y_n)]
\nonumber\\
&& \qquad
-\varepsilon(d_2, x_1)[\beta^l\alpha^{s}(x_1), \beta^l\alpha^{s}D_2(y_2), \dots, \beta^l\alpha^{s}(y_n)]\Big)\nonumber\\
&&=D_1\Big([D_2(x_1), \beta^l\alpha^{s}(y_2), \dots, \beta^l\alpha^{s}(y_n)]
-[D_2(x_1), \beta^l\alpha^{s}(y_2), \dots, \beta^l\alpha^{s}(y_n)]\Big)=0.\nonumber
\end{eqnarray}
Hence, $[D_1, D_2](x_1)\in Z(L)$, and $[D_1, D_2]\in Hom(L, Z(L))$. Furthermore, if $Z(L)=\{0\}$, we know that $[C(L), QC(L)]=\{0\}$.
 \end{proof}

\begin{proposition}
 Let$(L, [\cdot, \dots, \cdot], \varepsilon, \alpha, \beta)$ is a multiplicative $n$-BiHom-Lie color algebra  with surjective twisting $\alpha$
  and $H$ be a graded Hom-subalgebra of $L$ wich is invariant by $\alpha$ and $\beta$. Then
\begin{enumerate}
 \item [i)] $Z_L(H)$ is invariant under $C(L)$.
\item [ii)] Every perfect color Hom-ideal of $L$ is invariant under $C(L)$.
\end{enumerate}
\end{proposition}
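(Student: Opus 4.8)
The plan is to reduce both parts to the defining identity of the $(\alpha^k,\beta^r)$-centroid (Definition~\ref{cet}), read off with a convenient choice of the free index $i$, and then to use the hypotheses on $H$, respectively on the ideal, to keep the bracket produced by that identity inside the relevant graded subspace. Since $C(L)=\bigoplus_{k,r\ge 0}C_{(\alpha^k,\beta^r)}(L)$ acts linearly and both $Z_L(H)$ and every graded Hom-ideal are graded subspaces, it suffices to fix $k,r\ge 0$ and a homogeneous $D\in C_{(\alpha^k,\beta^r)}(L)$ of degree $d$, and to show $D(Z_L(H))\subseteq Z_L(H)$ and $D(I)\subseteq I$.

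For i), recall that $Z_L(H)=\{x\in L:\ [x,h_2,\dots,h_n]=0\ \text{for all}\ h_j\in H\}$. Fix a homogeneous $x\in Z_L(H)$ and arbitrary homogeneous $h_2,\dots,h_n\in H$. Because $\alpha$ and $\beta$ are surjective and $H$ is invariant under both, the composite $\alpha^k\beta^r$ restricts to a surjection of $H$ onto $H$, so $h_j=\alpha^k\beta^r(h_j')$ for suitable $h_j'\in H$. Applying Definition~\ref{cet} with $i=1$, where $\varepsilon(d,X_1)=\varepsilon(d,0)=1$, we obtain
\[
[D(x),h_2,\dots,h_n]=[D(x),\alpha^k\beta^r(h_2'),\dots,\alpha^k\beta^r(h_n')]=D\bigl([x,h_2',\dots,h_n']\bigr)=D(0)=0,
\]
because $x\in Z_L(H)$ and each $h_j'\in H$. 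Hence $D(x)\in Z_L(H)$, and since $x$ and $D$ were arbitrary, $Z_L(H)$ is invariant under $C(L)$.

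For ii), let $I$ be a perfect graded Hom-ideal, so that $\alpha(I)\subseteq I$, $\beta(I)\subseteq I$, $[I,L,\dots,L]\subseteq I$, and $I$ is spanned by brackets $[u_1,\dots,u_n]$ with all $u_j\in I$ (any iterated brackets being treated in the same way). It is enough to show $D([u_1,\dots,u_n])\in I$ for homogeneous $u_1,\dots,u_n\in I$. Applying Definition~\ref{cet} this time with $i=n$,
\[
D([u_1,\dots,u_n])=\varepsilon(d,X_n)\,[\alpha^k\beta^r(u_1),\dots,\alpha^k\beta^r(u_{n-1}),D(u_n)].
\]
Since $I$ is $\alpha$- and $\beta$-invariant we have $\alpha^k\beta^r(u_1)\in I$, while the remaining $n-1$ entries lie in $L$; therefore the bracket on the right belongs to $[I,L,\dots,L]\subseteq I$, and the nonzero scalar $\varepsilon(d,X_n)$ does not affect this. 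Thus $D(I)\subseteq I$.

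The one genuinely delicate point is in part i): the centroid identity only yields vanishing of $[D(x),\alpha^k\beta^r(h_2),\dots,\alpha^k\beta^r(h_n)]$, so to recover the condition that defines $Z_L(H)$ one must strip off the twistings $\alpha^k\beta^r$ on the $h_j$, and this is precisely where the surjectivity of the twistings $\alpha$ and $\beta$, combined with the $\alpha,\beta$-invariance of $H$ — giving $\alpha^k\beta^r(H)=H$ — is indispensable. Part ii) needs no surjectivity hypothesis: choosing the slot $i=n$ places the twisted element $\alpha^k\beta^r(u_1)$, which already lies in $I$, into the first position, so the ideal property of $I$ alone closes the argument.
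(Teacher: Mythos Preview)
Your proof is correct and follows essentially the same approach as the paper's: apply the centroid identity to pull $D$ through the bracket, then invoke surjectivity of the twistings (for i)) or the $\alpha,\beta$-invariance and ideal property (for ii)) to land in the target subspace. The only cosmetic differences are the choice of slot in part ii) (you take $i=n$, the paper takes $i=1$) and your explicit use of surjectivity of $\beta$ as well as $\alpha$; the statement only mentions $\alpha$, but the paper's own proof tacitly uses both, so this is an omission in the hypothesis rather than in your argument.
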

\begin{proof}
\begin{enumerate}
 \item [i)]
 For any $\varphi\in C(L)$ and $x\in Z_L(H)$,  we have
\begin{eqnarray}
 0=\varphi([x, H, L, \dots, L])=[\varphi(x), \beta^r\alpha^k(H), \beta^r\alpha^k(L), \dots, \beta^r\alpha^k(L)]=[\varphi(x), H, L, \dots, L].\nonumber
\end{eqnarray}
Therefore $\varphi(x)\in Z_L(H)$, which implies that $Z_L(H)$ is invariant under $C(L)$.
\item [ii)] Let $H$ be a perfect color Hom-ideal of $L$. Then $H^1=H$, and so for any $x\in H$ there exist $x_1^i, x_2^i, \dots, x_n^i\in H$ with
$0<i<\infty$ such that $x=\sum_i[x_1^i, x_2^i, \dots, x_n^i]$. If $\varphi\in C(L)$, then
\begin{eqnarray*}
\varphi(x) &=& \varphi(\sum_i[x_1^i, x_2^i, \dots, x_n^i])=\sum_i\varphi([x_1^i, x_2^i, \dots, x_n^i])
\nonumber \\
&=&
\sum_i[\varphi(x_1^i), \beta^r\alpha^k(x_2^i), \dots, \beta^r\alpha^k(x_n^i)]\in H.
\end{eqnarray*}
This shows that $H$ is invariant under $C(L)$.
\end{enumerate}
\end{proof}

\begin{proposition}\label{BakayokoSilvestrov:dc1}
 If the characteristic of $\mathbb{K}$ is $0$ or not a factor of $n-1$. Then
$$ZDer(L)=C(L)\cap Der(L).$$
\end{proposition}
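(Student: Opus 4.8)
The plan is to prove the two inclusions separately; only the inclusion $C(L)\cap Der(L)\subseteq ZDer(L)$ will actually use the hypothesis on the characteristic. Throughout we may pass to homogeneous components and assume $D$ is of color degree $d$, and — when it is needed — that the twisting exponents match, i.e.\ $D\in C_{(\alpha^k,\beta^r)}(L)\cap Der_{(\alpha^k,\beta^r)}(L)$; the general statement then follows by summing over $k,r$. We will use repeatedly that $\alpha$ and $\beta$ commute, so $\alpha^k\beta^r=\beta^r\alpha^k$, and that the values of $\varepsilon$ lie in $\mathbb{K}^{\ast}$ and may therefore be cancelled.

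First I would treat $ZDer(L)\subseteq C(L)\cap Der(L)$. If $D\in ZDer_{(\alpha^k,\beta^r)}(L)$, then by definition $D\alpha=\alpha D$, $D\beta=\beta D$, and for all $x_1,\dots,x_n\in\mathcal{H}(L)$ and every $i$ one has both $D([x_1,\dots,x_n])=\varepsilon(d,X_i)[\alpha^k\beta^r(x_1),\dots,D(x_i),\dots,\alpha^k\beta^r(x_n)]$ and that this common value equals $0$. The first equality is precisely the defining relation of $C_{(\alpha^k,\beta^r)}(L)$, so $D\in C(L)$. For membership in $Der(L)$, note that each summand $\varepsilon(d,X_i)[\beta^r\alpha^k(x_1),\dots,D(x_i),\dots,\beta^r\alpha^k(x_n)]$ vanishes, so the right-hand side of the derivation identity is a sum of zeros, which equals $0=D([x_1,\dots,x_n])$. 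Hence $D\in Der(L)$. This inclusion requires no assumption on the characteristic.

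For the reverse inclusion, let $D\in C_{(\alpha^k,\beta^r)}(L)\cap Der_{(\alpha^k,\beta^r)}(L)$ of degree $d$. The centroid condition says that for each fixed $i$,
$$\varepsilon(d,X_i)\,[\beta^r\alpha^k(x_1),\dots,D(x_i),\dots,\beta^r\alpha^k(x_n)]=D([x_1,\dots,x_n]),$$
so every one of the $n$ summands occurring in the derivation identity equals $D([x_1,\dots,x_n])$; adding them gives $D([x_1,\dots,x_n])=n\,D([x_1,\dots,x_n])$, that is, $(n-1)\,D([x_1,\dots,x_n])=0$. The hypothesis that the characteristic of $\mathbb{K}$ is $0$ or does not divide $n-1$ means exactly that $n-1$ is a nonzero element of the field $\mathbb{K}$, hence invertible, so $D([x_1,\dots,x_n])=0$ for all $x_i$. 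Feeding this back into the centroid relation yields $\varepsilon(d,X_i)[\alpha^k\beta^r(x_1),\dots,D(x_i),\dots,\alpha^k\beta^r(x_n)]=0$, and cancelling the unit $\varepsilon(d,X_i)$ shows $[\alpha^k\beta^r(x_1),\dots,D(x_i),\dots,\alpha^k\beta^r(x_n)]=0$ for every $i$. Together with $D\alpha=\alpha D$ and $D\beta=\beta D$ this is exactly the definition of $ZDer_{(\alpha^k,\beta^r)}(L)$, so $D\in ZDer(L)$.

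There is no real obstacle here: the one substantive step is the counting argument that collapses the $n$-term derivation sum into $n$ copies of a single value, forcing $(n-1)D([x_1,\dots,x_n])=0$; the only things to watch are the bookkeeping of the twisting maps (handled by $\alpha^k\beta^r=\beta^r\alpha^k$) and of the $\varepsilon$-signs (harmless, since they are invertible scalars that cancel). The characteristic hypothesis enters solely to guarantee that $n-1$ is invertible in $\mathbb{K}$.
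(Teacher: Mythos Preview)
Your proof is correct and follows essentially the same approach as the paper: both directions are argued exactly as you do, with the key step being that the centroid relation makes each of the $n$ summands in the derivation identity equal to $D([x_1,\dots,x_n])$, whence $(n-1)D([x_1,\dots,x_n])=0$ and the characteristic hypothesis forces vanishing. Your write-up is in fact slightly more careful than the paper's, in that you explicitly note the need to work at matched exponents $(\alpha^k,\beta^r)$ and the invertibility of the $\varepsilon$-factors.
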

\begin{proof}
 If $\varphi\in C(L)\cap Der(L)$, then by (\ref{alphaderivation})  we have
$$\varphi([x_1, \dots, x_n])=\sum_{i=1}^n\varepsilon(d, X_i)[\beta^r\alpha^k(x_1), \dots, \varphi(x_i), \dots, \beta^r\alpha^k(x_n)],$$
and by (\ref{alphacentroid}), for $i=1, 2, \dots, n$,
$$\varepsilon(d, X_i)[\beta^r\alpha^k(x_1), \dots, \varphi(x_i), \dots, \beta^r\alpha^k(x_n)]=\varphi([x_1, \dots, x_n]).$$
Thus
$$\varphi([x_1, \dots, x_n])=n \varphi([x_1, \dots, x_n])$$
The characteristic of  $\mathbb{K}$ being $0$ or not a factor of $n-1$, we have
$$0=\varphi([x_1, \dots, x_n])=\varepsilon(d, X_i)[\beta^r\alpha^k(x_1), \dots, \varphi(x_i), \dots, \beta^r\alpha^k(x_n)], i=1, 2, \dots, n.$$
Which means that $\varphi\in ZDer(L)$.\\
Conversly, let $\varphi\in ZDer(L)$, Then by (\ref{alphacentralderivation}),
$$\varphi([x_1, \dots, x_n])=\varepsilon(d, X_i)[\alpha^k(x_1), \dots, \varphi(x_i), \dots, \alpha^k(x_n)]=0, 1\leq i\leq n$$
and thus $\varphi\in C(L)\cap Der(L)$. Therefore $ZDer(L)=C(L)\cap Der(L)$.
 \end{proof}

\begin{proposition}\label{BakayokoSilvestrov:dc2}
 Let $L$ be an $n$-BiHom-Lie color algebra . For any $D\in Der(L)$ and $\varphi\in C(L)$
\begin{enumerate}
 \item[1)] $Der(L)$ is contained in the normalizer of $C(L)$ in $\widetilde{End(L)}$ i.e.
$$[Der(L), C(L)]\subseteq C(L).$$
\item [2)] $QDer(L)$ is contained in the normalizer of $QC(L)$ in $\widetilde{End(L)}$ i.e.
$$[QDer(L), QC(L)]\subseteq QC(L).$$
\end{enumerate}
\end{proposition}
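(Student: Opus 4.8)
The plan is to prove both inclusions by the same two‑step recipe. First, note that if two linear maps each commute with $\alpha$ and $\beta$, then so does their $\varepsilon$-commutator; hence for a commutator $D\varphi-\varepsilon(d,d')\varphi D$ the first defining condition of a (quasi)centroid element is automatic, and the only thing to verify is the bracket identity with the composed twist $\alpha^{k+l}\beta^{r+s}$. Second, one expands $D\varphi-\varepsilon(d,d')\varphi D$ on a homogeneous product $[x_1,\dots,x_n]$ and checks that all ``off‑diagonal'' terms cancel.

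For part 1), fix $D\in Der_{(\alpha^k,\beta^r)}(L)$ of degree $d$ and $\varphi\in C_{(\alpha^l,\beta^s)}(L)$ of degree $d'$, and fix an index $j\in\{1,\dots,n\}$. I would first rewrite $\varphi([x_1,\dots,x_n])$ by the centroid identity applied at the $j$-th slot, then apply $D$ using the derivation rule; this produces one ``diagonal'' term carrying $D\varphi(x_j)$ and $n-1$ ``off‑diagonal'' terms in which $D$ has landed on some slot $i\neq j$ while $\varphi$ still sits on slot $j$. In each off‑diagonal term I would move $\varphi$ from slot $j$ onto slot $i$ — again by the centroid identity — turning the term into a scalar multiple of $[\alpha^{k+l}\beta^{r+s}(x_1),\dots,\varphi D(x_i),\dots,\alpha^{k+l}\beta^{r+s}(x_n)]$. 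A parallel, shorter computation of $\varphi D([x_1,\dots,x_n])$ — apply the derivation rule first, then slide $\varphi$ onto the slot carrying $D(x_i)$ — gives exactly $\sum_{i=1}^n\varepsilon(d+d',X_i)[\alpha^{k+l}\beta^{r+s}(x_1),\dots,\varphi D(x_i),\dots]$. Forming $D\varphi-\varepsilon(d,d')\varphi D$ then kills every off‑diagonal term, leaving $\varepsilon(d+d',X_j)[\alpha^{k+l}\beta^{r+s}(x_1),\dots,(D\varphi-\varepsilon(d,d')\varphi D)(x_j),\dots,\alpha^{k+l}\beta^{r+s}(x_n)]$, which is the centroid identity for $[D,\varphi]$ with twist $(\alpha^{k+l},\beta^{r+s})$. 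Since $j$ was arbitrary, $[D,\varphi]\in C(L)$.

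For part 2), fix $D\in QDer_{(\alpha^k,\beta^r)}(L)$ with associated endomorphism $D'$ (degree $d$), fix $E\in QC_{(\alpha^l,\beta^s)}(L)$ (degree $e$) and an index $j$. The key observation is that the two elements $[E(x_1),\alpha^l\beta^s(x_2),\dots,\alpha^l\beta^s(x_n)]$ and $\varepsilon(e,X_j)[\alpha^l\beta^s(x_1),\dots,E(x_j),\dots,\alpha^l\beta^s(x_n)]$ are equal by the quasicentroid identity for $E$, so $D'$ sends them to equal elements. I would expand both images by the defining identity of the quasiderivation: on the left the diagonal term is $[DE(x_1),\alpha^{k+l}\beta^{r+s}(x_2),\dots]$ and on the right it is $\varepsilon(e,X_j)\varepsilon(d,X_j)[\alpha^{k+l}\beta^{r+s}(x_1),\dots,DE(x_j),\dots]$, while every off‑diagonal term, after the now‑familiar move of pushing $E$ onto the slot carrying $D(x_i)$ via the quasicentroid identity once more, becomes $\varepsilon(d,e)\varepsilon(d+e,X_i)[\alpha^{k+l}\beta^{r+s}(x_1),\dots,ED(x_i),\dots]$ on both sides. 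Equating the two expansions and cancelling the common off‑diagonal sums (over $i=2,\dots,n$ on the left, over $i\neq j$ on the right) leaves precisely $[DE(x_1),\dots]-\varepsilon(d,e)[ED(x_1),\dots]=\varepsilon(d+e,X_j)\big([\alpha^{k+l}\beta^{r+s}(x_1),\dots,DE(x_j),\dots]-\varepsilon(d,e)[\alpha^{k+l}\beta^{r+s}(x_1),\dots,ED(x_j),\dots]\big)$, i.e. the quasicentroid identity for $[D,E]=DE-\varepsilon(d,e)ED$; hence $[D,E]\in QC(L)$.

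The part I expect to be the main obstacle is the bicharacter bookkeeping rather than the algebra of brackets. When $D$ hits a slot that slot's degree is raised by $d$ (by $e$ for $E$), so the ``sum of degrees of the entries preceding slot $i$'' that appears in every twisting identity gets shifted, and one must repeatedly use the axioms $\varepsilon(a,b)\varepsilon(b,a)=1$ and $\varepsilon(a,b+c)=\varepsilon(a,b)\varepsilon(a,c)$ (hence $\varepsilon(a,-b)=\varepsilon(b,a)$) to see that the scalars produced on the two sides actually coincide; in part 2) the prefactor $\varepsilon(e,X_j)$ carried along from the quasicentroid identity of $E$ is exactly what absorbs the apparent mismatch. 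A minor but essential point is that $D$, $D'$, $\varphi$ and $E$ all commute with $\alpha$ and $\beta$, which is what permits freely regrouping $\alpha^k\beta^r$, $\alpha^l\beta^s$ and $\alpha^{k+l}\beta^{r+s}$ and sliding them past the operators.
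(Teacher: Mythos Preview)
Your proposal is correct and follows essentially the same route as the paper. For part 1) the paper does exactly what you describe---apply the centroid identity for $\varphi$, then the derivation rule for $D$, slide $\varphi$ back through the bracket, and recognise the off-diagonal sum as $\varepsilon(d,d')\varphi D$ acting on the product---except that the paper fixes the distinguished slot to be $j=1$ whereas you carry an arbitrary $j$; your version is therefore slightly more complete with respect to the definition of $C_{(\alpha^{k+l},\beta^{r+s})}(L)$, which requires the identity at every slot. For part 2) the paper gives no details beyond ``proved by a similar method''; your idea of applying $D'$ to both sides of the quasicentroid identity for $E$ and then sliding $E$ onto the slot hit by $D$ is a correct and efficient way to produce the $QC$ identity for $[D,E]$ with twist $(\alpha^{k+l},\beta^{r+s})$, and your remark that the mismatch between the index sets $\{2,\dots,n\}$ and $\{1,\dots,n\}\setminus\{j\}$ leaves exactly the four terms forming the desired identity is the key observation.
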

\begin{proof}
1) For any $D\in Der(L), \varphi\in C(L)$ and $x_1, x_2, \dots, x_n\in\mathcal{H}(L)$,
 \begin{eqnarray}
&& D\varphi([x_1, \dots, x_n])=D([\varphi(x_1), \beta^{r}\alpha^{k}(x_2), \dots, \beta^{r}\alpha^{k}(x_i), \dots, \beta^{r}\alpha^{k}(x_n)])\nonumber\\
&&=[D\varphi(x_1), \beta^{r+s}\alpha^{k+l}(x_2), \dots, \beta^{r+s}\alpha^{k+l}(x_i), \dots, \beta^{r+s}\alpha^{k+l}(x_n)]\nonumber\\
&& \quad +\sum_{i=2}^n\varepsilon(d, \varphi+ X_i)[\beta^{l}\alpha^{s}\varphi(x_1), \beta^{r+s}\alpha^{k+l}(x_2), \dots, \beta^{r}\alpha^{k}D(x_i), \dots, \beta^{r+s}\alpha^{k+l}(x_n)]\nonumber\\
&&=[D\varphi(x_1), \beta^{r+s}\alpha^{k+l}(x_2), \dots, \beta^{r+s}\alpha^{k+l}(x_i), \dots, \beta^{r+s}\alpha^{k+l}(x_n)]\nonumber\\
&& \quad +\sum_{i=2}^n\varepsilon(d,\varphi+ X_i)\varepsilon(\varphi, X_i)([\beta^{r+s}\alpha^{k+l}(x_1), \beta^{r+s}\alpha^{k+l}(x_2), \dots, \varphi D(x_i),
\dots, \beta^{r+s}\alpha^{k+l}(x_n)]\nonumber\\
&=& [D\varphi(x_1), \beta^{r+s}\alpha^{k+l}(x_2), \dots, \beta^{r+s}\alpha^{k+l}(x_i), \dots, \beta^{r+s}\alpha^{k+l}(x_n)]\nonumber\\
&& \quad +\varepsilon(d,\varphi)\sum_{i=2}^n\varepsilon(d+\varphi, X_i)([\beta^{r+s}\alpha^{k+l}(x_1), \beta^{r+s}\alpha^{k+l}(x_2), \dots,
\varphi D(x_i), \dots, \beta^{r+s}\alpha^{k+l}(x_n)]\nonumber\\
&=&[D\varphi(x_1), \beta^{r+s}\alpha^{k+l}(x_2), \dots, \beta^{r+s}\alpha^{k+l}(x_i), \dots, \beta^{r+s}\alpha^{k+l}(x_n)]\nonumber\\
&&\quad +\varepsilon(d,\varphi)\Big(\varphi D[x_1, x_2, \dots, x_i, \dots, x_n]\nonumber\\
&&\quad-[\varphi D(x_1), \beta^{r+s}\alpha^{k+l}(x_2), \dots, \beta^{r+s}\alpha^{k+l}(x_i), \dots, \beta^{r+s}\alpha^{k+l}(x_n)]   \Big)\nonumber.
 \end{eqnarray}
Then we get
\begin{eqnarray}
&& (D\varphi-\varepsilon(d, \varphi)\varphi D)([x_1, \dots, x_n])
 \nonumber \\
&&\qquad  =[(D\varphi-\varepsilon(d, \varphi)\varphi D)(x_1), \dots, \beta^{r+s}\alpha^{k+l}(x_2),
 \dots, \beta^{r+s}\alpha^{k+l}(x_i), \dots, \beta^{r+s}\alpha^{k+l}(x_n)]\nonumber,
\end{eqnarray}
that is
$[D, \varphi]=D\varphi-\varepsilon(d, \varphi)\varphi D\in C(L)$. \\
2) It is proved by using a similar method.
 \end{proof}

\begin{proposition}
 Let $L$ be an $n$-BiHom-Lie color algebra . For any $D\in Der(L)$ and $\varphi\in C(L)$
\begin{enumerate}
\item [1)] $D\varphi$ is contained in $C(L)$ if and only if $\varphi D$ is a central derivation of $L$ i.e. :
$$Der(L)\cdot C(L)\subset C(L)\Longleftrightarrow C(L)\cdot Der(L)\subset ZDer(L).$$
\item [2)] $D\varphi$ is a derivation of $L$ if and only if $[D, \varphi]$ is a central derivation of $L$ i.e. :
$$Der(L)\cdot C(L)\subset Der(L)\Longleftrightarrow [Der(L), C(L)]\subset ZDer(L).$$
\end{enumerate}
\end{proposition}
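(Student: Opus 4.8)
The plan is to derive both equivalences from three facts, two of which are already available in the text. First, Proposition~\ref{BakayokoSilvestrov:dc2} gives, for every $D\in Der(L)$ and $\varphi\in C(L)$, that $[D,\varphi]=D\varphi-\varepsilon(d,\varphi)\,\varphi D\in C(L)$. Second, Proposition~\ref{BakayokoSilvestrov:dc1} identifies $ZDer(L)=C(L)\cap Der(L)$, so the hypothesis there --- that the characteristic of $\mathbb{K}$ is $0$ or not a factor of $n-1$ --- is tacitly assumed here as well. Third, I would first prove a short auxiliary fact: if $\varphi\in C_{(\alpha^{l},\beta^{s})}(L)$ and $D\in Der_{(\alpha^{k},\beta^{r})}(L)$, then $\varphi\circ D\in Der_{(\alpha^{k+l},\beta^{r+s})}(L)$.

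To establish this auxiliary fact I would apply $\varphi$ to the defining identity of $D$ and then, in the $i$-th summand, invoke the centroid property of $\varphi$ to move $\varphi$ onto its $i$-th argument $D(x_i)$. Because $\alpha$ and $\beta$ are even, the degree accumulated by the first $i-1$ arguments is unchanged, so this move contributes the sign $\varepsilon(|\varphi|,X_i)$; multiplying by the sign $\varepsilon(d,X_i)$ already present yields $\varepsilon(d+|\varphi|,X_i)$ and gives
$$\varphi D([x_1,\dots,x_n])=\sum_{i=1}^{n}\varepsilon(d+|\varphi|,X_i)\,[\alpha^{k+l}\beta^{r+s}(x_1),\dots,\varphi D(x_i),\dots,\alpha^{k+l}\beta^{r+s}(x_n)].$$
Since $\varphi D$ visibly commutes with $\alpha$ and $\beta$, this is exactly the statement that $\varphi D$ is an $(\alpha^{k+l},\beta^{r+s})$-derivation; by bilinearity it suffices to argue for homogeneous $\varphi$ and $D$. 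This sign bookkeeping is the only step requiring real care.

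With these ingredients Part~1) is formal. For the implication $Der(L)\cdot C(L)\subseteq C(L)\Rightarrow C(L)\cdot Der(L)\subseteq ZDer(L)$: from $D\varphi\in C(L)$ and $[D,\varphi]\in C(L)$ one gets $\varepsilon(d,\varphi)\,\varphi D\in C(L)$, hence $\varphi D\in C(L)$; combined with $\varphi D\in Der(L)$ from the auxiliary fact, $\varphi D\in C(L)\cap Der(L)=ZDer(L)$. Conversely, from $\varphi D\in ZDer(L)\subseteq C(L)$ and $[D,\varphi]\in C(L)$ one gets $D\varphi=[D,\varphi]+\varepsilon(d,\varphi)\,\varphi D\in C(L)$.

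For Part~2) I use that $\varphi D\in Der(L)$ always (the auxiliary fact) and $[D,\varphi]\in C(L)$ always. Then $[D,\varphi]=D\varphi-\varepsilon(d,\varphi)\,\varphi D$ lies in $Der(L)$ if and only if $D\varphi$ does, since $Der(L)$ is a subspace containing $\varphi D$; and because $[D,\varphi]\in C(L)$ is automatic, $[D,\varphi]\in Der(L)$ if and only if $[D,\varphi]\in C(L)\cap Der(L)=ZDer(L)$. Chaining these two equivalences gives $D\varphi\in Der(L)\iff[D,\varphi]\in ZDer(L)$, as required. No serious obstacle remains beyond the careful tracking of $\varepsilon$-signs in the auxiliary fact.
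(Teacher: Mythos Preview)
Your proof is correct and follows essentially the same route as the paper. The paper's argument invokes Proposition~\ref{BakayokoSilvestrov:dc2} (to get $[D,\varphi]\in C(L)$) and Proposition~\ref{BakayokoSilvestrov:dc1} (to identify $ZDer(L)=C(L)\cap Der(L)$) exactly as you do; your ``auxiliary fact'' that $\varphi D\in Der(L)$ is precisely the content of the proposition immediately following this one in the paper, which the paper's proof appears to use implicitly. Your explicit verification of it, with the $\varepsilon$-sign bookkeeping, is a worthwhile addition and matches the computation given there.
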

\begin{proof}
1) From Proposition \ref{BakayokoSilvestrov:dc2}, $D\varphi$ is an element of $C(L)$ if and only if $\varphi D\in  Der(L)\cap C(L)$.
Thanks to Proposition \ref{BakayokoSilvestrov:dc1}, we get the result. \\
2) The conclusion follows from $1)$, Proposition \ref{BakayokoSilvestrov:dc1} and Proposition \ref{BakayokoSilvestrov:dc2}.
\end{proof}

\begin{proposition}
 Let$(L, [\cdot, \dots, \cdot], \varepsilon, \alpha, \beta)$ be a multiplicative $n$-BiHom-Lie color algebra.
\begin{enumerate}
\item [1)]
If $\varphi\in C(L)$ and $D\in Der(L)$, then $\varphi D$ is a derivation i.e.
$$C(L)\cdot Der(L)\subseteq Der(L).$$
\item [2)] Any element of centroid is a quasiderivation i.e.
 $$C(L)\subseteq QDer(L).$$
\end{enumerate}
\end{proposition}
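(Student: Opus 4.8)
The plan is to handle the two items separately; each is a short computation once the defining identities of $C(L)$ and $Der(L)$ are combined correctly.

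For item $1)$, fix $\varphi\in C_{(\alpha^l,\beta^s)}(L)$ of degree $d_1$ and $D\in Der_{(\alpha^k,\beta^r)}(L)$ of degree $d_2$, and note that $\varphi D$ has degree $d:=d_1+d_2$ and commutes with $\alpha$ and $\beta$, since both $\varphi$ and $D$ do. For homogeneous $x_1,\dots,x_n$, I would first use the defining identity of $D$ together with linearity of $\varphi$ to write
$$\varphi D([x_1,\dots,x_n])=\sum_{i=1}^n\varepsilon(d_2,X_i)\,\varphi\big([\beta^r\alpha^k(x_1),\dots,\beta^r\alpha^k(x_{i-1}),D(x_i),\beta^r\alpha^k(x_{i+1}),\dots,\beta^r\alpha^k(x_n)]\big).$$
Then, for each $i$, I would apply the centroid identity of Definition \ref{cet} to the $i$-th bracket, choosing to place $\varphi$ in the slot already occupied by $D(x_i)$; since the entries in positions $1,\dots,i-1$ are $\beta^r\alpha^k(x_j)$ and $\alpha,\beta$ are even, these have degree $\overline{x_j}$, so the extra sign produced is exactly $\varepsilon(d_1,X_i)$, and multiplicativity gives $\alpha^l\beta^s\circ\beta^r\alpha^k=\alpha^{k+l}\beta^{r+s}$. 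Using $\varepsilon(d_2,X_i)\varepsilon(d_1,X_i)=\varepsilon(d,X_i)$ I obtain
$$\varphi D([x_1,\dots,x_n])=\sum_{i=1}^n\varepsilon(d,X_i)\,[\alpha^{k+l}\beta^{r+s}(x_1),\dots,(\varphi D)(x_i),\dots,\alpha^{k+l}\beta^{r+s}(x_n)],$$
which is exactly the condition $\varphi D\in Der_{(\alpha^{k+l},\beta^{r+s})}(L)\subseteq Der(L)$.

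For item $2)$, let $\varphi\in C_{(\alpha^k,\beta^r)}(L)$ have degree $d$. The point is that, by Definition \ref{cet}, each of the $n$ summands $\varepsilon(d,X_i)[\alpha^k\beta^r(x_1),\dots,\varphi(x_i),\dots,\alpha^k\beta^r(x_n)]$ is itself equal to $\varphi([x_1,\dots,x_n])$, so their sum over $i$ equals $n\,\varphi([x_1,\dots,x_n])$. Hence, setting $D':=n\varphi$ --- which is again an even-commuting endomorphism of degree $d$, and requires only scalar multiplication, not division --- the quasiderivation identity
$$D'([x_1,\dots,x_n])=\sum_{i=1}^n\varepsilon(d,X_i)\,[\alpha^k\beta^r(x_1),\dots,\varphi(x_i),\dots,\alpha^k\beta^r(x_n)]$$
holds together with $D'\alpha=\alpha D'$ and $D'\beta=\beta D'$. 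Therefore $\varphi\in QDer_{(\alpha^k,\beta^r)}(L)\subseteq QDer(L)$.

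I do not anticipate a genuine obstacle; the only delicate point is the $\varepsilon$-sign bookkeeping in item $1)$, namely checking that inserting $\varphi$ into the $i$-th slot of the bracket yields the factor $\varepsilon(d_1,X_i)$ and not something involving $d_2$ --- which holds precisely because $\alpha$ and $\beta$ are even, so $\overline{\beta^r\alpha^k(x_j)}=\overline{x_j}$ --- together with the bookkeeping of twisting exponents via multiplicativity.
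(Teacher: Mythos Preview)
Your proposal is correct and follows essentially the same approach as the paper's own proof: for item 1) you apply the derivation identity of $D$ first and then the centroid identity of $\varphi$ at the $i$-th slot, combining the signs $\varepsilon(d_2,X_i)\varepsilon(d_1,X_i)=\varepsilon(d,X_i)$ and the twisting exponents via multiplicativity; for item 2) you sum the $n$ identical centroid identities and set $D'=n\varphi$. The only difference is notational (the paper writes $d$ and $\varphi$ for the degrees where you write $d_2$ and $d_1$).
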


\begin{proof}
1) For any $x_1, \dots, x_n\in \mathcal{H}(L)$,
\begin{eqnarray}
 \varphi D([x_1, \dots, x_n])&=&\sum_{i=1}^n\varepsilon(d, X_i)\varphi([\beta^r\alpha^k(x_1), \dots, D(x_i), \dots, \beta^r\alpha^k(x_n)])\nonumber\\
&=&\sum_{i=1}^n\varepsilon(d, X_i)\varepsilon(\varphi, X_i)[\alpha^{k+l}\beta^{r+s}(x_1), \dots, \varphi D(x_i), \dots, \alpha^{k+l}\beta^{r+s}(x_n)]\nonumber\\
&=&\sum_{i=1}^n\varepsilon(d+\varphi, X_i)[\alpha^{k+l}\beta^{r+s}(x_1), \dots, \varphi D(x_i), \dots, \alpha^{k+l}\beta^{r+s}(x_n)]\nonumber.
\end{eqnarray}
Thus $\varphi D$ is an $(\alpha^{k+l}, \beta^{r+s})$-derivation of degree $d+\varphi$.

2) Let $D$ be an $(\alpha^{k}, \beta^r)$-centroid, then for any $x_1, \dots, x_n\in \mathcal{H}(L),$
\begin{eqnarray}
 D([x_1, \dots, x_n])=\varepsilon(d, X_i)[\beta^r\alpha^k(x_1), \dots D(x_i), \dots, \beta^r\alpha^k(x_n)], i=1, 2, \dots, n.
\end{eqnarray}
It follows that
\begin{eqnarray}
 \sum_{i=1}^n\varepsilon(d, X_i)[\beta^r\alpha^k(x_1), \dots D(x_i), \dots, \beta^r\alpha^k(x_n)]=nD([x_1, \dots, x_n]).
\end{eqnarray}
It suffises to take $D'=nD$.
\end{proof}

If $A$ is a commutative associative algbebra and $L$ is an $n$-BiHom-Lie color algebra , the $n$-Hom-Lie algebra $A\otimes L$ (Theorem \ref{BakayokoSilvestrov:tp})
 is called \index{tensor product!n-Hom-Lie color algebra} the tensor product $n$-BiHom-Lie color algebra  of $A$ and $L$. For $f\in End(A)$ and $\varphi\in End(L)$ let
$f\otimes \varphi : A\otimes L\rightarrow A\otimes L$ be given by $f\otimes \varphi(a\otimes x)=f(a)\otimes\varphi(x)$, for $a\in A, x\in L$.
Then $f\otimes\varphi\in End(A\otimes L)$.

Recall that if $A$ is a commutative associative algebra, the centroid $C(A)$ of $A$ is by definition
$$C(A)=\{f\in End(A) \mid  f(ab)=f(a)b=af(b), \forall a, b\in A\}.$$
We now state the following
\begin{proposition}
 By the above notation, we have
$$C(A)\otimes C(L)\subseteq C(A\otimes L).$$
\end{proposition}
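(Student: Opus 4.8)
The plan is to take $f \in C(A)$, $\varphi \in C(L)$ and show that $f \otimes \varphi$ satisfies the defining property of $C(A \otimes L)$ from Definition \ref{cet}, namely the centroid condition for the $n$-BiHom-Lie color algebra $(A \otimes L, [\cdot,\dots,\cdot]', \varepsilon, \alpha', \beta')$ constructed in Theorem \ref{BakayokoSilvestrov:tp}. So I must verify two things: first, that $f \otimes \varphi$ commutes with $\alpha'$ and $\beta'$; second, that for homogeneous elements $a_1 \otimes x_1, \dots, a_n \otimes x_n$ of $A \otimes L$, and for each index $i$,
$$
(f \otimes \varphi)\bigl([a_1 \otimes x_1, \dots, a_n \otimes x_n]'\bigr)
= \varepsilon(\overline{f\otimes\varphi}, X_i)\,\bigl[\alpha'^k\beta'^r(a_1\otimes x_1), \dots, (f\otimes\varphi)(a_i\otimes x_i), \dots, \alpha'^k\beta'^r(a_n\otimes x_n)\bigr]'.
$$

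First I would dispatch the commutation relations: since $\alpha'(a \otimes x) = a \otimes \alpha(x)$ and $\beta'(a \otimes x) = a \otimes \beta(x)$, we have $(f \otimes \varphi)\alpha'(a \otimes x) = f(a) \otimes \varphi\alpha(x) = f(a)\otimes\alpha\varphi(x) = \alpha'(f\otimes\varphi)(a\otimes x)$, using $\varphi\alpha = \alpha\varphi$, which holds because $\varphi \in C(L) \subseteq \widetilde{End}(L)$; the same works for $\beta'$. Next, for the bracket identity, I would expand the left side using the bracket formula of Theorem \ref{BakayokoSilvestrov:tp}:
$$
(f\otimes\varphi)\bigl(a_1\cdots a_n \otimes [x_1,\dots,x_n]\bigr) = f(a_1\cdots a_n) \otimes \varphi([x_1,\dots,x_n]).
$$
Now I apply the centroid property of $f$ on the commutative associative algebra $A$ — which gives $f(a_1 a_2 \cdots a_n) = f(a_1)\,a_2\cdots a_n$ (by an easy induction from $f(ab) = f(a)b$) — and simultaneously the centroid property of $\varphi$ on $L$, which gives $\varphi([x_1,\dots,x_n]) = \varepsilon(\overline\varphi, X_i)[\alpha^k\beta^r(x_1),\dots,\varphi(x_i),\dots,\alpha^k\beta^r(x_n)]$ for the chosen $i$. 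Multiplying these and regrouping the scalars $f(a_1)\,a_2\cdots a_n = \alpha'^k\beta'^r$-factors tensored with the $\alpha^k\beta^r$-twisted entries of $L$ — and noting $\overline{f\otimes\varphi} = \overline\varphi$ since $\varepsilon$ on $A\otimes L$ ignores the $A$-degree — reassembles exactly the right-hand side of the centroid condition for $A \otimes L$.

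The only mild subtlety, and the step I would be most careful about, is the bookkeeping of which factor of $A$ carries $f$: since $A$ is commutative, $f(a_1\cdots a_n) = f(a_i)\,a_1\cdots\widehat{a_i}\cdots a_n$ for any $i$, so I can align the $f$-factor with the $\varphi$-factor at the same slot $i$, ensuring $(f\otimes\varphi)(a_i\otimes x_i) = f(a_i)\otimes\varphi(x_i)$ appears in position $i$ on both sides while all other slots receive $a_j \otimes \alpha^k\beta^r(x_j) = \alpha'^k\beta'^r(a_j\otimes x_j)$. There is no genuine obstacle here — the result is essentially the tensor-product compatibility of centroids — so the proof is a short direct computation; I would present it compactly, perhaps remarking that the case of a general $f\otimes\varphi$ reduces, by bilinearity and the grading convention $\varepsilon(a+x,b+y)=\varepsilon(x,y)$, to the homogeneous case just treated.
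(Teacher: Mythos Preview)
Your proposal is correct and follows essentially the same direct computation as the paper: expand the bracket on $A\otimes L$, apply the centroid properties of $f$ and $\varphi$ separately, and reassemble. You are in fact more thorough than the paper, which omits the verification that $f\otimes\varphi$ commutes with $\alpha'$ and $\beta'$ and does not spell out the commutativity argument allowing $f$ to land in the $i$th slot.
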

\begin{proof}
For any $a_i\in A, x_i\in\mathcal{H}(L), 1\leq i\leq n$, and any $f\in C(A)$ and $\varphi\in C(L),$
\begin{eqnarray}
&& (f\otimes\varphi)[a_1\otimes x_1, \dots, a_n\otimes x_n]=(f\otimes \varphi)(a_1\dots a_n)\otimes[x_1, \dots, x_n]\nonumber\\
&&=f(a_1\dots a_n)\otimes\varphi[x_1, \dots, x_n]\nonumber\\
&&=\varepsilon(\varphi, X_i)a_1\dots f(a_i)\dots a_n\otimes [\beta^r\alpha^{k}(x_1), \dots, \varphi(x_i), \dots, \beta^r\alpha^{k}(x_n)]\nonumber\\
&&=\varepsilon(\varphi, X_i)[a_1\otimes \beta^r\alpha^{k}(x_1)\dots f(a_i)\otimes\varphi(x_i), \dots, a_n\otimes \beta^r\alpha^{k}(x_n)]\nonumber\\
&&=\varepsilon(\varphi, X_i)[\gamma(a_1\otimes x_1)\dots (f\otimes\varphi)( a_i\otimes x_i), \dots, \gamma(a_n\otimes x_n)]\nonumber,
\end{eqnarray}
with $\gamma=Id\otimes\alpha^k\beta^r$. Therefore, $f\otimes\varphi\in C(A\otimes L)$.
\end{proof}

\begin{proposition}
  Let $(L, [\cdot, \dots, \cdot], \varepsilon, \alpha, \beta)$ be a multiplicative $n$-BiHom-Lie color algebra and $t$ an indeterminate.
 Let us define 
$\overline{L}=\{\sum(x\otimes t+y\otimes t^n)| x, y\in L\}, \overline \alpha(L)=\{\sum(\alpha(x)\otimes t+\alpha(y)\otimes t^n)| x, y\in L\}$ and
$\overline\beta(L)=\{\sum(\beta(x)\otimes t+\beta(y)\otimes t^n)| x, y\in L\}$. Then, $(\overline L, [., \dots, .], \varepsilon, \alpha, \beta)$ is a multiplicative 
$n$-BiHom-Lie color algeba with the bracket
$$[x_1\otimes t^{i_1}, x_2\otimes t^{i_2}, \dots, x_n\otimes t^{i_n}]=[x_1, x_2, \dots, x_n]\otimes t^{\sum i_j},$$
for $i_1, \dots, i_n\in\{1, 2, \dots, n\}$. If $k>n$, we let $t^k=0$.
\end{proposition}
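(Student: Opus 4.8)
The plan is to exhibit $\overline L$ as a graded $n$-BiHom-subalgebra of a tensor product $n$-BiHom-Lie color algebra and then read off the conclusion from Theorem~\ref{BakayokoSilvestrov:tp}. Take $A=\mathbb{K}[t]/(t^{n+1})$, a (unital) commutative associative $\mathbb{K}$-algebra, and identify $\overline L$ with the subspace $L\otimes(\mathbb{K}t\oplus\mathbb{K}t^{n})$ of $L\otimes A$. By Theorem~\ref{BakayokoSilvestrov:tp} (with the two tensor factors written in the order $L\otimes A$), $L\otimes A$ is an $n$-BiHom-Lie color algebra with bracket $[x_1\otimes a_1,\dots,x_n\otimes a_n]=[x_1,\dots,x_n]\otimes(a_1\cdots a_n)$, twisting maps $\alpha'=\alpha\otimes\mathrm{id}_A$ and $\beta'=\beta\otimes\mathrm{id}_A$, and the bicharacter $\varepsilon$ carried by the $L$-factor. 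Moreover $L\otimes A$ is multiplicative: using $\alpha([x_1,\dots,x_n])=[\alpha(x_1),\dots,\alpha(x_n)]$ one gets $\alpha'([x_1\otimes a_1,\dots,x_n\otimes a_n])=[\alpha(x_1),\dots,\alpha(x_n)]\otimes(a_1\cdots a_n)=[\alpha'(x_1\otimes a_1),\dots,\alpha'(x_n\otimes a_n)]$, and similarly for $\beta'$.

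Next I would check that $\overline L$ is a graded $n$-BiHom-subalgebra of $L\otimes A$. It is a graded subspace, and the maps $\overline\alpha,\overline\beta$ defined in the statement are simply the restrictions of $\alpha',\beta'$, which preserve it. The only point requiring an argument is closure under the bracket: by multilinearity it suffices to bracket generators $x_j\otimes t^{i_j}$ with $i_j\in\{1,n\}$, and the result equals $[x_1,\dots,x_n]\otimes t^{\,i_1+\cdots+i_n}$. Since every $i_j\ge 1$, the exponent is $\ge n$, with equality precisely when all $i_j=1$; and if some $i_j=n$ then $i_1+\cdots+i_n\ge n+(n-1)=2n-1>n$ (recall $n\ge 2$), so $t^{\,i_1+\cdots+i_n}=0$ in $A$. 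Hence $[\overline L,\dots,\overline L]\subseteq L\otimes t^{n}\subseteq\overline L$, the induced bracket on $\overline L$ is exactly $[x_1\otimes t^{i_1},\dots,x_n\otimes t^{i_n}]=[x_1,\dots,x_n]\otimes t^{\sum i_j}$ with the convention $t^{k}=0$ for $k>n$ — precisely the bracket in the statement — and multiplicativity of $\overline\alpha,\overline\beta$ follows from that of $\alpha',\beta'$ together with this closure.

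Finally, because the commutation relation $\overline\alpha\circ\overline\beta=\overline\beta\circ\overline\alpha$, the $\varepsilon$-$n$-BiHom-skewsymmetry, and the $n$-BiHom-$\varepsilon$-Jacobi identity are identities holding for all homogeneous elements of $L\otimes A$, they hold in particular for all homogeneous elements of the subalgebra $\overline L$; hence $(\overline L,[\cdot,\dots,\cdot],\varepsilon,\overline\alpha,\overline\beta)$ is a multiplicative $n$-BiHom-Lie color algebra, as claimed. The only mildly delicate step is the degree bookkeeping in the exponent $i_1+\cdots+i_n$ — in particular noticing that every nonzero bracket of elements of $\overline L$ lands in $L\otimes t^{n}$, so the bracket is in fact highly degenerate — after which all axioms are inherited from $L\otimes A$ for free. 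One could instead verify the three axioms directly on $\overline L$, but that computation is essentially the same and the subalgebra route is shorter.
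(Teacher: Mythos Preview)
Your argument is correct. The paper's own proof consists of a single line, ``It follows from a straightforward computation,'' i.e.\ it implicitly asks the reader to verify the commutation of the twisting maps, the $\varepsilon$-$n$-BiHom-skewsymmetry, the $n$-BiHom-$\varepsilon$-Jacobi identity, and multiplicativity directly on generators $x_j\otimes t^{i_j}$. You instead realise $\overline L$ as a graded $n$-BiHom-subalgebra of the tensor product $L\otimes\bigl(\mathbb{K}[t]/(t^{n+1})\bigr)$ furnished by Theorem~\ref{BakayokoSilvestrov:tp}, and then inherit all the axioms at once. This is a genuinely different and cleaner route: the exponent bookkeeping $i_1+\cdots+i_n\ge n$ (with equality only when every $i_j=1$) replaces the term-by-term verification of the Jacobi identity, and multiplicativity comes along for free rather than being checked separately. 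Two small remarks: Theorem~\ref{BakayokoSilvestrov:tp} is stated for $A\otimes L$ rather than $L\otimes A$, so a word acknowledging the harmless transposition of tensor factors would be tidy; and the reliance on $n\ge 2$ for the inequality $2n-1>n$ is worth flagging explicitly, as you do, since the statement tacitly assumes an $n$-ary bracket with $n\ge 2$.
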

\begin{proof}
It follows from a straightforward forward computation.
\end{proof}

For the sake of convenience, we write $xt (xt^n)$ instead of $x\otimes t(x\otimes t^n)$. If $U$ is a subspace of $L$ such that 
$L=U\oplus[L, \dots, L]$, then
$$\overline L=Lt+Lt^n=Lt++Ut^n+[L, \dots, L]t^n.$$
Now define a map $\varphi : QDer(L)\rightarrow End(\overline L)$ by
$$\varphi(D)(at+ut^n+bt^n)=D(a)t+D'(b)t^n, $$
where $D\in QDer(L)$, $D'$ is a map related to $D$ by the definition of quasiderivation, $a\in L, u\in U, b\in[L, \dots, L]$. 

\begin{proposition}
 Let $L, \overline L, \varphi$ be as above. Then
\begin{enumerate}
 \item [(1)] $\varphi$ is even ;
\item [(2)] $\varphi$ is injective and $\varphi(D)$ does not depend on the choise of $D'$ ;
\item [(3)] $\varphi(QDer(L))\subseteq Der(\overline L)$.
\end{enumerate}
\end{proposition}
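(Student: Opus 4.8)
The plan is to verify the three assertions in turn, working with $\overline{L}=Lt\oplus Lt^{n}$, its refinement $\overline{L}=Lt\oplus Ut^{n}\oplus[L,\dots,L]t^{n}$ induced by $L=U\oplus[L,\dots,L]$, and with $\overline{\alpha},\overline{\beta}$ acting by $\overline{\alpha}(xt^{i})=\alpha(x)t^{i}$ and $\overline{\beta}(xt^{i})=\beta(x)t^{i}$. Assertion (1) is immediate: if $D\in QDer_{(\alpha^{k},\beta^{r})}(L)$ has degree $d$, then its associated endomorphism $D'$ also has degree $d$, and since the formal symbols $t,t^{n}$ are homogeneous of degree $e$, the map $at+ut^{n}+bt^{n}\mapsto D(a)t+D'(b)t^{n}$ raises degrees by exactly $d$; hence $\varphi$ sends homogeneous elements of $QDer(L)$ to homogeneous elements of $End(\overline{L})$ of the same degree, i.e.\ $\varphi$ is even.

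For (2), I would first note that the quasiderivation identity
$$D'([x_{1},\dots,x_{n}])=\sum_{i=1}^{n}\varepsilon(d,X_{i})[\alpha^{k}\beta^{r}(x_{1}),\dots,D(x_{i}),\dots,\alpha^{k}\beta^{r}(x_{n})]$$
has a right-hand side depending only on $D$, so it determines $D'$ uniquely on the linear span $[L,\dots,L]$ of all brackets (consistency of the right-hand side being guaranteed by the very existence of some $D'$, which is built into $D\in QDer(L)$). Since $\varphi(D)$ uses $D'$ only through its restriction to $[L,\dots,L]t^{n}$, the operator $\varphi(D)$ is independent of the chosen extension $D'$. Injectivity is then read off the $Lt$-summand: $\varphi(D)=0$ forces $D(a)t=\varphi(D)(at)=0$ for all $a\in L$, hence $D=0$.

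For (3), fix $D\in QDer_{(\alpha^{k},\beta^{r})}(L)$; the claim is that $\varphi(D)\in Der_{(\overline{\alpha}^{k},\overline{\beta}^{r})}(\overline{L})$. Commutation of $\varphi(D)$ with $\overline{\alpha}$ and $\overline{\beta}$ reduces, summand by summand, to $D\alpha=\alpha D$, $D\beta=\beta D$ on $Lt$, to $D'\alpha=\alpha D'$, $D'\beta=\beta D'$ together with the $\alpha,\beta$-invariance of $[L,\dots,L]$ on $[L,\dots,L]t^{n}$, and to a triviality on $Ut^{n}$ once $U$ is taken stable under $\alpha$ and $\beta$. For the derivation identity it suffices, by multilinearity, to evaluate both sides on a bracket $[x_{1}t^{i_{1}},\dots,x_{n}t^{i_{n}}]=[x_{1},\dots,x_{n}]\,t^{\,i_{1}+\cdots+i_{n}}$ with each $i_{j}\in\{1,n\}$. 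If some $i_{j}=n$, then $i_{1}+\cdots+i_{n}\ge 2n-1>n$, so the bracket vanishes, and because $\varphi(D)$ preserves (or kills) the $t$-degree of each entry, every term on the right-hand side also carries a power $t^{\ge 2n-1}=0$. If all $i_{j}=1$, the bracket equals $[x_{1},\dots,x_{n}]t^{n}\in[L,\dots,L]t^{n}$, so $\varphi(D)$ sends it to $D'([x_{1},\dots,x_{n}])t^{n}$, which by the quasiderivation identity equals $\sum_{i}\varepsilon(d,X_{i})[\alpha^{k}\beta^{r}(x_{1}),\dots,D(x_{i}),\dots]t^{n}$, and this is precisely $\sum_{i}\varepsilon(d,X_{i})[\overline{\alpha}^{k}\overline{\beta}^{r}(x_{1}t),\dots,\varphi(D)(x_{i}t),\dots,\overline{\alpha}^{k}\overline{\beta}^{r}(x_{n}t)]$. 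Hence $\varphi(D)$ is an $(\overline{\alpha}^{k},\overline{\beta}^{r})$-derivation and $\varphi(QDer(L))\subseteq Der(\overline{L})$.

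The step I expect to require the most care is the compatibility of $\varphi(D)$ with $\overline{\alpha}$ and $\overline{\beta}$ on the $Ut^{n}$-summand, since $\alpha$ and $\beta$ need not preserve the chosen complement $U$: one resolves this either by selecting $U$ invariant under $\alpha$ and $\beta$ from the start, or, failing that, by carrying along the $[L,\dots,L]$-component of $\alpha(u)$ and $\beta(u)$ in the bookkeeping. Everything else is routine manipulation of powers of $t$ together with the already-verified identities for $D$ and $D'$.
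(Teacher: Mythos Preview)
Your proof is correct and follows the same route as the paper's: injectivity is read off the $Lt$-component, well-definedness comes from the fact that the quasiderivation identity determines $D'$ on $[L,\dots,L]$, and the derivation identity for $\varphi(D)$ is checked only on $[x_1t,\dots,x_nt]$ since every other bracket has $t$-degree $\ge 2n-1>n$ and vanishes. The paper's argument is in fact terser than yours: it does not verify the commutation $\varphi(D)\circ\overline{\alpha}=\overline{\alpha}\circ\varphi(D)$ and $\varphi(D)\circ\overline{\beta}=\overline{\beta}\circ\varphi(D)$ at all, so the subtlety you flag on the $Ut^n$-summand (that $\alpha,\beta$ need not preserve $U$) is a point you have identified that the paper itself leaves unaddressed; your proposed resolution---choosing $U$ to be $\alpha,\beta$-stable---is the natural fix.
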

\begin{proof}
 \begin{enumerate}
  \item [(1)] It follows from the definition of $\varphi$.
\item [(2)] Let $D_1, D_2\in GDer(L)$ such that $\varphi(D_1)=\varphi(D_2)$. Then for all $a\in L, b\in[L, \dots, L]$ and $u\in U$ we have
$$\varphi(D_1)(at+ut^n+bt^n)=\varphi(D_2)(at+ut^n+bt^n),$$
which gives, by the definition of $\varphi$,
$$D_1(a)t+D'_1(b)t^n=D_2(a)t+D_2(b)t^n,$$
so, $D_1(a)=D_2(a)$. Hence $D_1=D_2$, and $\varphi$ is injective.\\
Now, suppose that there exists $D''$ such that
$$\varphi(D)(at+ut^n+bt^n)=D(a)t+D''(b)t^n,$$
and 
$$\sum\varepsilon(d, X_i)[\alpha^k\beta^r(x_1), \dots, D(x_i), \dots, \alpha^k\beta^r(x_n)]=D''([x_1, \dots, x_n]),$$
then we have
$$D'([x_1, \dots, x_n])=D''([x_1, \dots, x_n]),$$
thus $D'(b)=D''(b)$. Hence
$$\varphi(D)(at+ut^n+bt^n)=D(a)t+D'(b)t^n=D(a)t+D''(b)t^n,$$
which implies that $\varphi(D)$ is determined only by $D$.
\item [(3)] We have $[x_1t^{i_1}, x_2 t^{i_2}, \dots, x_n t^{i_n}]=[x_1, x_2, \dots, x_n] t^{\sum i_j}=0$ for all $t^{\sum i_j}\geq n+1$.
Thus, to show that $\varphi(D)\in Der(\overline L)$, we only need to check that the following equality holds :
$$\varphi(D)([x_1t^{i_1}, x_2 t^{i_2}, \dots, x_n t^{i_n}])=
\sum \varepsilon(d, X_i)[\overline\alpha^k\overline\beta^r(x_1t^{i_1}), \dots, \varphi(D)(x_it^{i_1}),
 \dots, \overline\alpha^k\overline\beta^r(x_nt^{i_n})].$$
For arbitrary $x_1, \dots, x_n\in L$, we have
\begin{eqnarray}
 \varphi(D)([x_1t, x_2 t, \dots, x_n t])&=&\varphi(D)([x_1, x_2, \dots, x_n]t^n)=D'([x_1, x_2, \dots, x_n])t^n\nonumber\\
&=&\sum \varepsilon(d, X_i)[\alpha^k\beta^r(x_1), \alpha^k\beta^r(x_2), \dots, \alpha^k\beta^r(x_n)]t^n\nonumber\\
&=&\sum \varepsilon(d, X_i)[\alpha^k\beta^r(x_1t), \alpha^k\beta^r(x_2t), \dots, \alpha^k\beta^r(x_nt)]\nonumber\\
&=&\sum \varepsilon(d, X_i)[\overline\alpha^k\overline\beta^r(x_1t), \dots, \varphi(D)(x_it),
 \dots, \overline\alpha^k\overline\beta^r(x_nt)]\nonumber.
\end{eqnarray}
Therefore, for all $D\in QDer(L)$ we have $\varphi(D)\in Der(\overline L)$.
 \end{enumerate}
\end{proof}


\label{lastpage-01}
\end{document}